\crefname{assumption}{assumption}{assumptions}
\crefname{figure}{figure}{figures}
\crefname{equation}{}{}
\crefname{subsection}{section}{sections}
\setlist[enumerate,2]{label=(\alph*),ref=\theenumi(\alph*)}
\setlist[enumerate,3]{label=\roman*.,ref=\theenumii\roman*}
\let\set\relax
\let\abs\relax
\providecommand\given{\:\vert\:}
\DeclarePairedDelimiterXPP\set[1]{}{\{}{\}}{}{
	\renewcommand\given{\nonscript\:\delimsize\vert\nonscript\:\mathopen{}}
	#1}
\DeclarePairedDelimiter\abs{|}{|}
\DeclarePairedDelimiter\N{\|}{\|}
\DeclarePairedDelimiter\ceil{\lceil}{\rceil}
\DeclarePairedDelimiter\floor{\lfloor}{\rfloor}
\def\scrT{\mathscr{T}}
\def\bX{\boldsymbol{X}}
\def\bY{\boldsymbol{Y}}
\def\bbA{\mathbb{A}}
\def\bbH{\mathbb{H}}
\def\bbM{\mathbb{M}}
\def\bbN{\mathbb{N}}
\def\bbQ{\mathbb{Q}}
\def\bbR{\mathbb{R}}
\def\bbT{\mathbb{T}}
\def\cR{{\cal R}}
\def\cU{{\cal U}}
\def\cX{{\cal X}}
\def\cY{{\cal Y}}
\newcommand{\ubar}[1]{\underaccent{\bar}{#1}}
\newcommand{\ol}[2][1]{{}\mkern#1mu\bar{\mkern-#1mu#2}}
\newcommand{\ul}[2][1]{{}\mkern#1mu\ubar{\mkern-#1mu#2}}
\def\wtd{\widetilde}
\def\what{\widehat}
\def\txs{\textstyle}
\def\scs{\scriptstyle}
\def\ee{\mathrm{e}}
\DeclareMathOperator{\diff}{d\!}
\DeclareMathOperator{\diag}{diag}
\DeclareMathOperator{\trace}{trace}
\DeclareMathOperator{\UI}{ui}
\DeclareMathOperator{\F}{F}
\DeclareMathOperator{\T}{T}
\DeclareMathOperator{\rmc}{c}
\DeclareMathOperator{\rmo}{o}
\newtheorem{theorem}{Theorem}[section]
\newtheorem{lemma}{Lemma}[section]
\theoremstyle{definition}
\newtheorem{definition}{Definition}[section]
\newtheorem{remark}{Remark}[section]
\newtheorem{assumption}{Assumption}[section]
\numberwithin{algorithm}{section}
\numberwithin{equation}{section}
\numberwithin{figure}{section}
\numberwithin{table}{section}
\newcommand*\patchAmsMathEnvironmentForLineno[1]{%
	\expandafter\let\csname old#1\expandafter\endcsname\csname #1\endcsname
	\expandafter\let\csname oldend#1\expandafter\endcsname\csname end#1\endcsname
	\renewenvironment{#1}%
	{\linenomath\csname old#1\endcsname}%
	{\csname oldend#1\endcsname\endlinenomath}%
}%
\newcommand*\patchBothAmsMathEnvironmentsForLineno[1]{%
	\patchAmsMathEnvironmentForLineno{#1}%
	\patchAmsMathEnvironmentForLineno{#1*}%
}%
\def\etal{\emph{et al.}}
\def\LWLZ{Li \etal}
\def\overevent{;\:}
\def\setdelim{:}
\def\opdelim{\colon}
\def\new{^+}
\def\fil{\mathbb{F}}
\def\opL{\mathcal{L}}
\def\Grassmann{\mathbb{G}}
\DeclareMathOperator{\sphere}{\mathbb{S}}
\DeclareMathOperator{\opE}{\mathrm{E}}
\DeclareMathOperator{\opprob}{\mathrm{P}}
\newcommand\E[2][*]{\opE\set#1{#2}}
\newcommand\prob[2][*]{\opprob\set#1{#2}}
\newcommand\Nout[2][*]{N_{\mathrm{out}}\set#1{#2}}
\newcommand\Nin[2][*]{N_{\mathrm{in}}\set#1{#2}}
\newcommand\Nqb[2][*]{N_{\mathrm{qb}}\set#1{#2}}
\DeclareMathOperator{\Sum}{sum}
\DeclareMathOperator{\OO}{O}
\DeclareMathOperator{\oo}{o}
\DeclareMathOperator{\var}{var}
\DeclareMathOperator{\cov}{cov}
\newcommand\ind[1]{\mathbf{1}_{#1}}
\newcommand\varc[1]{\var_{\circ}\!\left(#1\right)}
\newcommand\covc[1]{\cov_{\circ}\!\left(#1\right)}
\newcommand\hyperg[1][2]{\def\hypergTemp{#1}\hypergForm}
\newcommand\hypergForm[1][1]{{}_{\hypergTemp}F_{#1}}
\begin{document}

\title{Nearly optimal stochastic approximation for online principal subspace estimation}

\author{
Xin Liang\thanks{
Yau Mathematical Sciences Center, Tsinghua University, Beijing {\rm100084}, China; and
Yanqi Lake Beijing Institute of Mathematical Sciences and Applications, Beijing {\rm101408}, China.
Email: {\tt liangxinslm@tsinghua.edu.cn}.
}
\and
Zhen-Chen Guo\thanks{
Department of Mathematics, Nanjing University, Nanjing {\rm210093}, China.
Email: {\tt guozhenchen@nju.edu.cn}.
}
\and
Li Wang\thanks{
Department of Mathematics, University of Texas at Arlington, Arlington, TX {\rm76019}, USA.
Email: {\tt li.wang@uta.edu}.
}
\and
Ren-Cang Li\thanks{
Department of Mathematics, University of Texas at Arlington, Arlington, TX {\rm76019}, USA; and
Department of Mathematics, Hong Kong Baptist University, Hong Kong.
Email: {\tt rcli@uta.edu}.
}
\and
Wen-Wei Lin\thanks{
Nanjing Center for Applied Mathematics, Nanjing {\rm211135}, China; and
Department of Applied Mathematics, National Yang Ming Chiao Tung University, Hsinchu {\rm300}, Taiwan.
Email: {\tt wwlin@math.nctu.edu.tw}.
}
}

\maketitle

\begin{abstract}
Principal component analysis (PCA) has been widely used in analyzing high-dimensional data. It converts a set of observed data points of possibly correlated variables into a set of linearly uncorrelated variables via an orthogonal transformation.
To handle streaming data and reduce the complexities of PCA, (subspace) online PCA iterations were proposed to iteratively update the orthogonal transformation by taking one observed data point at a time. Existing works on the convergence of (subspace) online PCA iterations mostly focus on the case where sample are almost surely uniformly bounded.
In this paper, we analyze the convergence of a subspace online PCA iteration under more practical assumption and obtain a nearly optimal finite-sample error bound. Our convergence rate almost matches the minimax information lower bound. We prove that the convergence is nearly global
in the sense that the subspace online PCA iteration is convergent with high probability for random initial guesses. This work also leads to a simpler proof of the recent work on analyzing online PCA for the first principal component only.
\end{abstract}

\smallskip
{\bf Key words.} Principal component analysis, Principal component subspace, Stochastic approximation, High-dimensional data, Online algorithm, Finite-sample analysis

\smallskip
{\bf AMS subject classifications}.
65F99, 62H25, 68W27; secondary 62H12

%%%%%%%%%%%%%%%%%%%%%%%%%%%%%%%%%%%%%%%%%%%%%%%%%%%%%%%
%%% The main text. 正文部分%
%%  图表引用\cref公式引用\eqref参考文献\cite
%%%%%%%%%%%%%%%%%%%%%%%%%%%%%%%%%%%%%%%%%%%%%%%%%%%%%%%

\section{Introduction}
\label{sec:intro}

Principal component analysis (PCA) introduced in \cite{hotelling1933analysis,pearson1901lines} is
one of the most well-known and popular methods for dimensionality reduction in high-dimensional data analysis. With the volume of data continuously increases, the classical PCA suffers from two major bottlenecks: 1) the high-computational complexity, including computing empirical covariance matrix and solving eigen-decomposition problem, and 2) the high storage requirement for the large covariance matrix. These issues prevent PCA from being used for solving problems with large-scale and high-dimensional data.

To reduce both the time and space complexities, Oja~\cite{oja1982simplified} in 1982 proposed an online PCA iteration to approximate the first principal component -- the top eigenvector of the empirical covariance matrix.
Computing the first principal component only is rarely adequate in real-world applications.
Later in 1985, Oja and Karhunen~\cite{ojaK1985stochastic} proposed a subspace online PCA iteration to approximate a principal subspace of
any prescribed dimension.
These methods update  approximations incrementally by processing data one vector at a time as
soon as it comes in such that calculating/storing the empirical covariance matrix explicitly is completely avoided and therefore
result in no memory burden.
%\textcolor{red}{[TODO: the calculation of normalization matrix is a drawback if the reduced dimension $p$ is large.]}
%\textcolor{red}{[Todo: add some advantages of subspace online PCA over online PCA iteration]}
In the rest of this paper, by online PCA iteration we mean the one just for computing the first principal component whereas
a subspace online PCA iteration refers to one for computing a principal subspace.

Although the online PCA iteration \cite{oja1982simplified} was proposed over 30 years ago, its convergence analysis is rather scarce until recently.
Some recent works \cite{2015arXiv150103796B,2016arXiv160206929J,2015arXiv150909002S}
studied the convergence of the online PCA for the first principal component from different points of view
and obtained some results for the case where the samples are almost surely uniformly bounded.
For such a case, De Sa \etal~\cite{desaOR2015global}
%For such a case, De Sa, Olukotun, and R{\'e}~\cite{2014arXiv1411.1134D}
studied a different but closely related problem, in which the angular part is equivalent to the online PCA,
and obtained some convergence results.
In contrast, for the distributions with sub-Gaussian tails (note that the samples of this kind
of distributions may be unbounded), \LWLZ~\cite{liWLZ2017near}
proved a nearly optimal convergence rate
for the online PCA iteration when the initial guess is randomly chosen according to a uniform distribution
and the stepsize chosen in accordance with the sample size.
%It is significant because a uniformly distributed initial vector is nearly orthogonal
%to the principal component with high probability when the dimensionality of the data is large \cite[section~2.4]{blumHK2020foundations}, and thus
%such a random initial vector is not a very good initial guess to start an iteration with.
This result is more general than previous ones in \cite{2015arXiv150103796B,2016arXiv160206929J,2015arXiv150909002S}, because
it is for distributions that can possibly be unbounded,
and the convergence rate is nearly optimal and nearly global.

For the subspace online PCA~\cite{ojaK1985stochastic},
%namely the subspace iteration,
some recent works studied the convergence for the case where the samples are almost surely uniformly bounded.
In a series of papers~\cite{aroraCLS2012stochastic,aroraCS2013stochastic,marinovMA2018streaming,mianjyA2018stochastic},
Arora \etal\ studied PCA as a stochastic optimization problem and its variations via direct optimization approaches,
namely using convex relaxation and adding regularizations. The subspace iteration falls into one variant of their methods.
Hardt and Price~\cite{hardtP2014noisy} and Balcan \etal~\cite{balcanDWY2016improved} treated the subspace iteration
as a noisy power method and analyzed its convergence.
Li \etal~\cite{liLL2016rivalry} investigated the convergence for the case where the initial guess follows the
normal distribution.
Garber \etal~\cite{garberHJKMNS2016faster} used the shift-and-invert technique to speed up the convergence,
but their analysis was only done for the top eigenvector.
Allen-Zhu and Li~\cite{allenzhuL2017first} proposed  a faster variant of subspace online PCA iteration,
along with their gap-dependent and gap-free convergence results.
%\Blue{
However, those works are performed under the assumption that samples are almost surely uniformly bounded.
For distributions, e.g., sub-Gaussians, that are possibly unbounded,
%these gap-free results do not directly bound the distance between
%the computed subspace and the theoretical dominant subspace of the same dimension but rather the distance
%from the computed subspace to a theoretical dominant subspace of possibly a very larger dimension to make sure there is a large gap in eigenvalues, and also
a thorough convergence analysis of the subspace online PCA remains elusive.
%}
%a nearly optimal and nearly global convergence rate of the subspace online PCA .

%\textcolor{red}{[TODO: add more description about your contributions.]}
In this paper, we aim to fill up the gap by establishing a nearly optimal and nearly global convergence rate
for the subspace online PCA for samples of possibly unbounded distributions of sub-Gaussians. In going through the proving process in \cite{liWLZ2017near} for online PCA iteration, we find that there are three major hurdles,
as we will explain in detail in \cref{sec:comparison},
that prevent their proving technique for one-dimensional case, i.e., the most significant principal component, from
being straightforwardly generalized to analyze the multidimensional case, i.e., significant principal subspaces.
To overcome these challenging difficulties, we adopt a new proving technique
and apply it to a variant of subspace online PCA to fulfill the goal.
%\Blue{
The variant is mathematically equivalent to the original one in Oja and Karhunen~\cite{ojaK1985stochastic}
except without explicit
references to QR decompositions for orthogonalization, and is essentially the same as
the orthogonal Oja algorithm of Abed-Meraim \etal~\cite{abedmeraimACH2000orthogonal}.
In addition to the advantages inherited from online PCA,
it leads to a computationally economical formula for the subspace online iteration.
%, unlike others in the literature that use, e.g., the Gram-Schmidt orthogonalization.
%Moreover, it enjoys the nearly optimal and nearly global convergence rate.
%It is the first time for the subspace online PCA to reach this type of convergence rate. To verify the conclusion,
%A novel proving technique is uncovered to obtain the expected theoretical convergence results for the proposed subspace online PCA.
Some of the proving techniques are built by ourselves with the help of the theory of special functions of a matrix argument, which is rarely used in the statistical community.
We mention in passing that our proving technique may be specialized to the online PCA for a simpler proof than that in~\cite{liWLZ2017near} for the most significant principal component.

The rest of this paper is organized as follows. We first briefly introduce the related works in \cref{sec:related-work}.
In \cref{sec:PCA-reformulate}, we proposed a variant of the subspace online PCA iteration \cref{eq:onlinePCA-9v}, which will be the version to be analyzed.
Our main results are stated in \cref{sec:main-results} together with three main theorems and discussions of the newly invented proving technique, where we compare our results for one-dimensional case with the recent results
in \cite{liWLZ2017near} and outline the technical differences in proofs between ours and those from \cite{liWLZ2017near}.
Our proofs are given in \cref{sec:proof-of-cref-thm-thm1-ppr1,sec:proof-of-cref-thm-thm2-ppr1}.
%\note{Modify} Due to their complexities, the proofs of these results are deferred to Appendice \ref{sec:angle}, \ref{sec:proof-of-cref-thm-thm1-ppr1}, and \ref{sec:proof-of-cref-thm-thm2-ppr1}.
Finally, in \cref{sec:conclusion} we draw our conclusions. Some of the complicated calculations are deferred
to \cref{sec:supplementary-proofs} for clarity.

\emph{Notation.} %\label{para:notations-}
$\bbR^{n\times m}$ is the set
of all $n\times m$  real matrices, $\bbR^n=\bbR^{n\times 1}$,
and $\bbR=\bbR^1$. $I_n$ (or simply $I$ if its dimension is
clear from the context) is the $n\times n$ identity matrix and $e_j$ is its $j$th column (usually with dimension determined by the context).
For a matrix $X$, $\sigma(X)$, $\N{X}_\infty$, $\N{X}_2$ and $\N{X}_{\F}$ are the multiset of the singular values,
the $\ell_{\infty}$-operator norm, the spectral norm, and the Frobenius norm of $X$, respectively.
$\cR(X)$ is the subspace spanned by the columns of $X$, $X_{(i,j)}$ is the $(i,j)$th entry of $X$, and $X_{(k:\ell,:)}$ and
$X_{(:,i:j)}$ are two submatrices of $X$ consisting of its row $k$ to row $\ell$ and column $i$ to column $j$, respectively.
$X\circ Y$ is the Hadamard, i.e., entrywise, product of matrices (vector) $X$ and $Y$ of the same size.

For any vector or matrix $X,\,Y$,
$X\le Y$ ($X<Y$) means $X_{(i,j)}\le Y_{(i,j)}$ ($X_{(i,j)} < Y_{(i,j)}$) for any $i,j$. $X\ge Y$ ($X>Y$) if $-X\le-Y$ ($-X<-Y$);
$X\le \alpha$ ($X<\alpha$) for a scalar $\alpha$ means $X_{(i,j)}\le \alpha$ ($X_{(i,j)}< \alpha$) for any $i,j$;
similarly $X\ge\alpha$ and $X>\alpha$.
For a subset or an event $\bbA$, $\bbA^{\rmc}$ is the complement set of $\bbA$.
By $\sigma\txs\set{\bbA_1,\dots, \bbA_p}$ we denote the $\sigma$-algebra generated by the events $\bbA_1,\dots, \bbA_p$.
$\bbN=\{1,2,3,\ldots\}$. $\txs\E{\bX\overevent\bbA}:=\txs\E{\bX\ind{\bbA}}$ denotes the expectation of a random variable $\bX$ over event $\bbA$.
Note that
\begin{equation}\label{eq:expectation-over2cond}
\txs\E{\bX\overevent \bbA}=\txs\E{\bX\given \bbA}\txs\prob{\bbA}.
\end{equation}
For a random vector or matrix $\bX$, $\txs\E{\bX}:=\left[ \txs\E{\bX_{(i,j)}} \right]$.
Note that $\N{\txs\E{\bX}}_{\UI}\le\txs\E{\N{\bX}_{\UI}}$ for $\UI=2,{\scs\F}$.
Write $\covc{\bX,\bY}:=\txs\E{[\bX-\txs\E{\bX}]\circ[\bY-\txs\E{\bY}]}$ and $\varc{\bX}:=\covc{\bX,\bX}$.

%\section{Canonical Angles}\label{sec:angle}
%\Blue{
Denote by $\Grassmann_p(\bbR^d)$ the Grassmann manifold of all $p$-dimensional subspaces of $\bbR^d$.
%}
For two subspaces $\cX,\,\cY\in\Grassmann_p(\bbR^d)$,
let $X,\,Y\in{\mathbb C}^{d\times p}$  be the  basis matrices of
$\cX$ and $\cY$, respectively, i.e.,
$\cX=\cR(X)$ and $\cY=\cR(Y)$,
and denote by $\sigma_j$ for $1\le j\le p$ in  nondecreasing order, i.e.,
$\sigma_1\le\cdots\le\sigma_p$, the singular values of
$
(X^{\T}X)^{-1/2}X^{\T}Y(Y^{\T}Y)^{-1/2}.
$
The $p$ \emph{canonical angles $\theta_j(\cX,\cY)$
	between $\cX$ and $\cY$\/} are defined
by
%\begin{equation}\label{eq:indv-angles-XY}
$ 0\le\theta_j(\cX,\cY):=\arccos\sigma_j\le\tfrac {\pi}2$ %\quad\text{
for $1\le j\le p$.
%\end{equation}
They are in  non-increasing order, i.e., $\theta_1(\cX,\cY)\ge\cdots\ge\theta_p(\cX,\cY)$. Set
%\begin{equation}\label{eq:mat-angles-XY}
$
\Theta(\cX,\cY)=\diag(\theta_1(\cX,\cY),\ldots,\theta_p(\cX,\cY)).
$
%\end{equation}
It can be seen that angles so defined are independent of the  basis matrices $X$ and $Y$, which are not unique.
With the definition of canonical angles,
$ \N{\sin\Theta(\cX,{\cal Y})}_{\UI}$ %\quad\text{
for $\UI=2,{\scriptstyle \F}$ %}
are metrics on $\Grassmann_p(\bbR^d)$ \cite[Section~II.4]{stewartS1990matrix}.

In what follows, we sometimes place a vector or matrix in one or both
arguments of $\theta_j(\,\cdot\,,\,\cdot\,)$ and $\Theta(\,\cdot\,,\,\cdot\,)$ with the understanding that it is about
the subspace spanned by the vector or the columns of the matrix argument.
%For any two subspaces $\cR(X),\cR(Y)\in\Grassmann_p(\bbR^d)$,
%let the singular values of
%$$
%(X^{\T}X)^{-1/2}X^{\T}Y(Y^{\T}Y)^{-1/2}
%$$
%be $\sigma_1\ge\dots\ge\sigma_d\ge0$.
%The canonical angles between these two subspaces are defined as
%$$
%\theta_i=\arccos(\sigma_i)\quad\mbox{for $1\le i\le p$}
%$$
%and the canonical angle matrix
%$$
%\Theta(\cR(X),\cR(Y)):=\Theta(X,Y):=\diag(\theta_1,\dots,\theta_p).
%$$
For any
$X\in \bbR^{d\times p}$,
if $X_{(1:p,:)}$ is nonsingular, then we can define
\begin{equation}\label{eq:scrT-dfn}
\scrT(X):=X_{(p+1:d,:)}X_{(1:p,:)}^{-1}.
\end{equation}

\section{Related Work} \label{sec:related-work}

Let $\bX\in \bbR^d$ be a $d$-dimensional random vector with mean $\txs\E{\bX}$ and covariance
\[
\Sigma=\txs\E{(\bX-\txs\E{\bX})(\bX-\txs\E{\bX})^{\T}}.
\]
To reduce the dimension of $\bX$ from $d$ to $p$ (usually $p \ll d$),
PCA looks for a $p$-dimensional linear subspace that is closest to the centered random vector $\bX-\txs\E{\bX}$ in the mean squared sense,
through the independent and identically distributed samples $X^{(1)},\dots,X^{(n)}$.

%Denote by $\Grassmann_p(\bbR^d)$ the Grassmann manifold of $p$-planes in $\bbR^d$, or equivalently, the set  of all $p$-dimensional subspaces of $\bbR^d$.
Without loss of generality, we assume $\txs\E{\bX}=0$.
Then PCA corresponds to a stochastic optimization problem
\begin{equation}\label{eq:PCA-opt}
\min\limits_{\cU\in\Grassmann_p(\bbR^d)}\,\txs\E{\N{(I_d-\Pi_{\cU})\bX}_2^2},
\end{equation}
where  $\Pi_{\cU}$ is the orthogonal projector
onto the subspace $\cU$.
Let $\Sigma=U\Lambda U^{\T}$ be the spectral decomposition of $\Sigma$,
where
\begin{equation}\label{eq:eigD-covar}
\Lambda=\diag(\lambda_1,\dots,\lambda_d)
\,\,\text{with $\lambda_1\ge\dots\ge \lambda_p \ge \lambda_{p+1}\ge\dots\ge\lambda_d\ge0$},
\end{equation}
and orthogonal $U=[u_1,\dots,u_d]$.
If $\lambda_p>\lambda_{p+1}$,
then the unique solution to the optimization problem \cref{eq:PCA-opt}, namely the $p$-dimensional principal
subspace of $\Sigma$, is $\cU_*=\cR([u_1,\dots,u_p])$, the subspace spanned by $u_1,\dots,u_p$.
In practice, $\Sigma$ is unknown, and sample data $\{X^{(1)},\ldots,X^{(n)} \}$ is generally used to estimate $\cU_*$.
The classical PCA does it by the spectral decomposition of the empirical covariance matrix $\what \Sigma=\tfrac{1}{n}\txs\sum\limits_{i=1}^n X^{(i)}(X^{(i)})^{\T}$. %-\tfrac{1}{n}\txs\sum\limits_{i=1}^n X_i
Specifically, the classical PCA uses $\what{\cU}_*=\cR([\what u_1,\dots,\what u_p])$ to estimate $\cU_*$,
where $\what u_i$ is the corresponding eigenvectors of $\what \Sigma$.
In the classical PCA, obtaining the empirical covariance matrix has time complexity $\OO(nd^2)$ and space complexity $\OO(d^2)$.
So storing and calculating a large empirical covariance matrix can be very expensive when the data are of high dimension,
not to mention the cost $\OO(d^3)$
%\Blue{
by dense solvers or $\OO(pnd)$ (more of $\OO(p^2nd)$ with full reorthogonalization for robustness) by some iterative methods for computing its eigenvalues and eigenvectors \cite{demm:1997}.
%}
%Even using iterative methods to obtain an approximate solution, the time complexity is still $\OO(pnd)$.

To analyze the accuracy of the above estimation using a finite number of samples, an important quantity is the distance between $\cU_*$ and $\what{\cU}_*$ by their canonical angles. % (see \cref{sec:angle})
Vu and Lei~\cite[Theorem~3.1]{vuL2013minimax} proved that
if $p(d-p)\tfrac{\sigma_*^2}{n}$ is bounded for some constant $\sigma_*$, then
\begin{equation}\label{eq:minimax-bound}
\inf\limits_{\what{\cU}_*\in\Grassmann_p(\bbR^d)}
\sup\limits_{\bX\in\mathcal{P}_0(\sigma_*^2,d)}
\txs\E{\N{\sin\Theta(\what{\cU}_*,{\cU}_*)}_{\F}^2}
\ge cp(d-p)\tfrac{\sigma_*^2}{n},
\end{equation}
where $c>0$ is an absolute constant, and $\mathcal{P}_0(\sigma_*^2,d)$ is the set of all $d$-dimensional
sub-Gaussian distributions for which the eigenvalues of the covariance matrix satisfy
\begin{equation}\label{eq:common-rate}
\frac{\lambda_1\lambda_{p+1}}{(\lambda_p-\lambda_{p+1})^2}\le\sigma_*^2.
\end{equation}
Note that its left-hand side is the effective noise variance.

To reduce both the time and space complexities, Oja~\cite{oja1982simplified} proposed an
\emph{online PCA iteration}:
\begin{equation}\label{eq:onlinePCA-1v}
\wtd u^{(n)}= u^{(n-1)}+\beta^{(n-1)} X^{(n)}(X^{(n)})^{\T}u^{(n-1)},
\,\,
u^{(n)}=\wtd u^{(n)}\N{\wtd u^{(n)}}_2^{-1},
\end{equation}
to approximate the first principal component,
where $\beta^{(n)}>0$ is a stepsize.
Later Oja and Karhunen~\cite{ojaK1985stochastic} proposed a \emph{subspace online PCA iteration}:
\begin{equation}\label{eq:onlinePCA-9v}
\wtd U^{(n)}= U^{(n-1)}+X^{(n)}(X^{(n)})^{\T}U^{(n-1)}\diag(\beta_1^{(n-1)},\dots,\beta_p^{(n-1)}),
\,\,
U^{(n)}=\wtd U^{(n)}R^{(n)},
\end{equation}
to approximate the principal subspace $\cU_*$,
where $\beta_i^{(n)}>0$ for $1\le i\le p$ are stepsizes, and $R^{(n)}$ is a normalization matrix to make $U^{(n)}$ have orthonormal columns. The QR decomposition  is often used by almost all existing works in the literature; see, e.g., \cite{ojaK1985stochastic,allenzhuL2017first,mianjyA2018stochastic} and references therein. It can be seen that these methods update the approximations incrementally by processing data one vector at a time as
soon as it comes in, completely avoiding the explicit calculation of the empirical covariance matrix.
In the subspace online PCA, obtaining an approximate principal subspace has time complexity $\OO(p^2d)$  and space complexity $\OO(pd)$ per iterative step. %Since $p$ is usually much smaller than $d$, online PCA has much lower complexities than the classical PCA.

Recently, \LWLZ~\cite{liWLZ2017near}
proved a nearly optimal convergence rate
for the iteration \cref{eq:onlinePCA-1v} for the distributions with sub-Gaussian tails (note %that
the samples of this kind of distributions may be unbounded). One of their main results reads as follows.
For the initial guess $u^{(0)}$ that is randomly chosen according to a uniform distribution
and the stepsize $\beta$ is chosen in accordance with the sample size $n$,
there exists a high-probability event $\bbA_*$ with $\txs\prob{\bbA_*}\ge 1-\delta$ such that%
%\footnote{
%	Actually, \cite{2016arXiv160305305L} shows a detailed form
%	\[
%		\txs\E{\abs{\tan\Theta(u^{(n)},u_*)}^2 \given \mathcal{A}_*}
%		\le C(d,n,\delta)\tfrac{\ln n}{n}\tfrac{1}{\lambda_1-\lambda_2}\txs\sum\limits_{i=2}^{d} \tfrac{\lambda_1\lambda_i}{\lambda_1-\lambda_i}.
%	\]
%}
\begin{subequations}\label{eq:result-p1}
	\begin{align}
	\txs\E{\abs{\tan\Theta(u^{(n)},u_*)}^2 \given \bbA_*}
	&\le C(d,n,\delta)\tfrac{\ln n}{n}\tfrac{1}{\lambda_1-\lambda_2}\txs\sum\limits_{i=2}^{d} \tfrac{\lambda_1\lambda_i}{\lambda_1-\lambda_i}
	\label{eq:result-p1a}\\
	&\le C(d,n,\delta)\tfrac{\lambda_1\lambda_2}{(\lambda_1-\lambda_2)^2}\tfrac{(d-1)\ln n}{n}, \label{eq:result-p1b}
	\end{align}
\end{subequations}
where $\delta\in[0,1)$, $u_*=u_1$  is the first principal component, and $C(d,n,\delta)$ can be approximately treated as a constant
because for sufficiently large $d$, $C(d,n,\delta)$ goes to a constant as $n\to\infty$.
It can be seen that this bound matches the minimax lower bound \cref{eq:minimax-bound}
up to a logarithmic factor of $n$, hence, \emph{nearly optimal}.
%Also, the convergence rate holds true as long as the initial approximation satisfies
%\begin{equation}\label{eq:initial-p1}
%\abs{\tan\Theta(u^{(0)},u_*)}\le cd,
%\end{equation}
%for some constant $c>0$, which means \emph{nearly global}.
It is significant because a uniformly distributed initial value is nearly orthogonal
to the principal component with high probability when $d$ is large \cite[section~2.4]{blumHK2020foundations},
and thus such a random initial vector is not a very good initial guess to start an iteration with.
This result is more general than previous ones in \cite{2015arXiv150103796B,2016arXiv160206929J,2015arXiv150909002S}, because
it is for distributions that can possibly be unbounded,
and the convergence rate is nearly optimal and nearly global.
%For more details of comparison, the reader is referred to \cite{liWLZ2017near}.
Unfortunately, the above significant work~\cite{liWLZ2017near} on the online PCA iteration cannot be trivially generalized to the subspace online PCA iteration due to three major difficulties %, which will be
to be discussed in \cref{sec:comparison}.
%To deal with the difficulties that prevent the technique  in~\cite{liWLZ2017near} from being adapted for the subspace online PCA iteration, we will present a new proving technique, which can also be used to simplify the proof in~\cite{liWLZ2017near} for the online PCA iteration and leads to a tighter bound for high-dimensional data.
%To show the effectiveness of the new analysis technique, we use a much more simplified and computationally economic formula for the subspace online iteration, which is essentially equivalent to Orthogonal Oja Algorithm in \cite{abedmeraimACH2000orthogonal}.
%In fact, the subspace online PCA iteration with QR decomposition can also be analyzed using this technique, if more efforts are made.
%In the following of this paper, we will first introduce the simplified version of the subspace online PCA, and then present the main theoretical results for the convergence of the simplified algorithm.

\section{Efficient Subspace Online PCA} \label{sec:PCA-reformulate}
Let $X^{(n)}\in\bbR^d$ for $n=1,2,\dots$ be independent and identically distributed samples of $\bX$. As $\{X^{(1)},\dots,X^{(n)}\}$ comes in a sequential order, the subspace online PCA iteration \cref{eq:onlinePCA-9v} of
Oja and Karhunen~\cite{ojaK1985stochastic} is used to compute the principal subspace of dimension $p$. Differing from \cref{eq:onlinePCA-9v}, our proposed subspace online PCA has the following changes:
\begin{enumerate}
	\item a fixed stepsize $\beta_{i}^{(n)} = \beta > 0, \forall n, i=1,\ldots,p$, is used;
	\item the normalization matrix to make $U^{(n)}$ have orthonormal columns are explicitly given by
	\begin{equation}\label{eq:onlinePCA-9v:R}
	R^{(n)}=[(\wtd U^{(n)})^{\T}\wtd U^{(n)}]^{-1/2}.
	\end{equation}
\end{enumerate}
With the changes, our subspace online PCA iteration becomes
\begin{equation}\label{eq:our-onlinePCA-9v}
\wtd U^{(n)}= U^{(n-1)}+ \beta X^{(n)}(X^{(n)})^{\T}U^{(n-1)},
\,\,
U^{(n)}=\wtd U^{(n)} [(\wtd U^{(n)})^{\T}\wtd U^{(n)}]^{-1/2}.
\end{equation}
It can be verified that $U^{(n)}$ has orthonormal columns.
%With the help of the following lemma,
This variant is equivalent to \cref{eq:onlinePCA-9v} in the sense that
both $U^{(n)}$ here and the one there  have the same column space.
It turns out that the matrix square root and the inverse in \cref{eq:onlinePCA-9v:R} can be done
analytically as in Lemma~\ref{lm:basic} below, leading to a simple and computationally economical formula
for $U^{(n)}$ of \cref{eq:our-onlinePCA-9v}.
%, as in the orthogonal Oja algorithm \cite{abedmeraimACH2000orthogonal}.
An equivalent of Lemma~\ref{lm:basic}  was  implied in \cite{abedmeraimACH2000orthogonal}, although not
explicitly and rigorously stated,
to analytically transform iteration formula \cref{eq:our-onlinePCA-9v}.
For that reason, we credit the lemma to  \cite{abedmeraimACH2000orthogonal}, but provide a proof for
completeness because of some missing details in the derivation in  \cite{abedmeraimACH2000orthogonal}.
%However,
%according to Lemma~\ref{lm:basic}, we may have a different interation formula.
%We will see that the subspace online iteration~\cref{eq:our-onlinePCA-9v} is
%more  and $U^{(n)}$ can be stated in a more explicit manner.

%\note{the function of this lemma seems to be covered by Lemma~\ref{lm:diff-V}.}
\begin{lemma}[{\cite{abedmeraimACH2000orthogonal}}]\label{lm:basic}
	Let $V\in{\mathbb R}^{d\times p}$ with $V^{\T}V=I_p$, $0\ne x\in{\mathbb R}^d$, and $0<\beta\in{\mathbb R}$, and let
	$
	W:=V+\beta xx^{\T}V=(I_d+\beta xx^{\T})V, \quad V_+:=W(W^{\T}W)^{-1/2}.
	$
	If $V^{\T}x\ne 0$, then
	\[
		V_+=V+\beta\tilde\alpha xz^{\T}-\frac {1-\tilde\alpha}{\gamma^2}Vzz^{\T},
	\]
	%\[
	%V_+=V+\beta xx^{\T}V-[1-(1+\alpha)^{-1/2}](V+\beta xx^{\T}V)\tilde z\tilde z^{\T},
	%\]
	where
	$z=V^{\T}x$, $\gamma=\N{z}_2$, $\tilde z= z/{\gamma}$, and $\alpha=\beta(2+\beta \N{x}_2^2)\gamma^2$, and $\tilde\alpha=(1+\alpha)^{-1/2}$.
	In particular, $V_+^{\T}V_+=I_p$.
\end{lemma}

%\iffalse
\begin{proof}
	We have
	$ %\begin{align*}
	W^{\T}W  = V^{\T}[I_d+\beta xx^{\T}]^2V
	%=V^{\T}\Big[I_d+\beta\big(2+\beta\N{x}_2^2\big) xx^{\T}\Big]V
	=I_p+\alpha \tilde z\tilde z^{\T}.
	$ %\end{align*}
	Let $Z_{\bot}\in{\mathbb R}^{p\times (p-1)}$ such that $[\tilde z,Z_{\bot}]^{\T}[\tilde z,Z_{\bot}]=I_p$. The eigen-decomposition of
	$W^{\T}W$ is
	$
	W^{\T}W=[\tilde z,Z_{\bot}]\begin{bsmallmatrix}
	1+\alpha &  \\
	& I_{p-1}
	\end{bsmallmatrix}[\tilde z,Z_{\bot}]^{\T}
	$
	which yields
	\[
	(W^{\T}W)^{-1/2} %&
	=[\tilde z,Z_{\bot}]\begin{bsmallmatrix}
	(1+\alpha)^{-1/2} &  \\
	& I_{p-1}
	\end{bsmallmatrix}[\tilde z,Z_{\bot}]^{\T} %\\
	%&=(1+\alpha)^{-1/2} \tilde z\tilde z^{\T}+Z_{\bot}Z_{\bot}^{\T} \\
	%&=(1+\alpha)^{-1/2} \tilde z\tilde z^{\T}+I_p-\tilde z\tilde z^{\T} \\
	%&
	=I_p-[1-(1+\alpha)^{-1/2}]\tilde z\tilde z^{\T}.
	\]
	Therefore,
	\begin{align*}
	V_+&=(V+\beta xx^{\T}V)\{I_p-[1-(1+\alpha)^{-1/2}]\tilde z\tilde z^{\T}\} \\
	&=V+\beta xx^{\T}V-[1-(1+\alpha)^{-1/2}](V+\beta xx^{\T}V)\tilde z\tilde z^{\T} %\\
	%&=V+\beta x(x^{\T}V)-[1-(1+\alpha)^{-1/2}]V\tilde z\tilde z^{\T}-[1-(1+\alpha)^{-1/2}]\beta x(x^{\T}V)\tilde z\tilde z^{\T} \\
    %&\hspace*{4cm}
	\qquad(\text{use $x^{\T}V=z^{\T}=\gamma\tilde z^{\T}$})\\
	%&=V+\beta\gamma x\tilde z^{\T}-[1-(1+\alpha)^{-1/2}]V\tilde z\tilde z^{\T}-[1-(1+\alpha)^{-1/2}]\beta\gamma x\tilde z^{\T}\\
	&=V+\beta\gamma x\tilde z^{\T}-[1-(1+\alpha)^{-1/2}]V\tilde z\tilde z^{\T}-[1-(1+\alpha)^{-1/2}]\beta\gamma x\tilde z^{\T}\\
	%&=V+(1+\alpha)^{-1/2}\beta\gamma x\tilde z^{\T}-[1-(1+\alpha)^{-1/2}]V\tilde z\tilde z^{\T}\\
	&=V+(1+\alpha)^{-1/2}\beta x z^{\T}-\tfrac {1-(1+\alpha)^{-1/2}}{\gamma^2}V z z^{\T},
	\end{align*}
	as expected, knowing $\tilde\alpha=(1+\alpha)^{-1/2}$.
\end{proof}
%\fi

To apply this lemma to transform \cref{eq:our-onlinePCA-9v}, we perform substitutions:
\[
\wtd U^{(n)} \leftarrow W,\,\, U^{(n-1)}\leftarrow V,\,\,  U^{(n)}\leftarrow V_+,\,\,  X^{(n)}\leftarrow x,\,\,
Z^{(n)}\leftarrow z.
\]
%``square-root-normalization''
%\begin{gather*}
%\wtd U^{(n)} \leftarrow W,\,\, U^{(n-1)}\leftarrow V,\,\,  U^{(n)}\leftarrow V_+,\,\,  X^{(n)}\leftarrow x, \\
%Z^{(n)} = (U^{(n-1)})^{\T}X^{(n)}\leftarrow z,\,\,
%\alpha^{(n)} = \beta\left(2+\beta (X^{(n)})^{\T}X^{(n)}\right) \N{Z^{(n)}}_2^2\leftarrow \alpha, \,\,
%\tilde{z} = \tfrac{1}{\N{Z^{(n)}}_2} Z^{(n)}
%\end{gather*}
%According to Lemma \ref{lm:basic},
to get
\[
U^{(n)} %&= U^{(n-1)} + \beta X^{(n)} (Z^{(n)})^{\T} - [1-(1+\alpha)^{-1/2}](U^{(n-1)}+\beta X^{(n)}(Z^{(n)})^{\T} )\tfrac{1}{\N{Z^{(n)}}_2^2} Z^{(n)} (Z^{(n)})^{\T} \\
%&=U^{(n-1)} + \beta X^{(n)} (Z^{(n)})^{\T}   - \beta [1-(1+\alpha)^{-1/2}] \tfrac{ X^{(n)}(Z^{(n)})^{\T}Z^{(n)} (Z^{(n)})^{\T}} {\N{Z^{(n)}}_2^2}  - [1-(1+\alpha)^{-1/2}]\tfrac{U^{(n-1)} Z^{(n)} (Z^{(n)})^{\T} }{\N{Z^{(n)}}_2^2} \\
%&
=U^{(n-1)} + \beta (1+\alpha^{(n)})^{-1/2}X^{(n)} (Z^{(n)})^{\T}
           - [1-(1+\alpha^{(n)})^{-1/2}]\tfrac{U^{(n-1)} Z^{(n)} (Z^{(n)})^{\T} }{\N{Z^{(n)}}_2^2},
\]
where $\alpha^{(n)} = \beta\left(2+\beta (X^{(n)})^{\T}X^{(n)}\right) \N{Z^{(n)}}_2^2$ and
$Z^{(n)}= (U^{(n-1)})^{\T}X^{(n)}$.
\begin{algorithm}[t]
	\caption{Subspace Online PCA}\label{alg:onlinePCA:ppr1}
	\begin{algorithmic}[1]
		\STATE Choose $U^{(0)}\in\bbR^{d\times p}$ with $(U^{(0)})^{\T}U^{(0)}=I$,
		and choose stepsize $\beta>0$.
		\FOR{$n=1,2,\dots$ until convergence}
		\STATE Take an $\bX$'s sample $X^{(n)}$;
		\STATE $Z^{(n)}= (U^{(n-1)})^{\T}X^{(n)}$,
		$\alpha^{(n)}=\beta\left(2+\beta (X^{(n)})^{\T}X^{(n)}\right)(Z^{(n)})^{\T}Z^{(n)}$,
		$\wtd\alpha^{(n)}=(1+\alpha^{(n)})^{-1/2}$;
		\STATE
		$
		U^{(n)}
		= U^{(n-1)}+\beta \wtd\alpha^{(n)} X^{(n)}(Z^{(n)})^{\T} - \tfrac{1-\wtd\alpha^{(n)}}{(Z^{(n)})^{\T}Z^{(n)}}U^{(n-1)}Z^{(n)}(Z^{(n)})^{\T}.
		$
		\ENDFOR
	\end{algorithmic}
\end{algorithm}
Finally, we outline
in \Cref{alg:onlinePCA:ppr1} the subspace online PCA algorithm derived from \cref{eq:our-onlinePCA-9v}.
This is essentially the same as the orthogonal Oja algorithm \cite{abedmeraimACH2000orthogonal} and will be one we
are going to analyze. Computationally, it has the advantages of not
involving any explicit orthogonalization by the Gram-Schmidt process or matrix square root,
but only in terms of matrix-vector multiplications. This formulation is numerically  stable
and computationally fast.
%More importantly, the proposed formulation turns out to be one of the keys that make our analysis go through.
At convergence, it is expected that
\[
U^{(n)}\to U_*:= U\begin{bsmallmatrix} I_p \\ 0\end{bsmallmatrix}=[u_1,u_2,\ldots,u_p]
\]
in the sense that $\N{\sin\Theta(U^{(n)}, U_*)}_{\UI}\to 0$ as $n\to\infty$.
The rest of this paper is devoted to analyze its convergence, with the help of the next lemma.

\begin{lemma}\label{lm:Tan(Theta)}
	For $V\in \bbR^{d\times p}$
	with nonsingular $V_{(1:p,:)}$, we have for $\UI=2,{\scs \F}$
	\begin{equation}\label{eq:Tan(Theta)}
	\N*{\tan\Theta(V,\begin{bsmallmatrix} I_p \\ 0 \end{bsmallmatrix})}_{\UI}
	=\N{\scrT(V)}_{\UI},
	\end{equation}
where $\scrT(V)$ is defined as in \eqref{eq:scrT-dfn}.
\end{lemma}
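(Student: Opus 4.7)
The plan is to reduce to a canonical form by exploiting the basis invariance of canonical angles, and then read off the singular values explicitly via an SVD.

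First, I would partition $X = \begin{bmatrix} X_1 \\ X_2 \end{bmatrix}$ with $X_1 = X_{(1:p,:)}\in\bbR^{p\times p}$ nonsingular and $X_2 = X_{(p+1:d,:)}\in\bbR^{(d-p)\times p}$, so that by definition $\scrT(X) = X_2 X_1^{-1}$. Since $\theta_j(\cX,\cY)$ depends only on the subspaces $\cR(X)$ and $\cR(Y)$ and not on the particular basis matrices, I may replace $X$ by the alternative basis $X X_1^{-1} = \begin{bmatrix} I_p \\ T \end{bmatrix}$ where $T := \scrT(X)$. Likewise, $Y := \begin{bmatrix} I_p \\ 0 \end{bmatrix}$ already has orthonormal columns, so $(Y^\T Y)^{-1/2} = I_p$.

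Second, I would compute directly
\[
(X^\T X)^{-1/2} X^\T Y (Y^\T Y)^{-1/2} = (I_p + T^\T T)^{-1/2}.
\]
Taking a thin SVD $T = U\Sigma V^\T$ with $\Sigma = \diag(\tau_1,\ldots,\tau_p)$ (padded if $d-p>p$) and $\tau_1\ge\cdots\ge\tau_p\ge 0$, one gets
\[
(I_p + T^\T T)^{-1/2} = V\,\diag\!\bigl((1+\tau_1^2)^{-1/2},\ldots,(1+\tau_p^2)^{-1/2}\bigr)\,V^\T,
\]
so the singular values of this matrix, placed in nondecreasing order as required by \eqref{eq:indv-angles-XY}, are $\sigma_j = (1+\tau_{p+1-j}^2)^{-1/2}$ for $j=1,\ldots,p$. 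Hence $\cos\theta_j(\cX,\cY) = (1+\tau_{p+1-j}^2)^{-1/2}$, which yields $\tan\theta_j(\cX,\cY) = \tau_{p+1-j}$.

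Third, since $\Theta(X,Y)$ is diagonal with entries in $[0,\pi/2)$ (using nonsingularity of $X_1$, which forces all canonical angles to be strictly less than $\pi/2$), $\tan\Theta(X,Y)$ is diagonal with nonnegative entries $\tau_{p+1-j}$, so its singular values are exactly $\{\tau_j\}_{j=1}^p$, i.e., the singular values of $T=\scrT(X)$. Therefore $\N{\tan\Theta(X,Y)}_{\UI} = \N{\scrT(X)}_{\UI}$ for both the spectral norm ($\UI=2$) and the Frobenius norm ($\UI=\F$), since these norms are unitarily invariant and determined by the singular value multiset.

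The only mildly subtle point will be bookkeeping the orderings: canonical angles are arranged in nonincreasing order while their cosines are in nondecreasing order, so I must match $\tau_j$ in nonincreasing order with $\theta_j$ in nonincreasing order. Beyond that, the argument is a short computation once one passes to the canonical basis $\begin{bmatrix} I_p \\ T\end{bmatrix}$.
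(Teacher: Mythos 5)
Your proof is correct and follows essentially the same route as the paper: pass to the basis $\begin{bmatrix} I_p \\ \scrT(X)\end{bmatrix}$ by basis invariance, observe that the cosine matrix is $[I_p+\scrT(X)^{\T}\scrT(X)]^{-1/2}$, and translate its singular values $(1+\tau_j^2)^{-1/2}$ into $\tan\theta_j=\tau_j$. Your extra SVD bookkeeping of the orderings is a harmless elaboration of the same computation.
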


\begin{proof}
Let $Y=\begin{bsmallmatrix} I_p \\ 0 \end{bsmallmatrix}\in \bbR^{d\times p}$.
It can be seen that the singular values $\sigma_j=\cos\theta_j(V,Y)$ of
\[
	\left[I+\scrT(V)^{\T}\scrT(V)\right]^{-1/2}\begin{bsmallmatrix} I \\ \scrT(V) \end{bsmallmatrix}^{\T}\begin{bsmallmatrix} I \\ 0 \end{bsmallmatrix}
		=\left[I+\scrT(V)^{\T}\scrT(V)\right]^{-1/2}
\]
and the singular values $\tau_j$ of $\scrT(V)$  are related by
$\tau_j=\tfrac {\sqrt{1-\sigma_j^2}}{\sigma_j}=\tan\theta_j(V,Y)$.
Hence the identity \cref{eq:Tan(Theta)} holds.
\end{proof}

%\begin{remark}\label{rk:alg:onlinePCA:ppr1:X}
%A couple of comments are in order for \Cref{alg:onlinePCA:ppr1}.
%\begin{enumerate}
%	\item Vectors $X^{(n)}\in\bbR^d$ for $n=1,2,\dots$ are independent and identically distributed samples of $\bX$.
%	\item If the algorithm converges, it is expected that
%	\[
%	U^{(n)}\to U_*:= U\begin{bsmallmatrix} I_p \\ 0\end{bsmallmatrix}=[u_1,u_2,\ldots,u_p]
%	\]
%	in the sense that $\N{\sin\Theta(U^{(n)}, U_*)}_{\UI}\to 0$ as $n\to\infty$.
%	%		\item the algorithm coincides with Oja and Karhunen's algorithm, as long as each $U^{(n)}$ is orthonormal, because
%	%			\begin{align*}
%	%				U^{(n)}
%	%				&= U^{(n-1)}+\beta \wtd\alpha^{(n)} X^{(n)}(Z^{(n)})^{\T} - \tfrac{1-\wtd\alpha^{(n)}}{(Z^{(n)})^{\T}Z^{(n)}}U^{(n-1)}Z^{(n)}(Z^{(n)})^{\T}		
%	%				\\ &=\left( U^{(n-1)}+\beta X^{(n)}(Z^{(n)})^{\T} \right)\left(I-\tfrac{1-\wtd\alpha^{(n)}}{(Z^{(n)})^{\T}Z^{(n)}}Z^{(n)}(Z^{(n)})^{\T}\right)
%	%				.
%	%			\end{align*}
%\end{enumerate}
%\end{remark}

Notations introduced in this section, except those in Lemma~\ref{lm:basic} will be adopted throughout the rest of this paper.

\section{Main  Results} \label{sec:main-results}
For convenience, we first review our setting.
Let
$
\bX=[\bX_1, \bX_2, \ldots, \bX_d]^{\T}
$
be a random vector in $\bbR^d$. Assume $\txs\E{\bX}=0$. Its covariance matrix
$\Sigma:=\txs\E{\bX\bX^{\T}}$ has the spectral decomposition
\begin{equation}\label{eq:eigD-convar}
\Sigma=U\Lambda U^{\T}
\quad\text{with}\quad
U=[u_1,u_2,\ldots,u_d],\,\,
\Lambda=\diag(\lambda_1,\dots,\lambda_d),
\end{equation}
where $U\in\bbR^{d\times d}$ is orthogonal, and $\lambda_i$ for $1\le i\le d$ are the eigenvalues of $\Sigma$,
arranged for convenience in  non-increasing order.
Assume
\begin{equation}\label{eq:eigv-assume}
\lambda_1\ge\dots\ge\lambda_p>\lambda_{p+1}\ge\dots\ge\lambda_d>0.
\end{equation}
Given $\{X^{(1)},\dots,X^{(n)}\}$ in a sequential order, the proposed subspace online PCA iteration \cref{eq:our-onlinePCA-9v} is used to compute the principal subspace $U^{(n)}$ of dimension $p$ to estimate
\begin{equation}\label{eq:calUp}
{\cal U}_*=\cR(U_{(:,1:p)})=\cR([u_1,u_2,\ldots,u_p]).
\end{equation}

Our major result on the convergence rate of the subspace online PCA iteration in \Cref{alg:onlinePCA:ppr1} states as follows:
if the initial guess $U^{(0)}$ is randomly chosen to satisfy that $\cR(U^{(0)})$ is uniformly sampled from $\Grassmann_p(\bbR^d)$,
and the stepsize $\beta_i^{(n)}$
is chosen the same for $1\le i\le p$ and in accordance with the sample size $n$,
then there exists a high-probability event $\bbH_*$ with $\txs\prob{\bbH_*}\ge 1- 2\delta^{p^2}$, such that%
%\footnote{
%	We can also give a detailed form
%	\[
%		\txs\E{\N{{\tan\Theta(U^{(n)},U_*)}}_{\F}^2\given \bbH_*}
%		\le C(d,n,\delta)\tfrac{\ln n}{n}\tfrac{1}{\lambda_p-\lambda_{p+1}}\txs\sum\limits_{j=1}^{p}\txs\sum\limits_{i=p+1}^{d} \tfrac{\lambda_j\lambda_i}{\lambda_j-\lambda_i},
%	\]
%	which also degenerates to the detailed form of \cref{eq:result-p1} when $p=1$.
%}
\begin{subequations}\label{eq:result-p}
	\begin{align}
	\txs\E{\N{{\tan\Theta(U^{(n)},U_*)}}_{\F}^2\given \bbH_*}
	&\le C(d,n,\delta)\tfrac{\ln n}{n}\tfrac{1}{\lambda_p-\lambda_{p+1}}\txs\sum\limits_{j=1}^{p}\txs\sum\limits_{i=p+1}^{d} \tfrac{\lambda_j\lambda_i}{\lambda_j-\lambda_i}
	\label{eq:result-p-a} \\
	&\le C(d,n,\delta)\tfrac{\lambda_p\lambda_{p+1}}{(\lambda_p-\lambda_{p+1})^2}\tfrac{ p(d-p)\ln n}{n}		,
	\label{eq:result-p-b}
	\end{align}
\end{subequations}
where the constant $C(d,n,\delta)\to 24\psi^4/(1-\delta^{p^2})$
as $d\to\infty$ and $n\to\infty$, and $\psi$ is $\bX$'s Orlicz-$\psi_2$ norm (see Definition~\ref{dfn:orlicz-norm} below). %(defined below).
%\note{
This also matches the minimax lower bound \cref{eq:minimax-bound}
up to a logarithmic factor of $n$, and hence is \emph{nearly optimal} and \emph{nearly global} for the subspace online PCA, in the same way as \eqref{eq:result-p1} of \LWLZ~\cite{liWLZ2017near} for
the vector online PCA. Both are valid for any sub-Gaussian distribution.
%}

Comparing \cref{eq:result-p} and \cref{eq:result-p1}, we find that \cref{eq:result-p1} becomes the special case of our results \cref{eq:result-p} in the case of $p=1$. Unfortunately, the proving technique in \cite{liWLZ2017near} used for the one-dimensional case ($p=1$) is not generalizable to the multi-dimensional case ($p>1$).
%Indeed, we tried but did not succeed, due to the insurmountable obstacles.
More detail will be forthcoming in \cref{sec:comparison}.
%To overcomes these obstacles, we invent a new proving technique to deal with the obstacles for the multi-dimensional case.

%Although this result of ours look like a straightforward
%generalization, its proof, however, turns out to be %nontrivially
%much more complicated.
We also note that the factor in our result is
\[
\tfrac{\lambda_p\lambda_{p+1}}{(\lambda_p-\lambda_{p+1})^2}
\quad\text{\emph{vs.}}\quad
\tfrac{\lambda_1\lambda_{p+1}}{(\lambda_p-\lambda_{p+1})^2}.
\]
%rather than $$
%, the commonly seen minimax rate
The second quantity appeared in \cref{eq:common-rate}.
The first quantity is always smaller but both are of similar order if $\lambda_1$ and $\lambda_p$ are
of similar order. However, their magnitude can differ greatly
%a little tighter if $\lambda_1$ and $\lambda_p$ are of the same order, namely $\lambda_p$
%is a classical principal component;
%while it is much tighter
when $\lambda_p\ll \lambda_1$. %, namely $\lambda_p$ is not a significant principal component.

\subsection{Three Main Theorems}

In this subsection, we will state our three main theorems of the paper for the multi-dimensional case and \cref{eq:result-p} is a consequence of them. Before that, we will introduce necessary definitions and assumptions. We would like to point out that any statement we will make is meant to hold \emph{almost surely}.

We are concerned with random variables/vectors that have  a sub-Gaussian distribution.
To that end, we need to introduce the Orlicz $\psi_\alpha$-norm of a random variable/vector.
More details can be found in \cite{vaartW1996weak}.

\begin{definition}\label{dfn:orlicz-norm}
	The \emph{Orlicz $\psi_\alpha$-norm\/} of a random variable $\bX\in \bbR$ is defined as
	\[
	\N{\bX}_{\psi_\alpha}:=\inf\txs\set*{\xi>0\setdelim \txs\E{\exp\left( \abs*{\tfrac{\bX}{\xi}}^\alpha \right)}\le2},
	\]
	and the \emph{Orlicz $\psi_\alpha$-norm\/} of a random vector $\bX\in \bbR^d$ is defined as
	\[
	\N{\bX}_{\psi_\alpha}:=\sup\limits_{\N{v}_2=1}\N{v^{\T}\bX}_{\psi_\alpha}.
	%\N{\bX}_{\psi_\alpha}:=\sup\limits_{\N{v}_2=1}\inf\txs\set*{\xi>0\setdelim \txs\E{\exp\left( \abs*{\tfrac{v^{\T}\bX}{\xi}}^\alpha \right)}\le2}.
	\]
	We say random variable/vector $\bX$ follows a \emph{sub-Gaussian distribution\/} if $\N{\bX}_{\psi_2}<\infty$.
\end{definition}

By the definition, %we conclude that
any bounded random variable/vector follows a sub-Gaussian distribution.
To prepare our convergence analysis, we make a few assumptions.

%\newlist{asmenum}{enumerate}{1}
%\setlist[asmenum]{label=(A-\arabic*),ref=A-\arabic*,leftmargin=2\parindent}
%\crefalias{asmenumi}{equation}

%\begin{assumption} \label{asm:simplify:X}
%	$\bX=[\bX_1, \bX_2, \ldots, \bX_d]^{\T}\in\bbR^d$ is a random vector.
%	\begin{asmenum}[series=asm]
%		\item \label{asm:simplify:X:moment}
%		$\txs\E{\bX}=0$, and $\Sigma:=\txs\E{\bX\bX^{\T}}$ has the spectral decomposition \cref{eq:eigD-convar} satisfying
%		\cref{eq:eigv-assume};
%		%\item there exists $\wtd\lambda_1 >0$ such that $\N{\bX}_2^2\le \wtd\lambda_1 $ a.s..
%		%\item $\psi:=\N{\Lambda^{-1/2}\bX}_{\psi_2}<\infty$.
%		\item \label{asm:simplify:X:subGaussian}
%		$\psi:=\N{\Sigma^{-1/2}\bX}_{\psi_2}<\infty$.
%	\end{asmenum}
%\end{assumption}

\begin{assumption} \label{asm:simplify:X}
	$\bX=[\bX_1, \bX_2, \ldots, \bX_d]^{\T}\in\bbR^d$ is a random vector.
	\begin{enumerate}[label=(A-\arabic*),ref=(A-\arabic*),leftmargin=3\parindent]
		\item \label{asm:simplify:X:moment}
		$\txs\E{\bX}=0$, and $\Sigma:=\txs\E{\bX\bX^{\T}}$ has the spectral decomposition \cref{eq:eigD-convar} satisfying
		\cref{eq:eigv-assume};
		%\item there exists $\wtd\lambda_1 >0$ such that $\N{\bX}_2^2\le \wtd\lambda_1 $ a.s..
		%\item $\psi:=\N{\Lambda^{-1/2}\bX}_{\psi_2}<\infty$.
		\item \label{asm:simplify:X:subGaussian}
		$\psi:=\N{\Sigma^{-1/2}\bX}_{\psi_2}<\infty$.
	\end{enumerate}
\end{assumption}

%Recall that $\Sigma=U\Lambda U^{\T}$ is the spectral decomposition of $\Sigma$, as is mentioned in \cref{sec:introduction}.
The principal subspace $\cU_*$ in \cref{eq:calUp} is uniquely determined under \cref{asm:simplify:X:moment} of \Cref{asm:simplify:X}.
On the other hand, \cref{asm:simplify:X:subGaussian} of \Cref{asm:simplify:X}  ensures that all 1-dimensional marginals of $\bX$ have
sub-Gaussian tails, or equivalently,
$\bX$ follows a sub-Gaussian distribution.
This is also an assumption that is used in \cite{liWLZ2017near}.

In what follows, we will state our main results under the assumption and leave their proofs to \cref{sec:proof-of-cref-thm-thm1-ppr1,sec:proof-of-cref-thm-thm2-ppr1} because of their high complexity.
To that  end, first we introduce some quantities:
\begin{itemize}
	\item
the eigenvalue gap
%\begin{equation}\label{eq:gamma-dfn}
$
\gamma:=\lambda_p-\lambda_{p+1},
$
%\end{equation}
\item
	the sum of the top $i$ eigenvalues
$
\eta_i:=\lambda_1+\dots+\lambda_i,\quad
i=1,\dots,d,
$
\item
	the dominance of the top $i$ eigenvalues
%\begin{equation}\label{eq:nu-dfn}
$
\mu_i:%=\tfrac{\lambda_1}{\trace(\Sigma)}
=\tfrac{\eta_i}{\eta_d}
%=\tfrac{\txs\sum\limits_{i=1}^{p} \lambda_i}{\txs\sum\limits_{i=1}^{d} \lambda_i}
\in \left[\tfrac{i}{d}, 1\right],
$
%\end{equation}
%Write %$\eta=\eta_p$ and $\mu:=p\nu_1/\nu_p$.
%$\nu_i:=1/\mu_i$.
\item
	for $s>0$ and the stepsize $\beta<1$ such that $\beta\gamma<1$,  integer function
\begin{equation}\label{eq:Ns-dfn}
		N_s(\beta):=\min\txs\set*{n\in \mathbb{N}\setdelim (1-\beta\gamma)^n\le \beta^s}
		= \ceil*{\tfrac{s\ln\beta }{\ln (1-\beta\gamma)}},
\end{equation}
where $\ceil*{\cdot}$ is the ceiling function taking the smallest integer that is no smaller than its argument,
\item and finally,  for $0<\varepsilon<1/7$, integer function
%\begin{equation}\label{eq:Meps-dfn}
	\[
		M(\varepsilon)
		:=\min\txs\set*{m\in \mathbb{N}\setdelim \beta^{7\varepsilon/2-1/2}\le\beta^{(1-2^{1-m})(3\varepsilon-1/2)}}
		= 2+\ceil*{\tfrac{\ln \tfrac{1/2-3\varepsilon}{\varepsilon}}{\ln 2}}\ge 2.
	\]
%\end{equation}
\end{itemize}
In practice, it is always desirable to use a good initial guess in an iterative method whenever there is one available
because
it  positively affects computational efficiency in reducing the number
of iterations required to achieve an approximation within a prescribed tolerance.
% and sometimes is even a deciding factor to dictate whether the ensuing iteration converges or not.
On the other hand, when there isn't one known,
a randomly chosen initial guess is often taken.
Our first main result  in
\Cref{thm:thm1:ppr1} covers the case when a somewhat good initial subspace
$U^{(0)}$ is available whereas our second main result  in \Cref{thm:thm2:ppr1} is about using a randomly chosen initial subspace.

%%%%%%%position:<<<< thm:thm1:ppr1
\begin{theorem}\label{thm:thm1:ppr1}
	%\marginpartiny{ do you also need to insert $1/\gamma$ into $\{...\}$ in \cref{eq:beta-bound}?
	%\answer It is not necessary, because
	%$
	%\beta
	%<\left( \tfrac{1}{8\kappa p\lambda_1 } \right)^{\tfrac{2}{1-4\varepsilon}}$
	%and then
	%$
	%\beta
	%<\beta^{(1-4\varepsilon)/2}
	%<\tfrac{1}{8\kappa p\lambda_1 }
	%<\tfrac{1}{\lambda_1 }
	%<\tfrac{1}{\lambda_p }
	%<\tfrac{1}{\lambda_p-\lambda_{p+1} }
	%=\tfrac{1}{\gamma}
	%.
	%$
	%\answer $\kappa>6\sqrt{2}$ is clear from \cref{eq:omega-kappa-bound}.
	%}
	Given
	%\begin{subequations}%\label{eq:beta-d-kappa-bound}
		%\label{eq:omega-kappa-bound}
		$\varepsilon\in(0,1/7)$,
		$\omega\in(0,1)$,
		$\phi>0$,
and $\kappa$ and $\beta$ satisfying
%		$
%		\kappa >6^{[M(\varepsilon)-1]/2}\max\txs\set{\sqrt{2},\linebreak 2(\sqrt{2}-1)^{1/2}\phi \lambda_1^{-1/2}\omega^{1/2}}
%		$,
%	and
		\begin{equation}\label{eq:beta-bound}
\kappa >6^{[M(\varepsilon)-1]/2}\max\txs\set{\sqrt{2},\linebreak 2(\sqrt{2}-1)^{1/2}\phi \lambda_1^{-1/2}\omega^{1/2}},\,\,
		0< \beta < \min\txs\set*{1,
			\left( \tfrac{1}{8\kappa \eta_p } \right)^{\tfrac{2}{1-4\varepsilon}},
			\left(\tfrac{\gamma}{130\kappa ^2\eta_p ^2}\right)^{\tfrac{1}{\varepsilon}}
		}
		.
		\end{equation}
	%\end{subequations}
	Let $U^{(n)}$ for $n=1,2,\dots$ be the approximations of $U_*$ generated by \emph{\Cref{alg:onlinePCA:ppr1}}.
	Under \emph{\Cref{asm:simplify:X}},
	%if $U^{(0)}\in\sphere(\phi d^{1/2})$,
	if
\begin{equation}\label{eq:initial-guess}
\N{\tan \Theta(U^{(0)}, U_*)}_2^2\le \phi^2 d-1,
\end{equation}
and
	\begin{equation}\label{eq:beta-d-bound}
	(\sqrt{2}+1)\lambda_1d\beta^{1-7\varepsilon}\le \omega,
	\quad
	K>N_{3/2-37\varepsilon/4}(\beta),
	\end{equation}
	then there exist absolute
	constants\footnote {We attach each with a subscript for the convenience of indicating
		their associations. They don't change as the values of the subscript variables
		vary, by which we mean \emph{absolute constants}. Later in
		\cref{eq:C}, we explicitly bound these absolute constants.}
	$C_\psi$, $C_{\nu}$, $C_{\circ}$ and a high-probability event $\bbH$ with
	\begin{equation}\label{eq:prob-bbH}
		\txs\prob{\bbH}\ge 1-
		K[(2+\ee)d+ p+1]\exp(-C_{\nu\psi}\beta^{-\varepsilon})
	\end{equation}
	such that for any $n\in [N_{3/2-37\varepsilon/4}(\beta),K]$
\begin{equation}\label{eq:main-result-thm1}
	\txs\E{\N{\tan\Theta(U^{(n)},U_*)}_{\F}^2\overevent \bbH}
	\le(1-\beta\gamma)^{2(n-1)}p\phi^2d
	+\tfrac{32\psi^4\beta}{2-\lambda_1 \beta}\varphi(p,d;\Lambda)
	 +C_{\circ}\kappa ^4\mu_p^{-2}\eta_p ^2\gamma^{-1}p\sqrt{d-p}\beta^{3/2-7\varepsilon},
\end{equation}
%	\begin{align}
%	\txs\E{\N{\tan\Theta(U^{(n)},U_*)}_{\F}^2\overevent \bbH}
%	&\le(1-\beta\gamma)^{2(n-1)}p\phi^2d
%	+\tfrac{32\psi^4\beta}{2-\lambda_1 \beta}\varphi(p,d;\Lambda) \nonumber \\
%	&\quad +C_{\circ}\kappa ^4\mu_p^{-2}\eta_p ^2\gamma^{-1}p\sqrt{d-p}\beta^{3/2-7\varepsilon},
%	\label{eq:main-result-thm1}
%	\end{align}
	where $\ee=\exp(1)$ is Euler's number, $C_{\nu\psi}=\max\txs\set{C_{\nu} \mu_p,C_\psi\min\txs\set{\psi^{-1},\psi^{-2}}}$,
	and%\footnote {To see the inclusion in \cref{eq:varphi}, we note the following: if
		%$0\le a\le c<d\le b$, then
		%\[
		%0\le\tfrac 1b\le\tfrac 1d<\tfrac 1c\le\tfrac 1a
		%\,\,\Rightarrow\,\,
		%\tfrac {dc}{d-c}=\tfrac 1{\tfrac 1c-\tfrac 1d}\ge\tfrac 1{\tfrac 1a-\tfrac 1b}=\tfrac {ab}{b-a}.
		%\]
	%}
	\begin{equation}\label{eq:varphi}
	\varphi(p,d;\Lambda)
	:= \sum_{j=1}^{p}\sum_{i=p+1}^{d} \frac{\lambda_j\lambda_i}{\lambda_j-\lambda_i}
	\in \left[\frac{p(d-p)\lambda_1\lambda_d}{\lambda_1-\lambda_d}, \frac{p(d-p)\lambda_p\lambda_{p+1}}{\lambda_p-\lambda_{p+1}}\right].
	\end{equation}
\end{theorem}

\begin{remark}\label{rk:main-1}
%{\rm
	\begin{enumerate}
		\item
Although an interval is presented in \cref{eq:varphi}  to bound $\varphi(p,d;\Lambda)$, there are more informative ones
under additional assumptions on random vector $\bX$. For example,
in some of the past works \cite{allenzhuL2017first,2015arXiv150103796B,2016arXiv160206929J,2015arXiv150909002S,aroraCLS2012stochastic,aroraCS2013stochastic,marinovMA2018streaming,mianjyA2018stochastic}, it is assumed
$\sum_{i=1}^d \lambda_i=\E{\N{\bX}_2^2}\le c$ for some constant $c$, independent of dimension $d$. Then
\begin{equation}\label{eq:psi-bd-by-tr}
\varphi(p,d;\Lambda)=\sum_{j=1}^{p}\sum_{i=p+1}^{d} \frac{\lambda_j\lambda_i}{\lambda_j-\lambda_i}
   \le\frac 1{\lambda_p-\lambda_{p+1}}\sum_{j=1}^{p}\sum_{i=p+1}^{d} \lambda_j\lambda_i
   \le\frac 1{\gamma}\left(\sum_{j=1}^{p}\lambda_j\right)\left(c-\sum_{i=1}^{p}\lambda_i\right)
   \le\frac {c^2}{4\gamma}.
\end{equation}
%\begin{equation}\label{eq:psi-bd-by-tr}
%	\begin{aligned}[t]
%\varphi(p,d;\Lambda)=\sum_{j=1}^{p}\sum_{i=p+1}^{d} \frac{\lambda_j\lambda_i}{\lambda_j-\lambda_i}
%   &\le\frac 1{\lambda_p-\lambda_{p+1}}\sum_{j=1}^{p}\sum_{i=p+1}^{d} \lambda_j\lambda_i\\
%   &\le\frac 1{\gamma}\left(\sum_{j=1}^{p}\lambda_j\right)\left(c-\sum_{i=1}^{p}\lambda_i\right)
%   \le\frac {c^2}{4\gamma}. %\nonumber
%	\end{aligned}
%\end{equation}
As a result, the second term in the right-hand side of \cref{eq:main-result-thm1} is of $O(\beta)$.
%\note{
Under the same assumption, after a careful check (of Appendix~\ref{ssec:estimation-in-proof-of-lemma-lm}), 
%}
the third term can be ensured of $O(\beta)$, too, by making $7\varepsilon\le 1/2$.
Both terms do not go to $0$ as $n\to\infty$, as we would like to  ideally have. Nonetheless, we argue that it doesn't diminish the usefulness of the error bound.
Here is why. Like in any iterative method, the ultimate goal is to drive approximation error down to a prescribed level. Since the terms are of $O(\beta)$, given a prescribed error tolerance, we can always take the stepsize $\beta$ in the same order of the tolerance to yield an eventual approximation to the subspace within the desired error level.
\item %\note{
      Theorem~\ref{thm:thm1:ppr1} involves a  set of pre-chosen constant parameters:
      $\epsilon$, $\omega$, $\phi$, $\kappa$, and $\beta$ subject to the inequalities in \eqref{eq:beta-bound} so that $K[(2+\operatorname{e})d+ p+1]\exp(-C_{\nu\psi}\beta^{-\varepsilon})$ is sufficiently tiny to make $\mathbb H$ a high probability event. For that reason
      $n$ is limited to no bigger than $K$. Ideally, the event $\mathbb H$ should exist
      with high probability for all sufficiently large $n$.
According to our proof, the theorem remains valid with simply setting $K$ to $n$:
		\begin{equation}\tag{\ref{eq:prob-bbH}'}
	n>N_{3/2-37\varepsilon/4}(\beta),
	\qquad
	\operatorname{P}(\mathbb H)\ge 1-
	n[(2+\operatorname{e})d+ p+1]\exp(-C_{\nu\psi}\beta^{-\varepsilon}),
		\end{equation}
everything else being equal. This means with any given constant parameters, there is no guarantee that
$\mathbb H$ is still a high probability event if $n$ is too large. While this is not ideal,
we argue that if the number $n$ of samples or some rough range of it is known, we can always optimize these constant parameters,
by making $\beta$ small enough, so that $n[(2+\operatorname{e})d+ p+1]\exp(-C_{\nu\psi}\beta^{-\varepsilon})$ is still tiny to render
a high probability event $\mathbb H$. For example, in Theorem 4.5, we specify what is needed
on the constant parameters.
We point out in passing that the results in \LWLZ\ \cite{liWLZ2017near} for the vector online PCA also require that the number $n$ of samples be bounded from above.

One subtlety in bounding $\operatorname{P}(\mathbb H)$ from below as in (\ref{eq:prob-bbH}') is that now the event
$\mathbb H$ depends on $n$. Theorem 4.2 as stated with the preset $K$ ensures
one high probability event $\mathbb H$ for all $n\in [N_{3/2-37\varepsilon/4}(\beta),K]$.
From the practical point of view, the number of samples is always finite, i.e., such a $K$ does exist, and
one might have some idea about what it is. When we do, the constant parameters can be judiciously chosen to ensure $K[(2+\operatorname{e})d+ p+1]\exp(-C_{\nu\psi}\beta^{-\varepsilon})$ tiny.
%		The reader may doubt why \cref{eq:main-result-thm1} only holds for any $n$ smaller than some finite $K$,
%		for in most cases in the non-asymptotic analysis, a bound is usually derived which holds for any sufficiently large $n$.
%		The main reason is that  $K$ plays the same role in \cref{eq:main-result-thm1} as the common $n$, namely the number of samples/iterations.
%		Generally speaking, the form becomes usual by a direct substitution $K=n$ in \cref{eq:beta-d-bound,eq:prob-bbH}:
%		\begin{align*}
%	n>N_{3/2-37\varepsilon/4}(\beta),
%	\qquad
%		\txs\prob{\bbH}\ge 1-
%		n[(2+\ee)d+ p+1]\exp(-C_{\nu\psi}\beta^{-\varepsilon})
%		.
%		\end{align*}
%		We use the current form to demonstrate that the first term in the right-hand side of \cref{eq:main-result-thm1} converges linearly with a \emph{fixed} high-probability.
	%}
\item
This remark applies to \Cref{thm:thm2:ppr1} later as well.
	\end{enumerate}
%}
\end{remark}

\Cref{thm:thm1:ppr1} assumes a somewhat accurate initial subspace  $U^{(0)}$, i.e.,
satisfying \cref{eq:initial-guess} which isn't very restrictive because $\phi^2d-1$ can be very big for huge $d$.
%$U^{(0)}\in\sphere(\phi d^{1/2})$.
%\[
%\N{\tan \Theta(U^{(0)}, U_*)}_2^2\le \phi^2 d-1.
%\]
As we mentioned earlier, often we don't have a good initial subspace, in which case,
we may simply resort to
a randomly selected $U^{(0)}$.
%
%By $\Grassmann_p(\bbR^d)$ denote the Grassmann manifold of $p$-planes in $\bbR^d$, or equivalently, the set that consists of all $p$-dimensional subspaces of $\bbR^d$.
%Write $\Grassmann\equiv\Grassmann_p(\bbR^d)$.
Consider the uniform distribution on $\Grassmann_p(\bbR^d)$, the one with the Haar invariant probability measure
(see \cite[Section~1.4]{chikuse2003statistics} and \cite[Section~4.6]{james1954normal}).
We are interested in a randomly selected $U^{(0)}$ such that
\begin{equation}\label{eq:random-U0}
%\framebox{
\boxed{
	\parbox{7.0cm}{
		\; $\cR(U^{(0)})$ is uniformly sampled from $\Grassmann_p(\bbR^d)$.
	}
}
\end{equation}
The reader is referred to \cite[Section~2.2]{chikuse2003statistics} on how to generate such a uniform distribution
on $\Grassmann_p(\bbR^d)$.

%A close link between \Cref{thm:thm1:ppr1} and \Cref{thm:thm2:ppr1} hides in their proofs.
%Specifically, we will prove \Cref{thm:thm2:ppr1}
%by first showing that, with a high probability, a random initial guess as specified by \cref{eq:random-U0} will
%satisfy the approximation accuracy requirement
%\cref{eq:initial-guess}
%and then immediately citing \Cref{thm:thm1:ppr1}.

%%%%%%%position:<<<< thm:thm2:ppr1
\begin{theorem}\label{thm:thm2:ppr1}
	Under \emph{\Cref{asm:simplify:X}},
	for sufficiently large $d$ and any $\beta$ satisfying \cref{eq:beta-bound}
	with 
	\[
		\kappa =6^{[M(\varepsilon)-1]/2}\max\txs\set{2C_p,\sqrt{2}},
	\]
	and
	\[
	p<(d+1)/2,\quad
	\varepsilon\in(0,1/7),\quad
	\delta\in(0,2^{-1/p^2}),\quad
	K>N_{3/2-37\varepsilon/4}(\beta),
	\]
	where $C_p$ is a constant only dependent on $p$,
	if \cref{eq:random-U0} holds,
	and
	\[
	d\beta^{1-3\varepsilon}\le \delta^2,
	\,\,
	K[(2+\ee)d+ p+1]\exp(-C_{\nu\psi}\beta^{-\varepsilon})
	\le\delta^{p^2},
	\]
	then there exists a high-probability event $\bbH_*$ with
	$\txs\prob{\bbH_*}\ge 1- 2\delta^{p^2}$
	such that
\begin{equation}\label{eq:main-result-thm2}
	\txs\E{\N{\tan\Theta(U^{(n)},U_*)}_{\F}^2\overevent \bbH_*}
	\le (1-\beta\gamma)^{2(n-1)}p\,C_p^2\delta^{-2}d
	+\tfrac{32\psi^4\beta}{2-\lambda_1 \beta}\varphi(p,d;\Lambda)
	+C_{\circ}\kappa ^4\mu_p^{-2}\eta_p ^2\gamma^{-1}p\sqrt{d-p}\beta^{3/2-7\varepsilon}
\end{equation}
%	\begin{multline}
%	\txs\E{\N{\tan\Theta(U^{(n)},U_*)}_{\F}^2\overevent \bbH_*}
%	\le (1-\beta\gamma)^{2(n-1)}p\,C_p^2\delta^{-2}d
%	+\tfrac{32\psi^4\beta}{2-\lambda_1 \beta}\varphi(p,d;\Lambda) \\
%	+C_{\circ}\kappa ^4\mu_p^{-2}\eta_p ^2\gamma^{-1}p\sqrt{d-p}\beta^{3/2-7\varepsilon}		
%	\label{eq:main-result-thm2}
%	\end{multline}
	for any $n\in [N_{3/2-37\varepsilon/4}(\beta),K]$,
	where $\varphi(p,d;\Lambda)$ is as in \cref{eq:varphi}.
\end{theorem}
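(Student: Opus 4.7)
The plan is to reduce \Cref{thm:thm2:ppr1} to \Cref{thm:thm1:ppr1}. The only hypothesis of \Cref{thm:thm1:ppr1} not automatically satisfied in the random-initialization setting of \Cref{thm:thm2:ppr1} is the deterministic bound $\N{\tan\Theta(U^{(0)},U_*)}_2^2\le \phi^2 d-1$. I would therefore isolate an ``initial-good'' event $\bbA$ on which that bound holds with $\phi = C_p/\delta$, show $\prob{\bbA}\ge 1-\delta^{p^2}$, apply \Cref{thm:thm1:ppr1} conditionally to obtain an event $\bbH$ with $\prob{\bbH}\ge 1-\delta^{p^2}$, and set $\bbH_*=\bbH\cap\bbA$.

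For the initial-angle bound, rotational invariance of the Haar measure on $\Grassmann_p(\bbR^d)$ reduces to the case $U_* = \begin{bmatrix} I_p \\ 0 \end{bmatrix}$. Represent $U^{(0)} = G(G^{\T}G)^{-1/2}$ with $G\in\bbR^{d\times p}$ having i.i.d.\ $N(0,1)$ entries; by \Cref{lm:Tan(Theta)},
\[
\N{\tan\Theta(U^{(0)},U_*)}_2 = \N{\scrT(U^{(0)})}_2 = \N{G_{(p+1:d,:)}\,G_{(1:p,:)}^{-1}}_2,
\]
the orthogonal factor $(G^{\T}G)^{-1/2}$ having cancelled. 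The factor $\N{G_{(p+1:d,:)}}_2$ is $\OO(\sqrt{d})$ with probability close to one by standard Gaussian operator-norm concentration (using $p<(d+1)/2$ so that the matrix is tall), while $1/\sigma_{\min}(G_{(1:p,:)})$ is controlled by a tail estimate derived from the joint density of principal angles under the Haar measure on the Grassmannian, yielding a constant $C_p$ depending only on $p$ with
\[
\prob{\N{\tan\Theta(U^{(0)},U_*)}_2^2 > C_p^2 \delta^{-2}d - 1}\le \delta^{p^2}.
\]
Let $\bbA$ denote the complementary event, so $\prob{\bbA}\ge 1-\delta^{p^2}$.

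On $\bbA$ the hypothesis of \Cref{thm:thm1:ppr1} holds with $\phi = C_p/\delta$. Pick $\omega\in(0,1)$ to simultaneously satisfy $(\sqrt{2}+1)\lambda_1 d\beta^{1-7\varepsilon}\le \omega$ and $2(\sqrt{2}-1)^{1/2}\phi\lambda_1^{-1/2}\omega^{1/2}\le 2C_p$ (the latter, after substituting $\phi=C_p/\delta$, is $\omega\le \lambda_1\delta^2/(\sqrt{2}-1)$); the hypothesis $d\beta^{1-3\varepsilon}\le \delta^2$ of \Cref{thm:thm2:ppr1} (after absorbing the numerical constants) ensures such $\omega$ exists, and the choice $\kappa = 6^{[M(\varepsilon)-1]/2}\max\set{2C_p,\sqrt{2}}$ then fulfils \cref{eq:omega-kappa-bound}. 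The remaining preconditions of \Cref{thm:thm1:ppr1} (the conditions \cref{eq:beta-bound} on $\beta$ and $K>N_{3/2-37\varepsilon/4}(\beta)$) are part of the hypotheses of \Cref{thm:thm2:ppr1}. \Cref{thm:thm1:ppr1} then produces an event $\bbH$ with
\[
\prob{\bbH}\ge 1 - K[(2+\ee)d+p+1]\exp(-C_{\nu\psi}\beta^{-\varepsilon})\ge 1-\delta^{p^2},
\]
by the second numerical hypothesis of \Cref{thm:thm2:ppr1}, on which \cref{eq:main-result-thm1} holds with leading term $p\phi^2 d = pC_p^2\delta^{-2}d$.

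Setting $\bbH_*=\bbH\cap\bbA$ gives $\prob{\bbH_*^{\rmc}}\le\prob{\bbH^{\rmc}}+\prob{\bbA^{\rmc}}\le 2\delta^{p^2}$, and since $\bbH_*\subseteq\bbH$, $\E{Y\overevent\bbH_*}\le\E{Y\overevent\bbH}$ for any nonnegative $Y$, delivering \cref{eq:main-result-thm2}. The principal obstacle is the $\delta^{p^2}$ tail in the initial bound: a loose tail like $\OO(\delta)$, which a single smallest-singular-value estimate would supply, is insufficient, so one must exploit the sharper joint concentration of the $p$ canonical angles under the Haar measure (equivalently, control a high-order moment of the Cauchy-type matrix $G_{(p+1:d,:)}G_{(1:p,:)}^{-1}$) rather than union-bounding over entries of $G_{(1:p,:)}$.
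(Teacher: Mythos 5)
Your overall reduction coincides with the paper's: isolate the good initial event, invoke \Cref{thm:thm1:ppr1} with $\phi=C_p\delta^{-1}$ and $\omega$ of order $\lambda_1\delta^2$ (so that the $\kappa$-condition in \cref{eq:omega-kappa-bound} becomes $\kappa=6^{[M(\varepsilon)-1]/2}\max\set{2C_p,\sqrt2}$), and intersect the two events to reach $1-2\delta^{p^2}$. The genuine gap is the initial-probability estimate itself, namely $\prob{\N{\tan\Theta(U^{(0)},U_*)}_2^2>C_p^2\delta^{-2}d-1}\le\delta^{p^2}$, which you assert but never prove. In the paper this is exactly \Cref{lm:lm1:ppr1}, and it is the substantive new content of the proof of \Cref{thm:thm2:ppr1} for $p>1$: it is extracted from the exact distribution of the largest canonical angle of a Haar-random $p$-plane (Absil--Edelman--Koev), rewritten through the reflection formula for the Gaussian hypergeometric function of matrix argument established in \Cref{lm:2F1}, and then subjected to an asymptotic analysis of ratios of multivariate Gamma functions as $d\to\infty$ to isolate the $\delta^{p^2}$ factor. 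Your sketch replaces all of this with a citation-free appeal to ``the joint density of principal angles,'' which is precisely the estimate the authors say they could not find in the literature and had to build.

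Moreover, the two devices you gesture at would not close the gap. The event is governed by the spectral norm of $\tan\Theta(U^{(0)},U_*)$, hence by the largest canonical angle alone, i.e.\ by $\sigma_{\min}$ of the top block ($\sigma_{\min}(G_{(1:p,:)})$ in your Gaussian model after the $(G^{\T}G)^{-1/2}$ factor cancels); ``joint concentration of the $p$ canonical angles'' therefore has no direct bearing on it. And a Markov/moment bound on the Cauchy-type matrix $G_{(p+1:d,:)}G_{(1:p,:)}^{-1}$ strong enough to yield a $\delta^{p^2}$ tail would require finite negative moments $\E{\sigma_{\min}(G_{(1:p,:)})^{-r}}$ with $r$ of order $p^2\ge 4$, which diverge, since the small-deviation probability $\prob{\sigma_{\min}(G_{(1:p,:)})\le\epsilon}$ of a square Gaussian block is of order $\epsilon$ (only moments of order $r<1$ exist). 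So the routes you outline stall exactly at the step where the paper's Lemmas \ref{lm:2F1} and \ref{lm:lm1:ppr1} do the work; without an argument of that kind (or an explicit substitute for \cref{eq:lm1:ppr1}), the proposal does not constitute a proof of the theorem.
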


Our third main result is about picking a nearly optimal stepsize $\beta$ for nearly optimal convergence rate, assuming
the sample size is reasonably large and fixed at $N_*$.
The idea is to pick a good $\beta$ to balance the terms in
the right-hand side of \cref{eq:main-result-thm2} subject to $N_*\ge N_{3/2}(\beta)$ (thus we also
need a large enough number of samples). The nearly optimal stepsize $\beta$ is
%
%Finally, suppose that the number of principal components $p$ and the eigenvalue gap $\gamma=\lambda_p-\lambda_{p+1}$ is known in advance,
%and the sample size is fixed at $N_*$.
%We must choose a proper $\beta$ to obtain the principal components as accurately as possible.
%A good choice turns out to be
\begin{equation}\label{eq:beta-opt}
\beta=\beta_*:=\tfrac{3\ln N_*}{2\gamma N_*},
\end{equation}
which is consistent with the choice in \cite{liWLZ2017near} for $p=1$.
%\Blue{
%Here it is nearly the best choice to make
%$N_*$ as small as possible while the condition, $K>N_{3/2-37\varepsilon/4}(\beta)$, of \Cref{thm:thm2:ppr1} is satisfied. In fact,
%$\beta_*$ is chosen so that $N_*\ge N_{3/2}(\beta)$.
%}

%where $t>0$ is used to control $\varepsilon$.
%as is used by \Cref{thm:thm3:ppr1} below.
%\textcolor{red}{Todo: Is this $\beta$ a best choice? Should we use this $\beta$ rather than setting $\beta$ as a parameter in Algorithm? If it is, then we can highlight this as a contribution in Algorithm section. Also, it is better to provide empirical estimation of $\beta$ if $\gamma$ and $N_*$ are unknown.}

%%%%%%%position:<<<< thm:thm3:ppr1
\begin{theorem}\label{thm:thm3:ppr1}
	Under \emph{\Cref{asm:simplify:X}},
	for sufficiently large  $d\ge 2p$  and a sufficiently large number $N_*$ of samples,
	%$t>0$,
	$\varepsilon\in(0,1/7)$,
	$\delta\in(0,2^{-1/p^2})$
	satisfying
	\begin{equation}\label{eq:cond-beta}
	d\beta_*^{1-3\varepsilon}\le \delta^2,
	\,\,
	N_* [(2+\ee)d+ p+1]\exp(-C_{\nu\psi}\beta_*^{-\varepsilon})
	\le\delta^{p^2},
	\end{equation}
	where $\beta_*$ is given by \cref{eq:beta-opt},
	if \cref{eq:random-U0} holds,
	then there exists a high-probability event $\bbH_*$ with
	$\txs\prob{\bbH_*}\ge 1- 2\delta^{p^2}$, such that	
	\begin{equation}\label{eq:main-result-thm3}
	\txs\E{\N{{\tan\Theta(U^{(N_*)},U_*)}}_{\F}^2\overevent \bbH_*}
	\le C_*(d,N_*,\delta)\tfrac{\varphi(p,d;\Lambda)}{\lambda_p-\lambda_{p+1}}\tfrac{\ln N_*}{N_*}		,
	\end{equation}
	where the constant $C_*(d,N_*,\delta)\to 24\psi^4$
	as $d\to\infty, N_*\to\infty$,
	and $\varphi(p,d;\Lambda)$ is as in \cref{eq:varphi}.
\end{theorem}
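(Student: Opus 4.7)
The plan is to derive Theorem 4.3 as a calibration of Theorem 4.2 at the specific stepsize $\beta=\beta_*=\frac{3\ln N_*}{2\gamma N_*}$ with horizon $K=N_*$ and evaluation index $n=N_*$. First I would verify that the hypotheses of Theorem 4.2 are satisfied when $N_*$ is large. Since $\beta_*=\Theta(\ln N_*/N_*)\to 0$, the three upper bounds in \cref{eq:beta-bound} are met for all sufficiently large $N_*$, and $\beta_*\gamma<1$ holds. The assumption \cref{eq:cond-beta} transcribes verbatim the two quantitative conditions of Theorem 4.2. The remaining hypothesis $N_*>N_{3/2-37\varepsilon/4}(\beta_*)$ needs an asymptotic estimate: using $\ln(1-\beta_*\gamma)=-\beta_*\gamma+O(\beta_*^2)$ in \cref{eq:Ns-dfn}, one obtains
\[
N_s(\beta_*)=\ceil*{\frac{s\ln\beta_*}{\ln(1-\beta_*\gamma)}}
= \frac{2sN_*}{3\ln N_*}\bigl(\ln(2\gamma N_*/3)-\ln\ln N_*\bigr)\bigl(1+o(1)\bigr)
= \tfrac{2s}{3}N_*\bigl(1+o(1)\bigr),
\]
which is strictly below $N_*$ for $s=3/2-37\varepsilon/4$ once $N_*$ is large.

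Next I would invoke \cref{eq:main-result-thm2} at $n=N_*$ and analyze each of the three terms after factoring out the target normalizer $\varphi(p,d;\Lambda)\ln N_*/(\gamma N_*)$. For the exponential term, the bound $(1-\beta_*\gamma)^{2(N_*-1)}\le\exp(-3\ln N_*\cdot(N_*-1)/N_*)$ gives decay of order $N_*^{-3}$, so its ratio to $\varphi\ln N_*/(\gamma N_*)$ is of order $d/(\varphi N_*^2\ln N_*)$; since the lower bound in \cref{eq:varphi} yields $\varphi\ge p(d-p)\lambda_1\lambda_d/(\lambda_1-\lambda_d)$, this ratio tends to $0$ uniformly in $d$ as $N_*\to\infty$. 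For the middle term, direct substitution gives
\[
\frac{32\psi^4\beta_*}{2-\lambda_1\beta_*}\varphi(p,d;\Lambda)
=\frac{48\psi^4}{2-\lambda_1\beta_*}\cdot\frac{\varphi(p,d;\Lambda)\ln N_*}{\gamma N_*},
\]
whose coefficient converges to $24\psi^4$ as $\beta_*\to 0$; this is what supplies the stated limit. The third term is $C_\circ\kappa^4\mu_p^{-2}\eta_p^2\gamma^{-1}p\sqrt{d-p}\,\beta_*^{3/2-7\varepsilon}$. Dividing by the normalizer and using $\varphi\ge cp(d-p)$ yields a factor $\beta_*^{1/2-7\varepsilon}/\sqrt{d-p}\cdot(\ln N_*)^{-1}N_*$, and since $\beta_*^{1/2-7\varepsilon}=(\ln N_*/N_*)^{1/2-7\varepsilon}(\text{const})$, this ratio is $O(N_*^{-(1/2-7\varepsilon)}(\ln N_*)^{1/2-7\varepsilon-1}/\sqrt{d-p})$, which tends to $0$ as $N_*\to\infty$ for any $\varepsilon<1/14$, uniformly in $d\ge 2p$.

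Setting $C_*(d,N_*,\delta)$ to be the sum of the three ratios just computed completes the claim \cref{eq:main-result-thm3}; the limit $C_*(d,N_*,\delta)\to 24\psi^4$ comes from the middle term, while the first and third contribute $o(1)$. The main obstacle I anticipate is book-keeping — ensuring that the hypotheses \cref{eq:omega-kappa-bound}–\cref{eq:beta-d-bound} of Theorem 4.2 (parameters $\omega$, $\phi$, $\kappa$, together with $\N{\tan\Theta(U^{(0)},U_*)}_2^2\le\phi^2d-1$ replaced by the random‐initialization event in Theorem 4.2 based on \cref{eq:random-U0}) are simultaneously compatible with $\beta=\beta_*$, $\varepsilon\in(0,1/7)$, and \cref{eq:cond-beta}. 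The delicate point is $\varepsilon<1/14$ (which is needed for the decay of Term~3 after the $\sqrt{d-p}$ factor is absorbed); since any $\varepsilon\in(0,1/7)$ is admissible in Theorem 4.2, one may tighten the argument by simply taking, at the end of the proof, any $\varepsilon<1/14$ for which the hypotheses of Theorem 4.2 and \cref{eq:cond-beta} remain simultaneously satisfiable, which they are for sufficiently large $N_*$.
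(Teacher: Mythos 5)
Your proposal is correct and follows essentially the same route as the paper's proof: invoke \Cref{thm:thm2:ppr1} at $\beta=\beta_*$ with $K=N_*$ and $n=N_*$ (after checking \cref{eq:beta-bound} and $N_*>N_{3/2-37\varepsilon/4}(\beta_*)$ asymptotically, which the paper does via $(1-\beta_*\gamma)^{N_*}\le\beta_*^{3/2}$), then divide the three terms of \cref{eq:main-result-thm2} by $\varphi(p,d;\Lambda)\gamma^{-1}\ln N_*/N_*$ and read off the limit $48\psi^4/(2-\lambda_1\beta_*)\to24\psi^4$ from the middle term. Two bookkeeping remarks. First, in your first-term ratio you dropped the factor $\delta^{-2}$: that term is $(1-\beta_*\gamma)^{2(N_*-1)}p\,C_p^2\delta^{-2}d$, and since $\delta$ may shrink with $d$ and $N_*$ you cannot claim uniform decay without using the first condition of \cref{eq:cond-beta} to get $\delta^{-2}d\le\beta_*^{3\varepsilon-1}$ — exactly the paper's step — after which the ratio is $O(\beta_*^{1+3\varepsilon})$ and the conclusion stands. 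Second, your observation that the third term vanishes only when $\varepsilon<1/14$ is a genuine subtlety: the paper's own proof silently bounds $\beta_*^{1/2-7\varepsilon}$ by $1$, which likewise requires $1/2-7\varepsilon\ge0$, so restricting $\varepsilon$ (or claiming only boundedness of that summand for $\varepsilon\in[1/14,1/7)$) is the honest repair; but note that replacing the given $\varepsilon$ by a smaller $\varepsilon'$ is not automatic, since the second condition in \cref{eq:cond-beta} is not monotone in $\varepsilon$ (decreasing $\varepsilon$ weakens the exponential), and $\kappa$ and $C_\circ$ also depend on $\varepsilon$; it does hold for sufficiently large $N_*$ because the first condition bounds $d$ polynomially in $N_*$, but this re-verification should be stated rather than waved at.
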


In \Cref{thm:thm1:ppr1,,thm:thm2:ppr1,thm:thm3:ppr1}, the conclusions are stated
in term of the expectation of $\N{\tan\Theta(U^{(n)},U_*)}_{\F}^2$
over some highly probable event. These expectations can be turned into conditional expectations, thanks to the relation
\cref{eq:expectation-over2cond}. In fact, \cref{eq:result-p} is a consequence of \cref{eq:main-result-thm3} and
\cref{eq:expectation-over2cond}.

\subsection{Discussions of New Proving Techniques}\label{sec:comparison}
Our three theorems in the previous section, namely \Cref{thm:thm1:ppr1,,thm:thm2:ppr1,thm:thm3:ppr1}, are the analogs for $p>1$
of \LWLZ's three theorems \cite[Theorems~1, 2, and~3]{liWLZ2017near} which are for $p=1$ only. Naturally,
we would like to know how our results when applied to the case $p=1$  and our proofs would stand against
those in \cite{liWLZ2017near}.
%\note{
We choose to compare our results with those in \cite{liWLZ2017near}
because \LWLZ\ \cite{liWLZ2017near}
dealt with sub-Gaussian samples whereas other existing works
in the literature studied the vector/subspace online PCA for bounded samples only.
%}
In what follows, we will do a fairly detailed comparison. %But
Before we do that,
let us state their theorems (in our notation).
%\note{remove some restatement?}
% and compare our results with them one by one,
%and then give some remarks on the assumptions.

\begin{theorem}[{\cite[Theorem~1]{liWLZ2017near}}]\label{thm:thm1:ppr1-LWLZ}
	Under \emph{\Cref{asm:simplify:X}} and $p=1$, suppose there exists a constant $\phi>1$ such that  $\tan\Theta(U^{(0)},U_*)\le \phi^2d$.
	Let
	\begin{alignat*}{2}
	\what N^{\rmo}(\beta,\phi)&:=\min\txs\set*{n\in \mathbb{N}\setdelim (1-\beta\gamma)^n\le [4\phi^2d]^{-1}}
	&&= \ceil*{\tfrac{-\ln[4\phi^2d] }{\ln (1-\beta\gamma)}},
	\\
	\what N_s(\beta)&:=\min\txs\set*{n\in \mathbb{N}\setdelim (1-\beta\gamma)^n\le [\lambda_1^2\gamma^{-1}\beta]^s}
	&&= \ceil*{\tfrac{s\ln[\lambda_1^2\gamma^{-1}\beta] }{\ln (1-\beta\gamma)}}.
	\end{alignat*}
	Then for any $\varepsilon\in(0,1/8)$, stepsize $\beta>0$ satisfying $d[\lambda_1^2\gamma^{-1}\beta]^{1-2\varepsilon}\le b_1\phi^{-2}$, and
	any $t>1$,
	there exists an event $\bbH$ with
	\[
	\txs\prob{\bbH}\ge 1-2(d+2)\what N^{\rmo}(\beta,\phi)\exp\left( -C_0[\lambda_1^2\gamma^{-1}\beta]^{-2\varepsilon} \right)-4d\what N_t(\beta)\exp\left( -C_1[\lambda_1^2\gamma^{-1}\beta]^{-2\varepsilon} \right),
	\]
	such that for any $n\in [\what N_1(\beta)+\what N^{\rmo}(\beta,\phi),\what N_t(\beta)]$
\begin{equation}\label{eq:main-result-thm1-LWLZ}
	\txs\E{\tan^2\Theta(U^{(n)},U_*) \overevent \bbH}
	\le (1-\beta\gamma)^{2[n-\what N^{\rmo}(\beta,\phi)]} + C_2\beta\varphi(1,d;\Lambda) + C_2 \txs\sum\limits_{i=2}^{d}\tfrac{\lambda_1-\lambda_2}{\lambda_1-\lambda_i}[\lambda_1^2\gamma^{-1}\beta]^{3/2-4\varepsilon},
\end{equation}
%	\begin{multline}\label{eq:main-result-thm1-LWLZ}
%	\txs\E{\tan^2\Theta(U^{(n)},U_*) \overevent \bbH}
%	\le (1-\beta\gamma)^{2[n-\what N^{\rmo}(\beta,\phi)]} + C_2\beta\varphi(1,d;\Lambda)\\  + C_2 \txs\sum\limits_{i=2}^{d}\tfrac{\lambda_1-\lambda_2}{\lambda_1-\lambda_i}[\lambda_1^2\gamma^{-1}\beta]^{3/2-4\varepsilon},
%	\end{multline}
	where $b_1\in(0,\ln^2, 2/16)$, $C_0,C_1,C_2$ are  absolute constants.%, and
\end{theorem}

What we can see that  \Cref{thm:thm1:ppr1} for $p=1$ is essentially the same as
\Cref{thm:thm1:ppr1-LWLZ}. In fact, since $(1-\beta\gamma)^{1-\what N^{\rmo}(\beta,\phi)}\le4\phi^2d\le (1-\beta\gamma)^{-\what N^{\rmo}(\beta,\phi)}$,
the upper bounds by \cref{eq:main-result-thm1} for $p=1$ and by \cref{eq:main-result-thm1-LWLZ}
are comparable in the sense that they are in the same order in $d,\beta,\delta$.
Naturally one may try to
generalize the proving techniques in \cite{liWLZ2017near} which is for the one-dimensional case ($p = 1$)
to handle the multi-dimensional case ($p > 1$). %so that \Cref{asm:lowdim:X} becomes unnecessary.
Indeed, we tried but didn't succeed, due to we believe there insurmountable obstacles.
In fact, one of the key steps in proof works for $p=1$ does not seem to work for $p>1$.
Next, we explain these obstacles in details.

%We found that the latter approach is a very difficult thing to do, if not impossible.
The basic structure of the proof in \cite{liWLZ2017near} is to split the Grassmann manifold $\Grassmann_p(\bbR^d)$,
where the initial guess comes from, into two regions: the \emph{cold region} and \emph{warm region}.
Roughly speaking, an approximation $U^{(n)}$ in the warm region means that $\N{\tan\Theta(U^{(n)}, U_* )}_{\F}$ is small while
it in the cold region means that $\N{\tan\Theta(U^{(n)}, U_* )}_{\F}$ is not that small.
$U_*$ sits at the ``\emph{center}'' of the warm region which is wrapped around by the cold region.
The proof is divided into two cases: the first case is when the initial guess is in the warm region
and the other one is when it is in the cold region. For the first case,
they proved that the algorithm will produce a sequence convergent to the principal subspace
(which is actually the most significant principal component because it is for $p=1$) with high probability. For the second case,
they first proved that the algorithm will produce a sequence of approximations that,
after a finite number of iterations, will fall into the warm region with high probability, and
then use the conclusion proved for the first case to conclude the proof due to %because of
the Markov property.

For our situation $p>1$, we still structure our proof in the same way, i.e., dividing the whole proof into
two cases: $U^{(0)}$ coming from the \emph{cold region} or \emph{warm region}.
The proof in \cite{liWLZ2017near} for the warm region case can be carried over with a little extra effort,
as we will see later,
but it was not possible for us to use a similar argument in \cite{liWLZ2017near} to get the job done
for the cold region case.
Three major difficulties
are as follows.
\begin{enumerate}
	\item In \cite{liWLZ2017near}, essentially $\N{\cot\Theta(U^{(n)}, U_* )}_{\F}$
	was used to track the behavior of a martingale along with the power iteration. Note
	$\cot\Theta(U^{(n)}, U_* )$ is $p\times p$. Thus it is a scalar when $p=1$, perfectly well-conditioned if
	treated as a matrix, but for $p>1$, it is a genuine matrix and, in fact, an inverse of a random matrix in the proof.
	The first difficulty is how to estimate the inverse because it may not even exist!
	
	\item We  tried to
	separate the flow of $U^{(n)}$  into two subflows: the ill-conditioned flow and the well-conditioned flow,
	and estimate the related quantities separately.
	Here the ill-conditioned flow at each step represents the subspace generated by the singular vectors  of $\cot\Theta(U^{(n)}, U_* )$
	whose corresponding singular values are tiny, while
	the well-conditioned flow at each step represents the subspace generated by the other singular vectors,
	of which the inverse (restricted to this subspace) is well conditioned.
	Unfortunately,
	tracking the two flows can be an impossible task because, due to the randomness,
	some elements in the ill-conditioned flow could jump to the well-conditioned flow during the iteration, and vice versa.
	
	\item The third one is to build a martingale to go along  with a proper power iteration, or equivalently,
	to find the Doob decomposition of the process,
	because the recursion formula of the main part of the inverse --- the drift in the Doob decomposition,  even if limited to the well-conditioned flow,
	is not a linear operator, which makes it impossible to build a proper power iteration.
\end{enumerate}

In the end, to deal with the cold region, we gave up the idea of estimating $\N{\cot\Theta(U^{(n)}, U_* )}_{\F}$.
Instead,
%
%In order to overcome these obstacles, we use another method .
%Rather than consider $\cot\Theta(U^{(n)}, U_*)$,
we invented another method:
cutting the cold region into many layers, each wrapped around by another  with
the innermost one around the warm region.
We prove the initial guess in any layer will produce a sequence of approximations that will fall into its inner neighbor layer
(or the warm region if the layer is innermost) in a finite number of iterations with high probability.
Therefore eventually, any initial guess in the cold region will lead to an approximation
in the warm region within a finite number of iterations with high probability, returning to the case of
initial guesses coming from the warm region because of the Markov property.
This enables us to completely avoid the difficulties mentioned above.
%
%Then we combine these to reach the theorem according to the Markov property.
%Through this technique, we can
This technique works for $p=1$, too, and it can result in a simpler proof for the online PCA than that  in \cite{liWLZ2017near}. %if with \Cref{asm:lowdim:X}.

The other two main theorems of \LWLZ~\cite[Theorems 2 and~3]{liWLZ2017near} are stated as follows.

\begin{theorem}[{\cite[Theorem~2]{liWLZ2017near}}]\label{thm:thm2:ppr1-LWLZ}
	Under \emph{\Cref{asm:simplify:X}}  and $p=1$, suppose that $U^{(0)}$ is uniformly sampled from the unit sphere.
	Then for any $\varepsilon\in(0,1/8)$, stepsize $\beta>0,\delta>0$ satisfying
	\[
	d[\lambda_1^2\gamma^{-1}\beta]^{1-2\varepsilon}\le b_2\delta^2,
	\,\,
	4d\what N_2(\beta)\exp\left( -C_3[\lambda_1^2\gamma^{-1}\beta]^{-2\varepsilon} \right)\le \delta,
	\]
	there exists an event $\bbH_*$ with $\txs\prob{\bbH_*}\ge 1-2\delta$
	such that for any $n\in [\what N_2(\beta),\what N_3(\beta)]$
\begin{equation}\label{eq:main-result-thm2-LWLZ}
	\txs\E{\tan^2\Theta(U^{(n)},U_*) \overevent \bbH_*}
	\le C_4(1-\beta\gamma)^{2n}\delta^{-4}d^2 + C_4\beta\varphi(1,d;\Lambda)  + C_4\txs\sum\limits_{i=2}^{d}\tfrac{\lambda_1-\lambda_2}{\lambda_1-\lambda_i}[\lambda_1^2\gamma^{-1}\beta]^{3/2-4\varepsilon},
\end{equation}
%	\begin{multline}\label{eq:main-result-thm2-LWLZ}
%	\txs\E{\tan^2\Theta(U^{(n)},U_*) \overevent \bbH_*}
%	\le C_4(1-\beta\gamma)^{2n}\delta^{-4}d^2 + C_4\beta\varphi(1,d;\Lambda) \\  + C_4\txs\sum\limits_{i=2}^{d}\tfrac{\lambda_1-\lambda_2}{\lambda_1-\lambda_i}[\lambda_1^2\gamma^{-1}\beta]^{3/2-4\varepsilon},
%	\end{multline}
	where $b_2,C_3,C_4$ are  absolute constants.
\end{theorem}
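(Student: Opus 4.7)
The plan is to reduce to the deterministic-initialization result \Cref{thm:thm1:ppr1-LWLZ} by controlling how close the random starting vector $U^{(0)}\in\bbS^{d-1}$ can come to the hyperplane $u_*^{\perp}$. The key geometric input is that, if $U^{(0)}$ is uniform on $\bbS^{d-1}$, then $(u_*^{\T}U^{(0)})^2$ follows a $\mathrm{Beta}(1/2,(d-1)/2)$ distribution, so a one-line tail estimate yields a universal constant $c>0$ with
\[
  \prob{(u_*^{\T}U^{(0)})^2<c\,\delta^2/d}\le\delta.
\]
On the complementary event $\bbA$ one has $\tan^2\Theta(U^{(0)},U_*)=1/(u_*^{\T}U^{(0)})^2-1\le \phi^2 d-1$ with $\phi=1/(\sqrt{c}\,\delta)$. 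This event depends only on $U^{(0)}$ and is independent of the sample stream; the first hypothesis $d[\lambda_1^2\gamma^{-1}\beta]^{1-2\varepsilon}\le b_2\delta^2$ is precisely what is needed to guarantee that the deterministic initialization condition $d[\lambda_1^2\gamma^{-1}\beta]^{1-2\varepsilon}\le b_1\phi^{-2}$ of \Cref{thm:thm1:ppr1-LWLZ} holds with this choice of $\phi$.

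Next I would invoke \Cref{thm:thm1:ppr1-LWLZ} for each fixed $U^{(0)}\in\bbA$: it supplies a sample-stream event $\bbH_{U^{(0)}}$, measurable with respect to $X^{(1)},\dots,X^{(\what N_3(\beta))}$, with $\prob{\bbH_{U^{(0)}}^{\rmc}\given U^{(0)}}\le\delta$ under the second hypothesis $4d\,\what N_2(\beta)\exp(-C_3[\lambda_1^2\gamma^{-1}\beta]^{-2\varepsilon})\le\delta$ (the $2(d+2)\what N^{\rmo}(\beta,\phi)\exp(\cdots)$ contribution is of the same order once $\phi\asymp 1/\delta$ is absorbed via the first hypothesis). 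Setting $\bbH_*:=\{U^{(0)}\in\bbA\}\cap\bbH_{U^{(0)}}$, the tower property gives $\prob{\bbH_*}\ge(1-\delta)^2\ge 1-2\delta$. Because the bound in \cref{eq:main-result-thm1-LWLZ} depends on $U^{(0)}$ only through the scalar $\phi$, it is uniform over $\bbA$, so averaging out $U^{(0)}$ costs nothing. The first term $(1-\beta\gamma)^{2[n-\what N^{\rmo}(\beta,\phi)]}$ is then converted via $(1-\beta\gamma)^{-\what N^{\rmo}(\beta,\phi)}\le 4\phi^2 d=O(d/\delta^2)$ into the advertised $C_4(1-\beta\gamma)^{2n}\delta^{-4}d^2$, while the two noise terms carry through verbatim.

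The main obstacle I anticipate is bookkeeping of the time windows: the target window $[\what N_2(\beta),\what N_3(\beta)]$ must contain $[\what N_1(\beta)+\what N^{\rmo}(\beta,\phi),\what N_3(\beta)]$ once $\phi\asymp 1/\delta$ is substituted, which requires checking $\what N^{\rmo}(\beta,1/\delta)\le \what N_2(\beta)-\what N_1(\beta)$ via careful manipulation of the logarithmic definitions of $\what N_s$ and using the first hypothesis to control $\phi^2 d$. Beyond this arithmetic, the Beta tail bound is classical and the invocation of the previous theorem is essentially mechanical, so the reduction becomes short once the window verification is in place.
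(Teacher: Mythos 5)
Your reduction is correct and is essentially the same route as the source proof and as this paper's own proof of its $p>1$ analogue (\Cref{thm:thm2:ppr1}): a high-probability bound on the quality of the random initialization (your $\mathrm{Beta}(1/2,(d-1)/2)$ tail is exactly the $p=1$ specialization of \Cref{lm:lm1:ppr1}, giving $\phi\asymp\delta^{-1}$), followed by a conditional application of the fixed-initialization theorem and multiplication of the two probabilities, with $(1-\beta\gamma)^{-\what N^{\rmo}(\beta,\phi)}\lesssim\phi^2d$ converting the decay term into $C_4(1-\beta\gamma)^{2n}\delta^{-4}d^2$. The only items to tidy are trivial: $(1-\beta\gamma)^{-\what N^{\rmo}(\beta,\phi)}\le 4\phi^2d/(1-\beta\gamma)$ rather than $4\phi^2d$ (absorbed into $C_4$), and the window check is the containment $[\what N_1(\beta)+\what N^{\rmo}(\beta,\phi),\what N_3(\beta)]\supseteq[\what N_2(\beta),\what N_3(\beta)]$, i.e.\ $\what N^{\rmo}(\beta,\phi)\le\what N_2(\beta)-\what N_1(\beta)$, which your first hypothesis indeed delivers.
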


\begin{theorem}[{\cite[Theorem~3]{liWLZ2017near}}]\label{thm:thm3:ppr1-LWLZ}
	Under \emph{\Cref{asm:simplify:X}}  and $p=1$, suppose that $U^{(0)}$ is uniformly sampled from the unit sphere
	and let $\beta_*=\tfrac{2\ln N_*}{\gamma N}$.
	Then for any $\varepsilon\in(0,1/8),\, N_*\ge 1,\,\delta>0$ satisfying
	\[
	d[\lambda_1^2\gamma^{-1}\beta_*]^{1-2\varepsilon}\le b_3\delta^2,
	\,\,
	4d\what N_2(\beta_*)\exp\left( -C_6[\lambda_1^2\gamma^{-1}\beta_*]^{-2\varepsilon} \right)\le \delta,
	\]
	there exists an event $\bbH_*$ with $\txs\prob{\bbH_*}\ge 1-2\delta$
	such that
	\begin{equation}\label{eq:main-result-thm3-LWLZ}
	\txs\E{\tan^2\Theta(U^{(N_*)},U_*) \overevent \bbH_*}
	\le C_*(d,N_*,\delta)\tfrac{\varphi(1,d;\Lambda)}{\lambda_1-\lambda_2}\tfrac{\ln N_*}{N_*},
	\end{equation}
	where the constant $C_*(d,N_*,\delta) \to C_5$ as $d\to\infty,N_*\to\infty$,
	and $b_3,C_5,C_6$ are  absolute constants.
\end{theorem}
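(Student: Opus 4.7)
The plan is to deduce \Cref{thm:thm3:ppr1-LWLZ} from \Cref{thm:thm2:ppr1-LWLZ} by specializing to the stepsize $\beta_*=\frac{2\ln N_*}{\gamma N_*}$. First I would verify that with this $\beta_*$, the index $N_*$ lies in the admissible interval $[\what N_2(\beta_*),\what N_3(\beta_*)]$ required by \Cref{thm:thm2:ppr1-LWLZ}. A direct calculation gives $\beta_*\gamma=\frac{2\ln N_*}{N_*}$, so for large $N_*$, $-\ln(1-\beta_*\gamma)=\beta_*\gamma+O(\beta_*^2\gamma^2)=\frac{2\ln N_*}{N_*}(1+o(1))$, while $\ln[\lambda_1^2\gamma^{-1}\beta_*]=-\ln N_*+\ln(2\lambda_1^2\gamma^{-2}\ln N_*)=-\ln N_*\,(1+o(1))$. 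Hence
\[
	\what N_s(\beta_*)=\frac{s\ln[\lambda_1^2\gamma^{-1}\beta_*]}{\ln(1-\beta_*\gamma)}(1+o(1))=\frac{s}{2}N_*\,(1+o(1)),
\]
so $\what N_2(\beta_*)\le N_*\le \what N_3(\beta_*)$ for $N_*$ sufficiently large. The two smallness conditions in \Cref{thm:thm2:ppr1-LWLZ}, namely $d[\lambda_1^2\gamma^{-1}\beta_*]^{1-2\varepsilon}\le b_2\delta^2$ and $4d\what N_2(\beta_*)\exp(-C_3[\lambda_1^2\gamma^{-1}\beta_*]^{-2\varepsilon})\le\delta$, are precisely the hypotheses imposed in \Cref{thm:thm3:ppr1-LWLZ} (up to relabeling of absolute constants $b_3,C_6$), so they hold by assumption.

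Next I would substitute $n=N_*$ and $\beta=\beta_*$ into the three-term bound \cref{eq:main-result-thm2-LWLZ} of \Cref{thm:thm2:ppr1-LWLZ} and analyze each term. For the first term, since $(1-\beta_*\gamma)^{2N_*}\le\exp(-2\beta_*\gamma N_*)=\exp(-4\ln N_*)=N_*^{-4}$, we get
\[
	C_4(1-\beta_*\gamma)^{2N_*}\delta^{-4}d^2\le C_4\delta^{-4}d^2 N_*^{-4}=o\!\left(\frac{\ln N_*}{N_*}\right)
\]
as $N_*\to\infty$ for fixed $d,\delta$. The middle term is the dominant one:
\[
	C_4\beta_*\varphi(1,d;\Lambda)=\frac{2C_4}{\gamma}\,\frac{\varphi(1,d;\Lambda)\ln N_*}{N_*}=\frac{2C_4\,\varphi(1,d;\Lambda)}{\lambda_1-\lambda_2}\,\frac{\ln N_*}{N_*},
\]
which already matches the form in \cref{eq:main-result-thm3-LWLZ}. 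For the third term, $[\lambda_1^2\gamma^{-1}\beta_*]^{3/2-4\varepsilon}=O((\ln N_*/N_*)^{3/2-4\varepsilon})$; since $\varepsilon<1/8$ forces $3/2-4\varepsilon>1$, this term is $o(\ln N_*/N_*)$, and the factor $\sum_{i=2}^d\frac{\lambda_1-\lambda_2}{\lambda_1-\lambda_i}\le d-1$ is bounded by $d$ which is fixed.

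Combining the three pieces, I would set
\[
	C_*(d,N_*,\delta):=\frac{N_*}{\varphi(1,d;\Lambda)\ln N_*}(\lambda_1-\lambda_2)\cdot[\text{RHS of \cref{eq:main-result-thm2-LWLZ}}]
\]
and read off that $C_*(d,N_*,\delta)\to 2C_4=:C_5$ as $d\to\infty$ and $N_*\to\infty$, with $b_3,C_5,C_6$ depending only on $b_2,C_3,C_4$ from \Cref{thm:thm2:ppr1-LWLZ}. The high-probability event $\bbH_*$ is the one supplied directly by \Cref{thm:thm2:ppr1-LWLZ}, so $\prob{\bbH_*}\ge 1-2\delta$ is immediate. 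The main obstacle I anticipate is the membership $N_*\in[\what N_2(\beta_*),\what N_3(\beta_*)]$: the gap $\what N_3(\beta_*)-\what N_2(\beta_*)\approx N_*/2$ is large, but the asymptotics rely on the two-sided estimate $-\ln(1-x)=x+O(x^2)$ near $0$ together with the logarithmic ratio $\ln[\lambda_1^2\gamma^{-1}\beta_*]/\ln(1-\beta_*\gamma)\to N_*/2$; making this precise at finite (sufficiently large) $N_*$ is where the quantitative bookkeeping must be done carefully to justify the word ``sufficiently large $N_*$'' in the statement. Everything else is algebraic rearrangement of the bound from \Cref{thm:thm2:ppr1-LWLZ}.
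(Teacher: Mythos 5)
Your route is the same one used for this family of results: the paper itself does not reprove this statement (it is quoted from \LWLZ\ for comparison), but its own analogue, \Cref{thm:thm3:ppr1}, is proved in exactly the way you propose --- specialize the fixed-horizon theorem (there \Cref{thm:thm2:ppr1}, here \Cref{thm:thm2:ppr1-LWLZ}) at $\beta=\beta_*$, check that $N_*$ lies in the admissible index window, take $\bbH_*$ to be the event supplied by that theorem, and observe that the middle term $C_4\beta_*\varphi(1,d;\Lambda)$ is the dominant one, which yields the $\frac{\ln N_*}{N_*}$ rate and $C_*\to 2C_4=:C_5$. Your verification that $N_*\in[\what N_2(\beta_*),\what N_3(\beta_*)]$ for large $N_*$ is sound, and the displayed hypotheses force $\beta_*$ small, hence $N_*$ large, so ``sufficiently large'' is consistent with the statement.

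One piece of bookkeeping is looser than it should be: you dismiss the first and third terms of \cref{eq:main-result-thm2-LWLZ} ``for fixed $d,\delta$,'' but the asserted convergence $C_*(d,N_*,\delta)\to C_5$ is a joint limit $d\to\infty$, $N_*\to\infty$, in which $d$ (and $\delta^{-2}$) may grow with $N_*$. To make those terms uniformly negligible you need the standing hypothesis $d[\lambda_1^2\gamma^{-1}\beta_*]^{1-2\varepsilon}\le b_3\delta^2$ to control $\delta^{-4}d^2$ against $(1-\beta_*\gamma)^{2N_*}\le N_*^{-4}$, and the lower bound $\varphi(1,d;\Lambda)\ge (d-1)\frac{\lambda_1\lambda_d}{\lambda_1-\lambda_d}$ so that the factor $\sum_{i=2}^{d}\frac{\lambda_1-\lambda_2}{\lambda_1-\lambda_i}\le d-1$ in the third term is absorbed by the normalization $\varphi(1,d;\Lambda)\frac{\ln N_*}{N_*}$; this is precisely how the paper treats the analogous terms in its proof of \Cref{thm:thm3:ppr1} (the steps justified there ``by $d\beta_*^{1-3\varepsilon}\le\delta^2$'' and ``by $\varphi(p,d;\Lambda)\ge\frac{p(d-p)\lambda_1\lambda_d}{\lambda_1-\lambda_d}$''). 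With that substitution your argument is complete and matches the intended proof.
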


Our \Cref{thm:thm2:ppr1,thm:thm3:ppr1} when applied to the case $p=1$ do not exactly yield
\Cref{thm:thm2:ppr1-LWLZ,thm:thm3:ppr1-LWLZ}, respectively. But the resulting conditions and upper bounds
have the same orders in  constant parameters $d,\beta,\delta$, and the coefficients of $\beta$ and $\tfrac{\ln N_*}{N}$ in the upper bounds
are comparable.
Note that the first term in right-hand side of \cref{eq:main-result-thm2} is proportional to $d$, not $d^2$ as
in \cref{eq:main-result-thm2-LWLZ}; so ours is tighter for high-dimensional data.

Our proofs  for \Cref{thm:thm2:ppr1,thm:thm3:ppr1} are nearly the same as
those in \cite{liWLZ2017near} for \Cref{thm:thm2:ppr1-LWLZ,thm:thm3:ppr1-LWLZ} owing to the fact
that the difficult estimates have already been taken care of by either \Cref{thm:thm1:ppr1} or \Cref{thm:thm1:ppr1-LWLZ}.
But still there are some extras for $p>1$, namely, the need to
estimate the marginal probability for the uniform distribution on the Grassmann manifold of dimension higher than $1$.
We are not aware of anything like that in the literature, and thus have to build it ourselves with the help of
the theory of special functions of a matrix argument, rarely used in the statistical community.

It may also be worth pointing out that all absolute constants, except $C_p$ which has an explicit expression
in \cref{eq:Cp-dfn} and $C_{\psi}$, in our theorems will be explicitly  bounded as
%\marginpartiny{No concrete bound on $C_p$ of \Cref{thm:thm2:ppr1}.\answer It depends on $p$, and the expression is in Line~1010.}
in \cref{eq:C},
whereas those in \Cref{thm:thm1:ppr1-LWLZ,thm:thm2:ppr1-LWLZ,thm:thm3:ppr1-LWLZ} are not.

\section{Proof of Theorem \ref{thm:thm1:ppr1}}\label{sec:proof-of-cref-thm-thm1-ppr1}
We start by building a substantial amount of preparation material
in \cref{ssec:preparation,ssec:increments-in-one-iteration,,ssec:quasi-power-iteration-process} before
we prove the theorem in \cref{ssec:proof1st}.
%\Cref{fig:logical-dependences-in-the-proof-of-thm:name} shows a pictorial description of our proof process.
In \cref{ssec:preparation}, we set the stage and introduce matrix $T^{(n)}$ to
serve the role of $\tan\Theta(U^{(n)},U_*)$ associated with the $n$th approximation. In particular we have
$\N{T^{(n)}}_{\UI}=\N{\tan\Theta(U^{(n)},U_*)}_{\UI}$.
In \cref{ssec:increments-in-one-iteration}, we present incremental estimates for one iterative step  of the subspace online PCA
in Lemma~\ref{lm:diff-V} and Lemma~\ref{lm:diff-T}. These estimates
allow us to associate one iterative step  with a quasi-power iterative step by
an operator $\opL$ defined
at the beginning of \cref{ssec:quasi-power-iteration-process}, and then further we relate $T^{(n)}$  to $\opL^n T^{(0)}$
by showing $T^{(n)}-\opL^n T^{(0)}$ is bounded with high probability in Lemma~\ref{lm:prop3:ppr1}.
This lemma is very critical to our proofs. It leads to Lemma~\ref{lm:prop5:ppr1}
which says that $\N{T^{(n)}}_2$ stagedly decreases
and Lemma~\ref{lm:prop4:ppr1} in which the expectation of $T^{(n)}$ is estimated.
Finally, we will be ready to prove \Cref{thm:thm1:ppr1} in \cref{ssec:proof1st}.
\Cref{fig:logical-dependences-in-the-proof-of-thm:name} shows a pictorial
description of our proving process.

%Because of complicated technical details, we first give a brief description of the workflow.
%First, in \cref{ssec:preparation}, we will give the definition of matrix $T^{(n)}$ satisfying $\N{T^{(n)}}_{\UI}=\N{\tan\Theta(U^{(n)},U_*)}_{\UI}$,
%which will be considered later rather than $\tan\Theta(U^{(n)},U_*)$.
%Also, we will give some explanations on the conditions.
%Then, in \cref{ssec:increments-in-one-iteration}, we will analyze the increments in one iteration.
%In Lemma~\ref{lm:quasi-bounded}, we will show that $Y^{(n)}$ is quasi-bounded with high probability, although $Y^{(n)}$ itself may be unbounded;
%then, under the quasi-bounded condition, we will give the increments of $U^{(n)}$ and $T^{(n)}$ during the iteration process in Lemma~\ref{lm:diff-V} and Lemma~\ref{lm:diff-T}, respectively.
%After that, in \cref{ssec:quasi-power-iteration-process}, we will verify that the iteration process is a quasi-power iteration.
%In Lemma~\ref{lm:prop3:ppr1}, we will see that $T^{(n)}-\opL^n T^{(0)}$ is bounded with high probability for some operator $\opL$;
%this lemma implies Lemma~\ref{lm:prop5:ppr1}, which tells that $\N{T^{(n)}}_2$ can staged decrease in finite iterations,
%and also implies Lemma~\ref{lm:prop4:ppr1}, in which we will estimate the expectation of $T^{(n)}$.
%Finally by these two lemmas, we will be able to prove \Cref{thm:thm1:ppr1}.
%The dependencies of these lemmas and the theorem are clarified in \Cref{fig:logical-dependences-in-the-proof-of-thm:name}.

\begin{figure}[ht]
	\centering\scriptsize
	\begin{tikzpicture}
	[
	scale = 0.8,
	%every node/.style = draw
	]
	\node (qb) at (1,0) {Lemma~\ref{lm:quasi-bounded} };
	\node (diffV) at (1,-1) {Lemma~\ref{lm:diff-V} };
	\node (diffT) at (4,-1) {Lemma~\ref{lm:diff-T} };
	\node (prop3) at (7,-1) {Lemma~\ref{lm:prop3:ppr1} };
	\node (prop4) at (10,-1) {Lemma~\ref{lm:prop4:ppr1} };
	\node (prop5) at (10,-2) {Lemma~\ref{lm:prop5:ppr1} };
	\node (thm1) at (13,-2) {Proof of \Cref{thm:thm1:ppr1} };
	\graph[grow right sep] {
		{ (qb) -> (diffV), (qb) }
		-> (diffT) -> (prop3) -> { (prop5), (prop4) } -> (thm1);
		(qb) -> (prop4);
	};
	\node  at (2.5,-2) {\cref{ssec:increments-in-one-iteration} (one iterative step) };
	\node  at (8.5,0) {\cref{ssec:quasi-power-iteration-process} (quasi-power iteration) };
	\draw[dashed] (0,.5) rectangle (5.2,-1.5);
	\draw[dashed] (5.8,-.5) rectangle (11.2,-2.5);
	\end{tikzpicture}
	\caption{Proving process for \Cref{thm:thm1:ppr1}}
	\label{fig:logical-dependences-in-the-proof-of-thm:name}
\end{figure}

\subsection{Simplification}\label{ssec:preparation}
Without loss of generality, we may assume that the covariance matrix $\Sigma$ diagonal. Otherwise, we
can perform a (constant) orthogonal transformation as follows.
%In the very beginning, we make a rotation on the random vector $\bX$ to make the covariance matrix $\Sigma$ diagonal.
Recall the spectral decomposition $\Sigma=U\Lambda U^{\T}$ in \cref{eq:eigD-convar}. Instead of
the random vector $\bX$,  we  equivalently consider
\[
\bY\equiv [\bY_1,\bY_2,\ldots,\bY_n]^{\T}:=U^{\T}\bX.
\]
Accordingly, perform the same orthogonal transformation on all involved
quantities:
\begin{equation}\label{eq:YV-dfn}
Y^{(n)}=U^{\T}X^{(n)}, \quad V^{(n)}=U^{\T}U^{(n)}, \quad
V_*=U^{\T}U_*=\begin{bsmallmatrix} I_p \\ 0\end{bsmallmatrix}.
\end{equation}
As a consequence, we will have  equivalent versions of \Cref{alg:onlinePCA:ppr1}, \Cref{thm:thm1:ppr1,,thm:thm2:ppr1,thm:thm3:ppr1}.
Firstly, because
\[
(V^{(n-1)})^{\T}Y^{(n)}=(U^{(n-1)})^{\T}X^{(n)}=Z^{(n)}, \quad (Y^{(n)})^{\T}Y^{(n)}=(X^{(n)})^{\T}X^{(n)},
\]
the equivalent version of \Cref{alg:onlinePCA:ppr1} is obtained by symbolically
replacing
%\footnote {$Z^{(n)}$ of \Cref{alg:onlinePCA:ppr1}, although not explicitly replaced by a new notation, is implicitly changed to
%           represent $U^{\T}$(\mbox{$Z^{(n)}$  before transformation}).
%			{\answer the two $Z$ are exactly the same due to $V^{\T}Y=U^{\T}X$ in the equality in the last line of last page.}
%		}
all letters $X,\, U$ by
$Y,\,V$ while keeping their respective superscripts.
If the algorithm converges, it is expected that $\cR(V^{(n)})\to\cR(V_*)$.
Secondly, noting
\[
\N{\Sigma^{-1/2}\bX}_{\psi_2} = \N{U\Lambda^{-1/2}U^{\T}\bX}_{\psi_2} = \N{\Lambda^{-1/2}\bY}_{\psi_2},
\]
we can restate \Cref{asm:simplify:X} equivalently as
%we can restate \Cref{asm:simplify:X,asm:lowdim:X} equivalently as
%\newlist{asmenum}{enumerate}{1}

%\setlist[asmenum]{label=(A-\arabic*$'$),ref=A-\arabic*$'$}
%\crefalias{asmenumi}{equation}
\begin{enumerate}[label=(A-\arabic*$'$),ref=(A-\arabic*$'$),leftmargin=3\parindent]
	\item $\txs\E{\bY}=0,\txs\E{\bY\bY^{\T}}=\Lambda=\diag(\lambda_1,\dots,\lambda_d)$ with
	\cref{eq:eigv-assume}; %$\lambda_1\ge\dots\ge\lambda_p>\lambda_{p+1}\ge\dots\ge\lambda_d>0$;
	%\item there exists $\wtd\lambda_1 >0$ such that $\N{\bY}_2^2\le \wtd\lambda_1 $ a.s..
	%\item $\psi:=\N{\Lambda^{-1/2}\bY}_{\psi_2}<\infty$.
	\item $\psi:=\N{\Lambda^{-1/2}\bY}_{\psi_2}<\infty$.
	%\item $\trace(\Lambda)=1$.
	%\item $\N{(I_d-\Pi_{\mathcal{V}_*})\bY}_2^2\le (\nu-1) p\max\txs\set*{\N{\bY}_{\infty}^2,\lambda_1}$,
	%      where $1\le\nu $ is a constant, independent of $d$, and $\cV_*=\cR(V_*)$. \label{asm:lowdim}
\end{enumerate}
Thirdly, all canonical angles between two subspaces are invariant under the orthogonal transformation.
Therefore the equivalent versions of
\Cref{thm:thm1:ppr1,,thm:thm2:ppr1,thm:thm3:ppr1}
for $\bY$ can be simply obtained by replacing all letters $X,\, U$ by
$Y,\,V$ while keeping their respective superscripts.

%Note that
%\marginpartiny{ is this used anywhere? \answer no}
%$$
%\N{(I_d-\Pi_{\mathcal{V}_*})\bY}_2^2=\N{(I-V_*V_*^{\T})\bY}_2^2=\txs\sum\limits_{i=p+1}^d\abs{\bY_i}^2.
%$$

In what follows, we assume that $\Sigma$ is diagonal.
In the rest of this section, we will prove the mentioned equivalent version of \Cref{thm:thm1:ppr1}.
Likewise in the next section, we will prove the equivalent versions of
\Cref{thm:thm2:ppr1,thm:thm3:ppr1}.

%\marginpartiny{ after this line, all $Y,V$ should be changed back to $X,U$.\answer will not be changed due to the discussion}

To facilitate our proof, we introduce new notations for two particular submatrices of any $V\in\bbR^{d\times p}$:
\begin{equation}\label{eq:ol-ul}
\ol V=V_{(1:p,:)}, \quad \ul V=V_{(p+1:d,:)}.
\end{equation}
In particular, $\scrT(V)=\ul V\ol V^{-1}$ for the operator $\scrT$ defined in \cref{eq:scrT-dfn}, provided
$\ol V$ is nonsingular. Set
\begin{equation}\label{eq:ol-ul:Lambda}
\ol\Lambda=\diag(\lambda_1,\ldots,\lambda_p), \quad\ul\Lambda=\diag(\lambda_{p+1},\ldots,\lambda_d).
\end{equation}
Although the assignments to $\ol\Lambda$ and $\ul\Lambda$ are not consistent with the extractions defined by \cref{eq:ol-ul},
they don't seem to cause confusions in our later presentations.

For $\kappa>1$, define
$\sphere(\kappa ):=\txs\set{V\in\bbR^{d\times p}\setdelim \sigma(\ol V)\subset [\tfrac{1}{\kappa },1]}$,
where $\sigma(\ol V)$ is the set of the singular values of $\ol V$.
It can be verified that
\begin{equation}\label{eq:sphereK-normT}
V\in\sphere(\kappa )\Leftrightarrow \N{\scrT(V)}_2\le \sqrt{\kappa ^2-1}.
\end{equation}
For the sequence $V^{(n)}$, define
\[
\txs\Nout{\sphere(\kappa )}:=\min\txs\set{n\setdelim V^{(n)}\notin \sphere(\kappa )},\quad
\txs\Nin{\sphere(\kappa )}:=\min\txs\set{n\setdelim V^{(n)}\in \sphere(\kappa )}.
\]
$\txs\Nout{\sphere(\kappa )}$ is the first step of the iterative process at which $V^{(n)}$ jumps
from $\sphere(\kappa )$ to its outside, and
$\txs\Nin{\sphere(\kappa )}$ is the first step of the iterative process at which $V^{(n)}$ jumps
from the outside to $\sphere(\kappa )$.
%\marginpartiny{ $\{,\}$ in $\txs\Nqb{}$ too big \answer changed}
Write
\[
\wtd\lambda_i:=\lambda_i\beta^{-2\varepsilon},\quad
\wtd\eta_i:=\wtd\lambda_1+\dots+\wtd\lambda_i=\eta_i\beta^{-2\varepsilon},\quad
%\wtd\eta:=\wtd\eta_p,
\]
and define
\begin{equation}\label{eq:Nqb-dfn}
%\txs\Nqb{\Lambda}:= \min\txs\set*{n\ge1\setdelim \max\txs\set{\N{Y^{(n)}}_\infty,\N{Z^{(n)}}_\infty}\ge \wtd\lambda_1^{1/2}}.
%\txs\Nqb{\Lambda}:= \max\txs\set*{n\ge1\setdelim \N{Z^{(n)}}_\infty\le \wtd\lambda_1^{1/2},\abs{Y^{(n)}_i}\le \wtd\lambda_i^{1/2},i=1,\dots n}+1.
\txs\Nqb{\Lambda}:= \max\txs\set{n\ge1\setdelim \N{Z^{(n)}}_2\le \wtd\eta_p^{1/2},\abs{Y^{(n)}_i}\le \wtd\lambda_i^{1/2},i=1,\dots n}+1,
%\txs\Nqb{\Lambda}:= \max\txs\set*{n\ge1\setdelim \abs{Y^{(n)}_i}\le \wtd\lambda_i^{1/2},i=1,\dots n}+1.
\end{equation}
where $Z^{(n)}=(U^{(n-1)})^{\T}X^{(n)}$ as is defined in \Cref{alg:onlinePCA:ppr1}.
%\marginpartiny{ $\|\cdots\|_{\infty}$ is not the largest entry.\answer here $Y,Z$ are vectors. maybe absolute value should be mentioned.}
%$\txs\Nqb{\Lambda}$ is the first step of the iterative process at which an entry of $Y^{(n)}$ or $Z^{(n)}$ exceeds $\wtd\lambda_1$.
$\txs\Nqb{\Lambda}$ is the first step of the iterative process at which either $\abs{Y^{(n)}_i}>\wtd\lambda_i^{1/2}$ for some $i$
or the norm of $Z^{(n)}$ exceeds $\wtd\eta_p^{1/2}$.
%or the absolute value of an entry of $Z^{(n)}$ exceeds $\wtd\lambda_1^{1/2}$.
%It is obvious that
For $n<\txs\Nqb{\Lambda}$, we have
%\marginpartiny{ avoid ``it is obvious", ``it is clear", ``it is easy" as much as possible.\answer changed}
\[
%\N{Y^{(n)}}_2\le \wtd\eta_d^{1/2}=(\nu p\wtd\lambda_1)^{1/2},
%\quad
%\N{Z^{(n)}}_2\le (p\wtd\lambda_1)^{1/2},
\N{Y^{(n)}}_2\le \wtd\eta_d^{1/2}=\nu^{1/2}\wtd\eta_p^{1/2},
\quad
\N{Z^{(n)}}_2\le \wtd\eta_p^{1/2}
\quad\mbox{with}\quad
\nu=1/\mu_p.
\]
%where $\nu=1/\mu_p$.
%where $\nu=p^{-1}\lambda_1^{-1}\trace(\Lambda)=p^{-1}\lambda_1^{-1}$.
%Moreover, under \Cref{asm:lowdim},
%$
%	\N{Y^{(n)}}_2< (\nu p\wtd\lambda_1)^{1/2}.
%$

For convenience, we introduce
$T^{(n)}=\scrT(V^{(n)})$,
and let $\fil_n=\sigma\txs\set{Y^{(1)},\dots,Y^{(n)}}$ be the $\sigma$-algebra filtration, i.e., the information known by step $n$.
Also, since in this section $\varepsilon,\,\beta$ are fixed, we suppress the dependency information of $M(\varepsilon)$
on $\varepsilon$
and $N_s(\beta)$ on $\beta$ to simply write $M$ for $M(\varepsilon)$ and $N_s$ for $N_s(\beta)$.

Lastly, we discuss some of the important implications of the conditions:
\begin{gather}
\tag{\ref{eq:beta-bound}}
0< \beta < \min\txs\set*{1,
	%\left( \tfrac{\sqrt{2}-1}{\nu p\lambda_1 } \right)^{\tfrac{1}{1-2\varepsilon}},
	%%\left( \tfrac{1}{8\kappa p\lambda_1 } \right)^{\tfrac{1}{1-2\varepsilon}},
	\left( \tfrac{1}{8\kappa \eta_p } \right)^{\tfrac{2}{1-4\varepsilon}},
	%%\left(\tfrac{1}{2\kappa \wtd\lambda_1 C_{\Delta}}\right)^{\tfrac{1}{1+\varepsilon}},
	%%\left(\tfrac{1}{2\kappa \wtd\lambda_1 C_{\Delta}}\right)^{\tfrac{4}{5+2\varepsilon}},
	%\left(\tfrac{\gamma^2}{130\nu^{1/2}\kappa ^2p^2\lambda_1 ^2}\right)^{\tfrac{1}{\varepsilon}}
	\left(\tfrac{\gamma}{130\kappa ^2\eta_p ^2}\right)^{\tfrac{1}{\varepsilon}}
	%%,
	%%\\
	%%\tfrac{\gamma^2}{32C_T^2\kappa ^4\wtd\lambda_1 ^4},
	%%\left(\tfrac{\gamma^2}{32C_T^2\kappa ^4\wtd\lambda_1 ^4}\right)^{\tfrac{1}{1+2\varepsilon}},
	%%\left(\tfrac{\gamma^2}{32C_T^2\kappa ^4\wtd\lambda_1 ^4}\right)^{\tfrac{4}{5+6\varepsilon}}
}
,
\\	
\tag{\ref{eq:beta-d-bound}}
(\sqrt{2}+1)\lambda_1d\beta^{1-7\varepsilon}\le \omega,
\quad
K>N_{3/2-37\varepsilon/4}(\beta)
\end{gather}
of \Cref{thm:thm1:ppr1}.
They guarantee that
%\newlist{betaenum}{enumerate}{1}
%\setlist[betaenum]{label=($\beta$-\arabic*),ref=$\beta$-\arabic*}
%\crefalias{betaenumi}{equation}
\begin{enumerate}[label=($\beta$-\arabic*),ref=($\beta$-\arabic*),leftmargin=3\parindent]
	\item $\beta<1$; %and $\beta\gamma=\beta^{1/2+2\varepsilon}\beta^{1/2-2\varepsilon}(\lambda_p-\lambda_{p+1})\le\tfrac{1}{8\kappa p\lambda_1}\lambda_1\le \tfrac{1}{16\sqrt{3}}$;
	\label{itm:beta}
	%\item $d\beta\wtd\lambda_1\le\beta^{2\varepsilon}\omega\lambda_1\beta^{-2\varepsilon}\le \omega\lambda_1$;\label{itm:beta-d}
	\item $\beta\gamma\le \beta\wtd\eta_p\le\nu \beta\wtd\eta_p=\beta\wtd\eta_d\le d\beta\wtd\lambda_1= d\lambda_1\beta^{1-2\varepsilon}\le (\sqrt{2}-1)\omega\le \sqrt{2}-1$.\label{itm:beta-p}
	%\item $\beta\gamma\le p\beta\wtd\lambda_1\le\nu p\beta\wtd\lambda_1\le d\beta\wtd\lambda_1= d\lambda_1\beta^{1-2\varepsilon}\le (\sqrt{2}-1)\omega\le \sqrt{2}-1$.\label{itm:beta-p}
\end{enumerate}
Set
\begin{subequations}\label{eq:C}
	\begin{align}
	C_V &=\tfrac{5}{2}+\tfrac{7}{2}(\nu \wtd\eta_p \beta)+\tfrac{15}{8}(\nu \wtd\eta_p\beta)^2+\tfrac{3}{8}(\nu \wtd\eta_p\beta)^3\le\tfrac{16+13\sqrt{2}}{8}\approx 4.298;\label{eq:C_V}\\
	C_{\Delta} &= 2+\tfrac{1}{2}(\nu \wtd\eta_p\beta)+C_V \wtd\eta_p\beta\le\tfrac{22+7\sqrt{2}}{8}\approx 3.987;\label{eq:C_VDelta}\\
	C_T &= C_V+2C_{\Delta}+2C_{\Delta}C_V \wtd\eta_p\beta \le \tfrac{251+122\sqrt{2}}{16}\approx 26.471; \label{eq:C_T}\\
	C_{\kappa} &=\tfrac{(3-\sqrt{2})C_{\Delta}^2}{64(C_T+2C_{\Delta} )^2} \le \tfrac{565+171\sqrt{2}}{21504}\approx 0.038; \label{eq:C_kappa}\\
	C_{\nu} &=4\sqrt{2}C_TC_{\kappa} \le \tfrac{223702+183539\sqrt{2}}{86016}\approx 5.618; \label{eq:C_nu}\\
	C_{\circ} &=\tfrac{29+8\sqrt{2}}{16(3-\sqrt{2})}
	+\tfrac{4C_T}{(3-\sqrt{2})C_{\Delta}}\beta^{3\varepsilon}
	+[C_T+\tfrac{29+8\sqrt{2}}{32}]\beta^{1/2-3\varepsilon} %\nonumber \\
	+\tfrac{3C_T^2}{2(3-\sqrt{2})C_{\Delta}^2}\beta^{1/2+3\varepsilon}
	+\tfrac{2C_T}{C_{\Delta}}\beta^{1-3\varepsilon}+	\tfrac{C_T^2}{2C_{\Delta}^2}\beta^{3/2-3\varepsilon} \nonumber \\
	&\le \tfrac{2582968+1645155\sqrt{2}}{14336}
	\approx 342.464.	 \label{eq:C_circE}
	\end{align}
\end{subequations}
The condition \cref{eq:beta-bound} also guarantees that
\begin{enumerate}[label=($\beta$-\arabic*),ref=($\beta$-\arabic*),leftmargin=3\parindent,resume]
	\item $2C_{\Delta} \wtd\eta_p \beta^{1/2}\kappa =2C_{\Delta} \eta_p \beta^{1/2-2\varepsilon}\kappa \le \tfrac{2C_{\Delta}}{8}<1$,
	and thus $2C_{\Delta} \wtd\eta_p \beta\kappa<1$; \label{itm:beta-C_Vdelta}
	\item $4\sqrt{2}C_T\kappa ^2\wtd\eta_p ^2\gamma^{-1}\beta^{5\varepsilon}\le1$, and thus $4\sqrt{2}C_T\kappa ^2\wtd\eta_p ^2\gamma^{-1}\beta^{1/2+\chi}\le1$ for $\chi\in[-1/2+5\varepsilon,0]$. \label{itm:beta-C_T}
\end{enumerate}

\subsection{Increments by One Iterative Step}\label{ssec:increments-in-one-iteration}

\begin{lemma}\label{lm:quasi-bounded}
	For any fixed integer $K\ge 1$,
	\[
	\txs\prob{\txs\Nqb{\Lambda}>K}\ge 1-
	K(\ee d+p+1)\exp\left(-C_{\psi}\min\txs\set{\psi^{-1},\psi^{-2}}\beta^{-2\varepsilon}\right),
	\]
	where $C_\psi$ is an absolute constant.
\end{lemma}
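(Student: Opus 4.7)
\smallskip\noindent\textit{Proof plan.} By the definition \cref{eq:Nqb-dfn}, the event $\set{\Nqb{\Lambda}>K}$ holds iff for every $n\in\set{1,\dots,K}$ both $\N{Z^{(n)}}_2\le\wtd\eta_p^{1/2}$ and $|Y^{(n)}_i|\le\wtd\lambda_i^{1/2}$ for $i=1,\dots,d$. The approach is the obvious union bound over the $K(d+1)$ ``failure modes'':
\[
1-\prob{\Nqb{\Lambda}>K}
\le \sum_{n=1}^{K}\sum_{i=1}^{d}\prob{|Y^{(n)}_i|>\wtd\lambda_i^{1/2}}
+\sum_{n=1}^{K}\prob{\N{Z^{(n)}}_2>\wtd\eta_p^{1/2}},
\]
with each summand to be controlled through the sub-Gaussian hypothesis $\psi=\N{\Lambda^{-1/2}\bY}_{\psi_2}<\infty$ of \Cref{asm:simplify:X}.

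The coordinate bounds are immediate: since $Y_i/\sqrt{\lambda_i}=e_i^{\T}\Lambda^{-1/2}\bY$, by the definition of the vector Orlicz norm its $\psi_2$-norm is at most $\psi$, so the standard sub-Gaussian tail bound gives
\[
\prob{|Y^{(n)}_i|>\wtd\lambda_i^{1/2}}
=\prob{|Y^{(n)}_i/\sqrt{\lambda_i}|>\beta^{-\varepsilon}}
\le 2\exp\!\left(-c_1\psi^{-2}\beta^{-2\varepsilon}\right),
\]
contributing at most $2Kd\exp(-c_1\psi^{-2}\beta^{-2\varepsilon})$ after summing over $n,i$.

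The substantive step is the bound on $\prob{\N{Z^{(n)}}_2>\wtd\eta_p^{1/2}}$. First condition on $\fil_{n-1}$: then $V^{(n-1)}$ is frozen, while $Y^{(n)}$ is independent of $\fil_{n-1}$ and retains its original sub-Gaussian distribution. Exploiting $(V^{(n-1)})^{\T}V^{(n-1)}=I_p$, the conditional mean square is
\[
\E[*]{\N{Z^{(n)}}_2^2\given\fil_{n-1}}
=\trace\!\left((V^{(n-1)})^{\T}\Lambda V^{(n-1)}\right)
=\sum_{i=1}^{d}\lambda_i\,[V^{(n-1)}(V^{(n-1)})^{\T}]_{(i,i)}
\le\eta_p,
\]
since the diagonal entries of the rank-$p$ orthogonal projector $V^{(n-1)}(V^{(n-1)})^{\T}$ lie in $[0,1]$ and sum to $p$, so the weighted sum against the non-increasing spectrum is maximized by placing unit mass on the top $p$ eigenvalues. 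A Hanson--Wright-type concentration inequality applied to the quadratic form $(Y^{(n)})^{\T}V^{(n-1)}(V^{(n-1)})^{\T}Y^{(n)}$---with effective sub-Gaussian scale $\psi\lambda_1^{1/2}$ and projector spectral norm $1$---then yields, conditionally on $\fil_{n-1}$,
\[
\prob[*]{\N{Z^{(n)}}_2>\wtd\eta_p^{1/2}\given\fil_{n-1}}
\le(p+1)\exp\!\left(-c_2\min\set{\psi^{-1},\psi^{-2}}\beta^{-2\varepsilon}\right),
\]
where the $(p+1)$ factor arises from covering the unit sphere in $\bbR^p$ by a finite net (or, equivalently, from combining a union bound over the $p$ columns of $V^{(n-1)}$ with one additional deviation term), and the $\min\set{\psi^{-1},\psi^{-2}}$ captures the mixed sub-Gaussian/sub-exponential character of the quadratic-form tail (quadratic regime for small $t$, linear regime for large $t$).

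Taking unconditional expectations, summing over $n$, combining with the coordinate contribution, and absorbing the absolute constants into a single $C_\psi$ (with the factor $2$ folded into the $\ee d$ coefficient) then produces the claimed bound. The main obstacle is the tail estimate on $\N{Z^{(n)}}_2$, since $V^{(n-1)}$ is a nontrivial random matrix driven by all prior samples $Y^{(1)},\dots,Y^{(n-1)}$; the conditioning argument is essential, because it reduces the problem to the deterministic-projector case where the sub-Gaussian scale $\psi\lambda_1^{1/2}$, the mean bound $\eta_p$, and the operator norm $\N{V^{(n-1)}}_2=1$ are all uniform in the realization of $V^{(n-1)}$.
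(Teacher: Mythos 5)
Your overall skeleton coincides with the paper's: a union bound over the $K(d+1)$ failure events, per-coordinate sub-Gaussian tails for $\abs{Y^{(n)}_i}$, and, for the $\N{Z^{(n)}}_2$ tail, conditioning on $\fil_{n-1}$ so that $V^{(n-1)}$ is a frozen matrix with orthonormal columns together with the bound $\trace((V^{(n-1)})^{\T}\Lambda V^{(n-1)})\le\eta_p$. The gap is in the one step that carries all the technical weight, namely the claim corresponding to \cref{eq:norm-Z-claim}: you invoke an unnamed ``Hanson--Wright-type'' inequality and assert a prefactor $p+1$ with an absolute constant in the exponent, but neither mechanism you offer for that prefactor delivers it. A net of the unit sphere of $\bbR^p$ has cardinality exponential in $p$ (e.g.\ $5^p$ for a $1/2$-net), so that route gives a prefactor $\exp(Cp)$, not $p+1$; and a union bound over the $p$ columns of $V^{(n-1)}$ forces per-column thresholds of order $\wtd\eta_p/p$ against a sub-Gaussian scale $\lambda_1^{1/2}\psi$, so the exponent becomes of order $-c\,\tfrac{\eta_p}{p\lambda_1}\psi^{-2}\beta^{-2\varepsilon}$, whose constant degrades with $\lambda_p/\lambda_1$ and is no longer the absolute $C_\psi$ of the statement. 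In addition, the classical Hanson--Wright inequality presupposes independent coordinates, which \cref{asm:simplify:X:subGaussian} does not give you: the Orlicz bound only controls the one-dimensional marginals of $\Lambda^{-1/2}\bY$, not joint independence.

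The paper obtains exactly the $(p+1)$ factor and a $V$-uniform exponent by a different device: it forms the self-adjoint dilation $W=\begin{bmatrix}0&Z\\ Z^{\T}&0\end{bmatrix}=\sum_{k}Y_kW_k$, applies the matrix Laplace-transform master tail bound, dominates each matrix mgf by $\exp(c\,\theta^2\lambda_k\psi^2\,W_k\circ W_k)$ using only the marginal sub-Gaussianity of $Y_k$, and then $p+1$ appears as the trace dimension of the resulting $(p+1)\times(p+1)$ matrix exponential, while $\lambda_{\max}\bigl(\sum_k\lambda_k W_k\circ W_k\bigr)\le\trace(V^{\T}\Lambda V)\le\eta_p$ supplies the variance proxy. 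If you replace your cited inequality by this argument (or any other producing a dimension-$(p+1)$ prefactor and an $\eta_p$-scaled exponent valid for dependent coordinates), your plan goes through; as written, the step establishing \cref{eq:norm-Z-claim} is not justified, and the weaker bounds your sketched mechanisms actually yield do not give the lemma in the stated form.
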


\begin{proof}
	Since
	$
	\txs\set*{\txs\Nqb{\Lambda}\le K}
	\subset \txs\bigcup\limits_{n\le K}\big(\txs\set*{\N{Z^{(n)}}_2\ge \wtd\eta_p^{1/2}}\cup\txs\bigcup\limits_{i=1}^d\txs\set*{\abs{e_i^{\T}Y^{(n)}}\ge \wtd\lambda_i^{1/2}} \big)
	,
	$
	we know
	%\marginpartiny{ which line used \cite[(5.10)]{vershynin2012introduction} \answer changed}
	\begin{equation}\label{eq:add2pf-1}
	\txs\prob{\txs\Nqb{\Lambda}\le K}
	\le \txs\sum\limits_{n\le K}\left(\txs\prob{\N{Z^{(n)}}_2\ge \wtd\eta_p^{1/2}}+\txs\sum\limits_{1\le i\le d}\txs\prob{\abs{e_i^{\T}Y^{(n)}}\ge \wtd\lambda_i^{1/2}}\right)
	.
	\end{equation}
	First,
	\begin{align}
	\txs\prob{\abs{e_i^{\T}Y^{(n)}}\ge \wtd\lambda_i^{1/2}}
	&= \txs\prob{\abs*{\tfrac{(\Lambda^{1/2}e_i)^{\T}}{\N{\Lambda^{1/2} e_i}_2}\Lambda^{-1/2}Y^{(n)}}\ge \tfrac{\wtd\lambda_i^{1/2}}{\N{\Lambda^{1/2} e_i}_2}}
	\nonumber \\
	&\le \exp\left(1-\tfrac{C_{\psi,i}\tfrac{\wtd\lambda_i}{e_i^{\T}\Lambda e_i}}{\N{\tfrac{(\Lambda^{1/2}e_i)^{\T}}{\N{\Lambda^{1/2} e_i}_2}\Lambda^{-1/2}Y^{(n)}}_{\psi_2}}\right)
	\qquad\qquad
	\text{(by \cite[(5.10)]{vershynin2012introduction})} \nonumber 	\\
	&\le \exp\left(1-\tfrac{C_{\psi,i}\wtd\lambda_i}{\N{\Lambda^{-1/2}Y^{(n)}}_{\psi_2}\lambda_i}\right)
	=\exp\left(1-C_{\psi,i}\psi^{-1}\beta^{-2\varepsilon}\right)		, \label{eq:add2pf-2}
	\end{align}
	where $C_{\psi,i},i=1,\dots,d$ are absolute constants \cite[(5.10)]{vershynin2012introduction}.
	Next, we claim %(will be proved in the next paragraph)
	\begin{equation}\label{eq:norm-Z-claim}
	\txs\prob{\N{Z^{(n)}}_2\ge \wtd\eta_p^{1/2}}\le
	(p+1)\exp\left(-C_{\psi,d+1}\psi^{-2}\beta^{-2\varepsilon}\right)
	\end{equation}
%\Blue{
	to be proven in the next paragraph. %}
	Together, \cref{eq:add2pf-1} -- \cref{eq:norm-Z-claim} yield
	\begin{align*}
	\txs\prob{\txs\Nqb{\Lambda}\le K}
	&=\txs\sum\limits_{n\le K}\txs\sum\limits_{1\le i\le d} \exp\left(1-C_{\psi,i}\psi^{-1}\beta^{-2\varepsilon}\right)
      +\txs\sum\limits_{n\le K} (p+1)\exp\left(-C_{\psi,d+1}\psi^{-2}\beta^{-2\varepsilon}\right)
	\\ &\le K(\ee d+p+1)\exp\left(-C_{\psi}\min\txs\set{\psi^{-1},\psi^{-2}}\beta^{-2\varepsilon}\right),
	\end{align*}
	where $C_\psi=\min\limits_{1\le i\le d+1}C_{\psi,i}$.
	Finally, use $\txs\prob{\txs\Nqb{\Lambda}> K}=1-\txs\prob{\txs\Nqb{\Lambda}\le K}$ to complete the proof.

	It remains to prove \cref{eq:norm-Z-claim}.
	To avoid the cluttered superscripts, we drop the superscript ``$\cdot^{(n-1)}$'' on $V$, and drop the superscript ``$\cdot^{(n)}$'' on $Y,\,Z$.
	Consider
	\[
	W:=\begin{bsmallmatrix}
	0 & Z\\ Z^{\T} & 0
	\end{bsmallmatrix}
	= \begin{bsmallmatrix}
	& V^{\T}Y\\ Y^{\T}V &
	\end{bsmallmatrix}
	=\txs\sum\limits_{k=1}^{d} Y_k\begin{bsmallmatrix}
	& & & v_{k1} \\
	& & & \vdots \\
	& & & v_{kp} \\
	v_{k1} & \cdots & v_{kp} & 0\\
	\end{bsmallmatrix}
	=: \txs\sum\limits_{k=1}^{d} Y_kW_k,
	\]
	where $v_{ij}$ is the $(i,j)$th entry of $V$ and $Y_k$ is the $k$th entry of $Y$.
	By the matrix version of master tail bound \cite[Theorem~3.6]{tropp2012user}, for any $\alpha>0$,
we have
	\[
	\txs\prob{\N{Z}_2\ge \alpha}
	= \txs\prob{\lambda_{\max}(W)\ge \alpha}
	\le \inf\limits_{\theta>0}\ee^{-\theta \alpha}\trace\exp\left(\txs\sum\limits_{k=1}^{d}\ln \txs\E{\exp(\theta Y_kW_k)} \right).
	\]
	$Y$ is sub-Gaussian and $\txs\E{Y}=0$, and so is $Y_k$.
	Moreover,
	\[
	\N{Y_k}_{\psi_2}=\N{e_k^{\T}\Lambda^{1/2}}_2\N*{\tfrac{e_k^{\T}\Lambda^{1/2}}{\N{e_k^{\T}\Lambda^{1/2}}_2}\Lambda^{-1/2}Y}_{\psi_2}
	\le \lambda_k^{1/2}\N{\Lambda^{-1/2}Y}_{\psi_2}
	=\lambda_k^{1/2}\psi.
	\]
	Also, by \cite[(5.12)]{vershynin2012introduction},
	\[
	\txs\E{\exp(\theta W_kY_k)}\le \exp(C_{\psi,d+k}\theta^2W_k\circ W_k\N{Y_k}_{\psi_2}^2)
	\le \exp(c_{\psi,k}\theta^2\lambda_k\psi^2W_k\circ W_k)
	,
	\]
	where $c_{\psi,k}, k=1,\dots,d$ are absolute constants.
	Therefore, writing $[4C_{\psi,d+1}]^{-1}=\max\limits_{1\le k\le d}c_{\psi,k}$
	and $W_\psi:=\txs\sum\limits_{k=1}^{d} \lambda_kW_k\circ W_k$ with the spectral decomposition $W_\psi=V_\psi\Lambda_\psi V_\psi^{\T}$, we have
	\begin{align*}
	\trace\exp\left(\txs\sum\limits_{k=1}^{d}\ln \txs\E{\exp(\theta Y_kW_k)} \right)
	&
	\le \trace\exp\left(\txs\sum\limits_{k=1}^{d} c_{\psi,k}\theta^2\lambda_k\psi^2W_k\circ W_k\right)
	\\&\le \trace\exp([4C_{\psi,d+1}]^{-1}\theta^2\psi^2W_\psi)
	\\&= \trace\exp([4C_{\psi,d+1}]^{-1}\theta^2\psi^2V_\psi\Lambda_\psi V_\psi^{\T})
	\\&= \trace\left( V_\psi \exp([4C_{\psi,d+1}]^{-1}\theta^2\psi^2\Lambda_\psi) V_\psi^{\T}\right)
	%\\&= \trace\left( \exp([4C_{\psi,d+1}]^{-1}\theta^2\psi^2\Lambda_\psi) V_\psi^{\T}V_\psi \right)
	\\&= \trace\exp([4C_{\psi,d+1}]^{-1}\theta^2\psi^2\Lambda_\psi)
	\\&\le (p+1)\exp([4C_{\psi,d+1}]^{-1}\theta^2\psi^2\lambda_{\max}(\Lambda_\psi))
	\\&= (p+1)\exp([4C_{\psi,d+1}]^{-1}\theta^2\psi^2\lambda_{\max}(W_\psi))
	.
	\end{align*}
	Note that
	\[
	W_\psi
	%&= \txs\sum\limits_{k=1}^{d} \lambda_kW_k\circ W_k
	=
	\begin{bsmallmatrix}
	0&\cdots&0&\txs\sum\limits_{k=1}^{d} \lambda_kv_{k1}^2 \\
	\vdots&&\vdots&\vdots \\
	0&\cdots&0&\txs\sum\limits_{k=1}^{d} \lambda_kv_{kp}^2 \\
	\txs\sum\limits_{k=1}^{d} \lambda_kv_{k1}^2 & \cdots &\txs\sum\limits_{k=1}^{d} \lambda_kv_{kp}^2 & 0 \\
	\end{bsmallmatrix}
	=
	\begin{bsmallmatrix}
	0&\cdots&0&e_1^{\T}V^{\T}\Lambda Ve_1 \\
	\vdots&&\vdots&\vdots \\
	0&\cdots&0&e_p^{\T}V^{\T}\Lambda Ve_p \\
	e_1^{\T}V^{\T}\Lambda Ve_1 &\cdots & e_p^{\T}V^{\T}\Lambda Ve_p &0 \\
	\end{bsmallmatrix},
	\]
	and thus
%	\begin{align*}
%	\lambda_{\max}(W_\psi)=\N*{\begin{bsmallmatrix}
%		e_1^{\T}V^{\T}\Lambda Ve_1 \\
%		\vdots \\
%		e_p^{\T}V^{\T}\Lambda Ve_p \\
%		\end{bsmallmatrix}}_2
%	&\le \txs\sum\limits_{k=1}^{p} e_k^{\T}V^{\T}\Lambda Ve_k
%	= \trace(V^{\T}\Lambda V) 	\\[-10pt]
%    &\le
%%\max\limits_{V^{\T}V=I_p} \trace(V^{\T}\Lambda V)
%	 \txs\sum\limits_{k=1}^{p}\lambda_k
%	=\eta_p.
%	\end{align*}
	\[
		\lambda_{\max}(W_\psi)=\N*{\begin{bsmallmatrix}
				e_1^{\T}V^{\T}\Lambda Ve_1 \\
				\vdots \\
				e_p^{\T}V^{\T}\Lambda Ve_p \\
				\end{bsmallmatrix}}_2
			\le \txs\sum\limits_{k=1}^{p} e_k^{\T}V^{\T}\Lambda Ve_k
			= \trace(V^{\T}\Lambda V) 	
		    \le\txs\sum\limits_{k=1}^{p}\lambda_k
			=\eta_p.
	\]
	In summary, we have
	\[
	\txs\prob{\N{Z}_2\ge \alpha}
	\le (p+1)\inf\limits_{\theta>0}\exp([4C_{\psi,d+1}]^{-1}\theta^2\psi^2\eta_p-\theta \alpha)
	= (p+1)\exp\left(-\tfrac{C_{\psi,d+1}\alpha^2}{\psi^2\eta_p}\right).
	\]
	Substituting $\alpha=\wtd\eta_p^{1/2}$, we have the claim \cref{eq:norm-Z-claim}.
\end{proof}
\begin{lemma}\label{lm:diff-V}
	Suppose  the conditions of \emph{\Cref{thm:thm1:ppr1}} hold.
	If $n<\txs\Nqb{\Lambda}$, then
\begin{equation}\label{eq:diff-V}
	V^{(n+1)}			
	%&=V^{(n)}+\beta[I-V^{(n)}(V^{(n)})^{\T}]Y^{(n+1)}(Y^{(n+1)})^{\T}V^{(n)}			
	= V^{(n)} + \beta Y^{(n+1)}(Z^{(n+1)})^{\T}
	-\beta\big[1+\tfrac{\beta}{2}(Y^{(n+1)})^{\T}Y^{(n+1)}\big] V^{(n)}Z^{(n+1)}(Z^{(n+1)})^{\T}
	+R^{(n)}(Z^{(n+1)})^{\T},
\end{equation}
%	\begin{multline}\label{eq:diff-V}
%	V^{(n+1)}			
%	%&=V^{(n)}+\beta[I-V^{(n)}(V^{(n)})^{\T}]Y^{(n+1)}(Y^{(n+1)})^{\T}V^{(n)}			
%	= V^{(n)} + \beta Y^{(n+1)}(Z^{(n+1)})^{\T} \\
%	-\beta\big[1+\tfrac{\beta}{2}(Y^{(n+1)})^{\T}Y^{(n+1)}\big] V^{(n)}Z^{(n+1)}(Z^{(n+1)})^{\T}
%	+R^{(n)}(Z^{(n+1)})^{\T},
%	\end{multline}
	%\begin{equation}\label{eq:diff-V}
	%	\begin{multlined}[b]
	%		V^{(n+1)}			
	%		  %&=V^{(n)}+\beta[I-V^{(n)}(V^{(n)})^{\T}]Y^{(n+1)}(Y^{(n+1)})^{\T}V^{(n)}			
	%		   = V^{(n)} + \beta Y^{(n+1)}(Z^{(n+1)})^{\T} \\
	%            -\beta\Big[1+\tfrac{\beta}{2}(Y^{(n+1)})^{\T}Y^{(n+1)}\Big] V^{(n)}Z^{(n+1)}(Z^{(n+1)})^{\T}
	%		    +R^{(n)}(Z^{(n+1)})^{\T},
	%	\end{multlined}
	%\end{equation}
	where $R^{(n)}\in \bbR^d$ is a random vector with $\N{R^{(n)}}_2\le C_V \nu^{1/2}\wtd\eta_p^{3/2}\beta^2$ and $C_V$ is as in \cref{eq:C_V}.
\end{lemma}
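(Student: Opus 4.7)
The plan is to start from the $Y$-coordinate version of the update on line~5 of \Cref{alg:onlinePCA:ppr1},
\[
V^{(n+1)} = V^{(n)} + \beta\wtd\alpha^{(n+1)} Y^{(n+1)}(Z^{(n+1)})^{\T} - \frac{1-\wtd\alpha^{(n+1)}}{(Z^{(n+1)})^{\T}Z^{(n+1)}}\,V^{(n)}Z^{(n+1)}(Z^{(n+1)})^{\T},
\]
and to Taylor--expand the two nonlinear scalar coefficients $\wtd\alpha=(1+\alpha)^{-1/2}$ and $(1-\wtd\alpha)/(Z^{\T}Z)$ in the small parameter $\alpha=\alpha^{(n+1)}=\beta[2+\beta(Y^{(n+1)})^{\T}Y^{(n+1)}](Z^{(n+1)})^{\T}Z^{(n+1)}$. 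Using the alternating identity $(1+x)^{-1/2} = 1-x/2+(3/8)x^2-(5/16)x^3+\cdots$, I would write $\wtd\alpha-1 = -\alpha/2+\rho_1$ and $(1-\wtd\alpha)/(Z^{\T}Z) = \beta+(\beta^2/2)(Y^{(n+1)})^{\T}Y^{(n+1)}+\rho_2$, with $|\rho_1|\le(3/8)\alpha^2$ and $|\rho_2|\le(3/8)\alpha^2/(Z^{\T}Z)$; the alternating bound is legitimate because condition~(\ref{itm:beta-p}) ensures $\alpha\le 1$.

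Substituting these decompositions into the update and grouping terms reproduces the first three summands in the claimed expression and leaves a residual of the form $[\beta(\wtd\alpha-1)Y^{(n+1)}-\rho_2 V^{(n)}Z^{(n+1)}](Z^{(n+1)})^{\T}$. Both residual pieces factor the $(Z^{(n+1)})^{\T}$ out on the right (the first because it inherits it from the original $Y^{(n+1)}(Z^{(n+1)})^{\T}$ term, the second because $V^{(n)}Z^{(n+1)}(Z^{(n+1)})^{\T}$ has it too), so the remainder legitimately takes the form $R^{(n)}(Z^{(n+1)})^{\T}$ with $R^{(n)}=\beta(\wtd\alpha-1)Y^{(n+1)}-\rho_2 V^{(n)}Z^{(n+1)}$.

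For the norm bound I would invoke the quasi-boundedness hypothesis $n<\Nqb{\Lambda}$: by the definition in \cref{eq:Nqb-dfn} this yields $\N{Y^{(n+1)}}_2\le \nu^{1/2}\wtd\eta_p^{1/2}$ and $\N{Z^{(n+1)}}_2\le\wtd\eta_p^{1/2}$, and the columns of $V^{(n)}$ are orthonormal so $\N{V^{(n)}}_2=1$. Combining these with the Taylor bounds on $\rho_1,\rho_2$ and the explicit formula $\alpha\le\beta[2+\beta\nu\wtd\eta_p]\wtd\eta_p$, each summand of $R^{(n)}$ is at most a polynomial in the dimensionless quantity $b:=\beta\nu\wtd\eta_p$ times $\nu^{1/2}\wtd\eta_p^{3/2}\beta^2$; summing and simplifying produces precisely the constant $C_V=5/2+(7/2)b+(15/8)b^2+(3/8)b^3$ of \cref{eq:C_V}.

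The only genuine obstacle is the singular appearance of $1/(Z^{\T}Z)$ in $\rho_2$, which is formally problematic as $\N{Z^{(n+1)}}_2\to 0$. The resolution is structural rather than analytic: $\alpha^2$ contains $(Z^{\T}Z)^2$, so the bound $|\rho_2|\cdot\N{V^{(n)}Z^{(n+1)}}_2\le(3\alpha^2/(8 Z^{\T}Z))\N{Z^{(n+1)}}_2$ simplifies to a quantity proportional to $\N{Z^{(n+1)}}_2^3$ that never sees the purported singularity. Once this cancellation is observed, the rest of the estimate is routine Taylor-remainder arithmetic.
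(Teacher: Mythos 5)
Your proposal is correct and follows essentially the same route as the paper: expand $(1+\alpha)^{-1/2}$ to second order in $\alpha=\beta[2+\beta Y^{\T}Y]Z^{\T}Z$ (the paper uses the Lagrange remainder $\tfrac38(1+\xi)^{-5/2}\alpha^2$ where you use the alternating-series bound $\tfrac38\alpha^2$, which is equivalent here since $\alpha<1$ on the quasi-bounded event), collect the leftover terms into $R^{(n)}(Z^{(n+1)})^{\T}$, and bound $\N{R^{(n)}}_2$ via $\N{Y^{(n+1)}}_2\le\nu^{1/2}\wtd\eta_p^{1/2}$, $\N{Z^{(n+1)}}_2\le\wtd\eta_p^{1/2}$, $\N{V^{(n)}}_2=1$. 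Your residual $R^{(n)}=\beta(\wtd\alpha-1)Y^{(n+1)}-\rho_2V^{(n)}Z^{(n+1)}$ matches the paper's $R$ term by term (including the cancellation of the apparent $1/(Z^{\T}Z)$ singularity), and the resulting constant is exactly $C_V$.
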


\begin{proof}
	To avoid the cluttered superscripts, in this proof, we drop %the superscript
	``$\cdot^{(n)}$''
	and use %the superscript
	``$\cdot\new$'' to replace ``$\cdot^{(n+1)}$'' on $V$, and drop %the superscript
	``$\cdot^{(n+1)}$'' on $Y,\,Z$.
	
	On the set $\txs\set*{\txs\Nqb{\Lambda}>n}$,
	by \cref{eq:beta-d-bound} and \cref{itm:beta-p},
	we have
	\[
	\alpha=\beta(2+\beta Y^{\T}Y)Z^{\T}Z
	\le \beta(2+\nu\wtd\eta_p\beta)\wtd\eta_p
	\le (2+\sqrt{2}-1)(\sqrt{2}-1)/\nu < 1.
	\]
	By Taylor's expansion, there exists $0<\xi<\alpha$ such that
	\[
	(1+\alpha)^{-1/2}
	=1-\tfrac{1}{2}\alpha+\tfrac{3}{8}\tfrac{1}{(1+\xi)^{5/2}}\alpha^2
	%\qquad (\text{for $0<\xi<\alpha$})
	%\\ &=1-\beta Z^{\T}Z-\tfrac{\beta^2}{2}Y^{\T}YZ^{\T}Z+\tfrac{3}{8}\tfrac{1}{(1+\xi)^{5/2}}\beta^2(2+\beta Y^{\T}Y)^2(Z^{\T}Z)^2
	=1-\beta Z^{\T}Z-\tfrac{\beta^2}{2}Y^{\T}YZ^{\T}Z+ \beta^2(Z^{\T}Z)^2\zeta %\OO(p^2\wtd\lambda_1 ^2\beta^2)
	,
	\]
	where
	$ %\begin{align*}
	\zeta
	=\tfrac{3}{8}\tfrac{1}{(1+\xi)^{5/2}}(2+\beta Y^{\T}Y)^2
	\le \tfrac{3}{8}(2+\nu \beta \wtd\eta_p)^2
	%\le \tfrac{3(3+2\sqrt{2})}{8}
	.
	$ %\end{align*}
	Thus
	\begin{align*}
	V\new
	&= (V + \beta YZ^{\T})\left(I-\left[\beta Z^{\T}Z+\tfrac{\beta^2}{2}Y^{\T}YZ^{\T}Z-\beta^2(Z^{\T}Z)^2\zeta\right]\tfrac{ZZ^{\T}}{Z^{\T}Z}\right)
	\\ &=  V + \beta YZ^{\T} -\beta VZZ^{\T}-\tfrac{\beta^2}{2}(Y^{\T}Y)VZZ^{\T} +RZ^{\T},
	\end{align*}
	where
	$
	R= -\tfrac{\beta^2}{2}(Z^{\T}Z)(2+\beta Y^{\T}Y)Y +\zeta\beta^2(Z^{\T}Z) VZ+\zeta\beta^3(Z^{\T}Z)^2Y
	$
	for which
	\begin{align*}
	\N{R}_2
	&\le \tfrac{\beta^2}{2}\wtd\eta_p(2+\beta\nu \wtd\eta_p )(\nu \wtd\eta_p)^{1/2}
            +\zeta\beta^2\wtd\eta_p^{3/2}+\zeta\beta^3\wtd\eta_p^2(\nu \wtd\eta_p)^{1/2}	\\
    &= \left[ \tfrac{1}{2}(2+\beta\nu \wtd\eta_p) +\tfrac{3}{8}(2+\beta\nu \wtd\eta_p)^2
            +\tfrac{3}{8}(2+\beta\nu \wtd\eta_p)^2(\beta \wtd\eta_p) \right]\nu^{1/2}\wtd\eta_p^{3/2}\beta^2 \\
    &=C_V\nu^{1/2}\wtd\eta_p^{3/2}\beta^2,
	\end{align*}
	as expected.
	%\Cref{itm:lm:diff-V:dV} holds.
	%\Cref{itm:lm:diff-V:normdV} is clear, for
	%\begin{align*}
	%\N{V\new-V}_2
	%&\le \beta\N{(I-VV^{\T})YZ^{\T}-\tfrac{\beta}{2}(Y^{\T}Y)VZZ^{\T}}_2+C_V(p\wtd\lambda_1\beta)^{3/2}
	%\\ &\le \beta\N{I-VV^{\T}}_2(d\wtd\lambda_1)^{1/2}(p\wtd\lambda_1)^{1/2}+\tfrac{\beta^2}{2}(d\wtd\lambda_1)(p\wtd\lambda_1)+C_V(p\wtd\lambda_1\beta)^{3/2}
	%\\ &\le (p\wtd\lambda_1\beta)^{1/2}+\tfrac{1}{2}p\wtd\lambda_1\beta+C_V(p\wtd\lambda_1\beta)^{3/2}
	%.
	%\end{align*}
	%\cref{itm:lm:diff-V:EdV} is straight.
\end{proof}

\begin{lemma}\label{lm:diff-T}
	Suppose  the conditions of \emph{\Cref{thm:thm1:ppr1}} hold. Let
	$\tau=\N{T^{(n)}}_2$, and $C_T$ be as in \cref{eq:C_T}.
	If $n<\min\txs\set*{\txs\Nqb{\Lambda}, \txs\Nout{\sphere(\kappa)}}$, then %the following statements hold.
	\begin{enumerate}[{\rm (1)}]
		\item \label{itm:lm:diff-T:well-defined}
		$T^{(n)}$ and $T^{(n+1)}$ are well-defined.
		%			\marginpartiny{ This is the first time $E_T^{(n)}(V^{(n)})$ appears. What is it?
		%				\answer it is a function of which the expression is of no use below. The only thing needed to know is item (2b).
		%}
		\item \label{itm:lm:diff-T:dT}
		%\marginpartiny{reworded.\answer modified}
		Define $E_T^{(n)}(V^{(n)}):=\txs\E{T^{(n+1)}-T^{(n)}\given \fil_n}-\beta (\ul\Lambda T^{(n)}-T^{(n)}\ol\Lambda )$. Then
		\begin{enumerate}[{\rm (a)}]
			%\item \label{itm:lm:diff-T:EdT}
			%					$\txs\E{T^{(n+1)}-T^{(n)}\given \fil_n}=\beta (\ul\Lambda T^{(n)}-T^{(n)}\ol\Lambda )+ E_T^{(n)}(V^{(n)})$;
			\item \label{itm:lm:diff-T:normE_T}
			$\sup\limits_{V\in\sphere(\kappa )}\N{E_T^{(n)}(V)}_2\le C_T \nu^{1/2}(\wtd\eta_p \beta)^2(1+\tau^2)^{3/2}$;
			\item \label{itm:lm:diff-T:normdT}
			$\N{T^{(n+1)}-T^{(n)}}_2\le \nu^{1/2}(\wtd\eta_p \beta)(1+\tau^2)+C_T\nu^{1/2}(\wtd\eta_p \beta)^2(1+\tau^2)^{3/2}$.
		\end{enumerate}
		\item \label{itm:lm:diff-T:estimate}
		Define $R_{\circ}:=\varc{T^{(n+1)}-T^{(n)}\given\fil_n}-\beta^2H_{\circ}$. Then
		\begin{enumerate}[{\rm (a)}]
			\item \label{itm:lm:diff-T:varcdT}
			%$\varc{T^{(n+1)}-T^{(n)}\given\fil_n}=\beta^2H_{\circ}+R_{\circ}$,
			%where
			$H_{\circ}=\varc{\ul Y\ol Y^{\T}}\le 16\psi^4 H$, where $H=[\eta_{ij}]_{(d-p)\times p}$ with $\eta_{ij}=\lambda_{p+i}\lambda_j$ for $i=1,\dots,d-p$, $j=1,\dots,p$;
			%\begin{enumerate}
			%\item \label{itm:lm:diff-T:Hweak}
			%\item \label{itm:lm:diff-T:Hstrong}
			%If $Y$ satisfies the weak version of \Cref{asm:4th-moment}, then $H_{\circ}=H\circ H$.
			%\end{enumerate}
			\item \label{itm:lm:diff-T:normR_H}
			$\N{R_{\circ}}_2\le (\nu \wtd\eta_p\beta)^2\tau(1+\tfrac{11}{2}\tau+\tau^2+\tfrac{1}{4}\tau^3)+4C_T\nu(\wtd\eta_p\beta)^3(1+\tau^2)^{5/2}+2C_T^2\nu(\wtd\eta_p\beta)^4(1+\tau^2)^3$.
		\end{enumerate}
	\end{enumerate}
\end{lemma}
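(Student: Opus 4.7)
The plan is to work throughout on the event $\set*{n<\min\set*{\Nqb{\Lambda},\Nout{\sphere(\kappa)}}}$ and to address the three claims in sequence. For part~(1), the invertibility of $\ol V^{(n)}$ is immediate from $V^{(n)}\in\sphere(\kappa)$, since by definition $\sigma(\ol V^{(n)})\subset[1/\kappa,1]$, giving $\N{(\ol V^{(n)})^{-1}}_2\le\kappa$ and thus defining $T^{(n)}$. For $\ol V^{(n+1)}$, I would extract the top $p$ rows of the identity in \Cref{lm:diff-V} and factor $\ol V^{(n)}$ out on the right, obtaining $\ol V^{(n+1)}=\ol V^{(n)}(I_p+\beta G)$ for an explicit perturbation $G$. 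Bounding $G$ via $\N{Y^{(n+1)}}_2\le\nu^{1/2}\wtd\eta_p^{1/2}$, $\N{Z^{(n+1)}}_2\le\wtd\eta_p^{1/2}$, $\N{(\ol V^{(n)})^{-1}}_2\le\kappa$, and the remainder bound on $R^{(n)}$ from \Cref{lm:diff-V}, condition~\cref{itm:beta-C_Vdelta} forces $\N{\beta G}_2<1$, so $\ol V^{(n+1)}$ is invertible by a Neumann argument and $T^{(n+1)}$ is well-defined.

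For part~(2), I would start from \cref{eq:diff-V}, split into top and bottom $(p,d-p)$-blocks, and expand
\[
T^{(n+1)}=[\ul V^{(n)}+\beta N](\ol V^{(n)}+\beta M)^{-1}
\]
with $M,N$ the block pieces read off from \cref{eq:diff-V}. Expanding $(\ol V^{(n)}+\beta M)^{-1}$ as a Neumann series in $(\ol V^{(n)})^{-1}$ and invoking the crucial identity $\ul V^{(n)}=T^{(n)}\ol V^{(n)}$, the potentially damaging term $(\ul V^{(n)}-T^{(n)}\ol V^{(n)})Z^{(n+1)}(Z^{(n+1)})^{\T}(\ol V^{(n)})^{-1}$ cancels, leaving
\[
T^{(n+1)}-T^{(n)}=\beta\bigl(\ul Y^{(n+1)}-T^{(n)}\ol Y^{(n+1)}\bigr)(Z^{(n+1)})^{\T}(\ol V^{(n)})^{-1}+O(\beta^2).
\]
Using the block identity $V^{(n)}(\ol V^{(n)})^{-1}=[I_p;\,T^{(n)}]$ (stacked vertically) together with $\E{YY^{\T}}=\Lambda$ (diagonal), taking $\E{\,\cdot\,\given\fil_n}$ of the first-order piece gives the claimed drift $\beta(\ul\Lambda T^{(n)}-T^{(n)}\ol\Lambda)$. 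The bounds in~(a) and~(b) then reduce to triangle-inequality bookkeeping on the remaining $O(\beta^2)$ pieces---the quadratic term in the Neumann expansion, the $\tfrac{\beta^2}{2}Y^{\T}Y\,ZZ^{\T}$ correction in \cref{eq:diff-V}, and the $R^{(n)}$ tail---estimated with $\N{Y}_2\le\nu^{1/2}\wtd\eta_p^{1/2}$, $\N{Z}_2\le\wtd\eta_p^{1/2}$, $\N{(\ol V^{(n)})^{-1}}_2\le\kappa$, and $\N{V^{(n)}(\ol V^{(n)})^{-1}}_2=\sqrt{1+\tau^2}$, and assembled to match the constants $C_T,C_\Delta$ in \cref{eq:C}.

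For part~(3), write the leading random part of $T^{(n+1)}-T^{(n)}$ as $\beta D^{(n+1)}$ and use $(Z^{(n+1)})^{\T}(\ol V^{(n)})^{-1}=(Y^{(n+1)})^{\T}V^{(n)}(\ol V^{(n)})^{-1}=(\ol Y^{(n+1)})^{\T}+(\ul Y^{(n+1)})^{\T}T^{(n)}$ to obtain
\[
D^{(n+1)}-\E{D^{(n+1)}\given\fil_n}=\ul Y\ol Y^{\T}+(\ul Y\ul Y^{\T}-\ul\Lambda)T^{(n)}-T^{(n)}(\ol Y\ol Y^{\T}-\ol\Lambda)-T^{(n)}\ol Y\ul Y^{\T}T^{(n)}
\]
(suppressing the superscript $(n+1)$ on $Y$). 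The Hadamard variance of the first summand is exactly $H_\circ=\varc{\ul Y\ol Y^{\T}}$; its $(i,j)$-entry equals $\E{Y_{p+i}^{2}Y_j^{2}}$ (mean zero by diagonality of $\Lambda$), and the sub-Gaussian fourth-moment estimate $\E{(e_k^{\T}\Lambda^{-1/2}Y)^4}\le 16\psi^4$ combined with Cauchy--Schwarz yields $\E{Y_{p+i}^{2}Y_j^{2}}\le 16\psi^4\lambda_{p+i}\lambda_j$, hence $H_\circ\le 16\psi^4 H$. The remainder $R_\circ$ then collects the entrywise variances and covariances produced by the three $T^{(n)}$-dependent summands in the display, together with all higher-order-in-$\beta$ contributions from the $O(\beta^2)$ pieces of $T^{(n+1)}-T^{(n)}$; each piece is bounded by the same catalogue of norm estimates as in part~(2), with $\sqrt{1+\tau^2}\le\kappa$ used wherever a factor $V^{(n)}(\ol V^{(n)})^{-1}$ appears.

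The main obstacle is not conceptual but purely combinatorial: tracking every term in the Neumann expansion of $[\ul V^{(n)}+\beta N](\ol V^{(n)}+\beta M)^{-1}$ through three orders in $\beta$ and again through the Hadamard-variance computation, then matching the aggregated constants against the explicit $C_V,C_\Delta,C_T$ of \cref{eq:C}, constitutes the bulk of the labor. The one conceptual pitfall is that the cancellation $\ul V^{(n)}-T^{(n)}\ol V^{(n)}=0$ must be invoked \emph{before} any norm is taken; otherwise a factor of $\kappa$ (equivalently $\sqrt{1+\tau^2}$) from the $Z^{(n+1)}(Z^{(n+1)})^{\T}$ piece would propagate through and spoil every subsequent estimate.
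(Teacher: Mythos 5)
Your proposal is correct and follows essentially the same route as the paper's own proof: the same key cancellation $\ul V-T\ol V=0$ (which the paper packages as $T_LV=0$ after an exact Sherman--Morrison--Woodbury expansion of $(\ol V+\Delta\ol V)^{-1}$ rather than your Neumann series), the same identification of the drift $\beta(\ul\Lambda T-T\ol\Lambda)$ from $\E{YY^{\T}}=\Lambda$ with the higher-order remainder absorbed into $E_T^{(n)}$, and the same splitting of the Hadamard variance into $\varc{\ul Y\ol Y^{\T}}$ plus $T$-dependent cross terms, with the sub-Gaussian fourth-moment bound and Cauchy--Schwarz giving $H_{\circ}\le 16\psi^4H$. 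The work you defer is exactly the constant bookkeeping (the $C_V,C_\Delta,C_T$ tracking) that the paper carries out.
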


\begin{proof}
	For readability,
	we will drop %the superscript
	``$\cdot^{(n)}$'', and use %the superscript
	``$\cdot\new$'' to replace ``$\cdot^{(n+1)}$'' for $V,R$, drop %the superscript
	``$\cdot^{(n+1)}$'' on $Y,\,Z$,
	and drop the conditional sign ``$\given\fil_n$'' in the computation of $\txs\E{\cdot},\var(\cdot),\cov(\cdot)$
	with the understanding that they are conditional with respect to $\fil_n$.
	Finally, for any expression or variable $F$, we define $\Delta F:=F\new-F$.
	
	Consider \cref{itm:lm:diff-T:well-defined}.
	Since $n<\txs\Nout{\sphere(\kappa )}$, we have $V\in\sphere(\kappa )$ and $\tau=\N{T}_2\le(\kappa ^2-1)^{1/2}$.
	Thus, $\N{\ol V^{-1}}_2\le \kappa $ and $T=\ul V\ol V^{-1}$ is well-defined. Recall \cref{eq:diff-V} and the partitioning
	$
	Y=\kbordermatrix{ &\scs 1\\
		\scs p   & \ol Y \\
		\scs d-p & \ul Y}, \quad
	R=\kbordermatrix{ &\scs 1\\
		\scs p   & \ol R \\
		\scs d-p & \ul R}.
		%\begin{blockarray}{cc}
			%\scs 1 & \\
			%\begin{block}{[>{\,}c<{\,}\Right{]}{,}>{\scs }c}
				%\ol Y & p \\
				%\ul Y & d-p \\
			%\end{block}
		%\end{blockarray}
		%\begin{blockarray}{cc}
			%\scs 1 & \\
			%\begin{block}{[>{\,}c<{\,}\Right{]}{.}>{\scs }c}
				%\ol R & p \\
				%\ul R & d-p \\
			%\end{block}
		%\end{blockarray}
	$
	We have
	$\Delta \ol V=\beta(\ol YZ^{\T}-(1+\tfrac{\beta}{2}Y^{\T}Y)\ol VZZ^{\T})+\ol RZ^{\T}$,
	and
	$
	\ol R=-\tfrac{\beta^2}{2}(Z^{\T}Z)(2+\beta Y^{\T}Y)\ol Y +\zeta\beta^2(Z^{\T}Z)\ol VZ+\zeta\beta^3(Z^{\T}Z)^2\ol Y
	.
	$
	Noticing $\N{\ol Y}_2\le \wtd\eta_p^{1/2}$,
	we find
	\[
	\N{\Delta\ol V}_2
	\le \beta \wtd\eta_p+\beta(1+\tfrac{\beta}{2}\nu \wtd\eta_p)\wtd\eta_p+	C_V\wtd\eta_p^2\beta^2
	\le \left[2+ \tfrac{\beta}{2}\nu \wtd\eta_p+C_V \wtd\eta_p\beta \right]\wtd\eta_p\beta
	=C_{\Delta}\wtd\eta_p\beta,
	\]
	where $C_{\Delta}$ is as in \cref{eq:C_VDelta}.
	Thus $\N{\Delta \ol V \ol V^{-1}}_2\le \N{\Delta \ol V}_2\N{\ol V^{-1}}_2\le C_{\Delta}\wtd\eta_p \beta \kappa \le1/2$ by \cref{itm:beta-C_Vdelta}.
	As a result, $\ol V\new$ is nonsingular, and
	$
	\N{(\ol V\new)^{-1}}_2\le\tfrac{\N{\ol V^{-1}}_2}{1-\N{\ol V^{-1}\Delta \ol V}_2}
	\le 2\N{\ol V^{-1}}_2.
	$
	In particular, $T\new=\ul V\new(\ol V\new)^{-1}$ is well-defined. This proves \cref{itm:lm:diff-T:well-defined}.
	
	For \cref{itm:lm:diff-T:dT}, using the Sherman-Morrison-Woodbury formula \cite[p.~95]{demm:1997}, we get
	\begin{align*}
	\Delta T %&= \ul V\new(\ol V\new)^{-1}-\ul V\ol V^{-1} \\
	&=(\ul V+\Delta \ul V)(\ol V+\Delta \ol V)^{-1}-\ul V\ol V^{-1}
	\\ &=(\ul V+\Delta \ul V)(\ol V^{-1}-\ol V^{-1}\Delta \ol V(\ol V+\Delta \ol V)^{-1})-\ul V\ol V^{-1}
	\\ &=\Delta \ul V\ol V^{-1}-\ul V\ol V^{-1}\Delta \ol V(\ol V+\Delta \ol V)^{-1}-\Delta \ul V\ol V^{-1}\Delta \ol V(\ol V+\Delta \ol V)^{-1}
	\\ &=\Delta \ul V\ol V^{-1}-\ul V\ol V^{-1}\Delta \ol V(\ol V^{-1}-\ol V^{-1}\Delta \ol V(\ol V+\Delta \ol V)^{-1})-\Delta \ul V\ol V^{-1}\Delta \ol V(\ol V+\Delta \ol V)^{-1}
	\\ &=\Delta \ul V\ol V^{-1}-T\Delta \ol V\ol V^{-1}+T\Delta \ol V\ol V^{-1}\Delta \ol V(\ol V\new)^{-1}-\Delta \ul V\ol V^{-1}\Delta \ol V(\ol V\new)^{-1}
	%\\ &=[\Delta \ul V-T\Delta \ol V]\ol V^{-1}[I-\Delta \ol V(\ol V\new)^{-1}]
	%\\ &=\Delta \ul V\ol V^{-1}-T\Delta \ol V\ol V^{-1}+T\Delta \ol V(\ol V\new)^{-1}\Delta \ol V\ol V^{-1}-\Delta \ul V(\ol V\new)^{-1}\Delta \ol V\ol V^{-1}
	\\ &=[\Delta \ul V-T\Delta \ol V][I-(\ol V\new)^{-1}\Delta \ol V]\ol V^{-1}
	.
	\end{align*}
	Write
	$ T_L=\begin{bsmallmatrix} -T & I\end{bsmallmatrix}$ and $T_R=\begin{bsmallmatrix} I \\ T \end{bsmallmatrix}$.
	Then $T_LV=0$ and $V=T_R\ol V$.
	Thus,
	\[
	\Delta T= T_L\Delta V [I-(\ol V\new)^{-1}\Delta \ol V]V^{\T}T_R.
	\]
	Since $\Delta V$ is rank-$1$, $\Delta T$ is also rank-$1$.
	%Since
	%$\Delta \ol V=\beta(\ol YZ^{\T}-(1+\tfrac{\beta}{2}Y^{\T}Y)\ol VZZ^{\T})+\ol R$,
	%and
	%$\Delta \ul V=\beta(\ul YZ^{\T}-(1+\tfrac{\beta}{2}Y^{\T}Y)\ul VZZ^{\T})+\ul R$,
	%By \cref{itm:lm:diff-V:dV} of Lemma~\ref{lm:diff-V},
	By Lemma~\ref{lm:diff-V},
	%\begin{equation}\label{eq:diff-T}
		\[
	\begin{aligned}
	\Delta T
	&=  T_L\left[ \beta YZ^{\T}-\beta(1+\tfrac{\beta}{2}Y^{\T}Y) VZZ^{\T}+ RZ^{\T} \right]
	\left[I-(\ol V\new)^{-1}\Delta \ol V\right]V^{\T}T_R
	\\ &= T_L\left[ \beta YY^{\T}V+ RZ^{\T} \right]\left[I-(\ol V\new)^{-1}\Delta \ol V\right]V^{\T}T_R
	\\ &=T_L(\beta YY^{\T}VV^{\T}+R_T)T_R
	\\ &=T_L(\beta YY^{\T}+R_T)T_R
	,
	\end{aligned}
\]
	%\end{equation}
	where
	$ %\begin{align*}
	R_T = RZ^{\T}V^{\T}
	-(\beta Y+R)Z^{\T}(\ol V\new)^{-1}\Delta \ol VV^{\T}.%[\beta(\ol YY^{\T}V-\ol VP)+\ol R]
	$ %\end{align*}
	Note that %$\txs\E{\Delta \ol V\given\fil_n}=\beta(\ol\Lambda \ol V-\ol VP_\Lambda)+E_c(V)$, and $\txs\E{\Delta \ul V\given\fil_n}=\beta(\ul\Lambda \ul V-\ul VP_\Lambda)+E_s(V)$. Then
	\begin{equation}\label{eq:dT:mainpart}
	T_LYY^{\T}T_R=\ul Y\ol Y^{\T}-T\ol Y\ul Y^{\T}T-T\ol Y\ol Y^{\T}+\ul Y\ul Y^{\T}T,
	\end{equation}
	and	
	\begin{subequations}\label{eq:dT:mainpart:E}
		\begin{alignat}{2}
		\txs\E{\ul Y\ol Y^{\T}}&=0,
		&\quad&\txs\E{T\ol Y\ol Y^{\T}}=T\txs\E{\ol Y\ol Y^{\T}}=T\ol\Lambda , 	\label{eq:dT:mainpart:E-1}	\\
		\txs\E{T\ol Y\ul Y^{\T}T}=T\txs\E{\ol Y\ul Y^{\T}}T&=0,
		&\quad&\txs\E{\ul Y\ul Y^{\T}T}=\txs\E{\ul Y\ul Y^{\T}}T=\ul\Lambda T. \label{eq:dT:mainpart:E-2}
		\end{alignat}
	\end{subequations}
	Thus,
	%\begin{align*}
	$\txs\E{\Delta T}%\given \fil_n}%&=\beta T_L\Lambda V\ol V^{-1}+E_T(V)
	=\beta (\ul\Lambda T-T\ol\Lambda )+E_T(V)$,
	%\end{align*}
	where $E_T(V)=\txs\E{T_LR_TT_R}$.
	%This is \cref{itm:lm:diff-T:EdT}.
	
	Since $V\in\sphere(\kappa )$,
	$\N{T}_2\le(\kappa ^2-1)^{1/2}$ by \cref{eq:sphereK-normT}.
	Thus
	%\begin{equation}\label{eq:normR_T}
	\[
	\begin{aligned}
	\N{R_T}_2&\le \N{R}_2\wtd\eta_p^{1/2}+[(\nu \wtd\eta_p)^{1/2} \beta+\N{R}_2]\wtd\eta_p^{1/2} 2(1+\N{T}_2^2)^{1/2}C_{\Delta}\wtd\eta_p \beta
	\\ &\le C_V\nu^{1/2}\wtd\eta_p^2\beta^2+(1+\N{T}_2^2)^{1/2}[1+C_V \wtd\eta_p\beta]2C_{\Delta}\nu^{1/2}\wtd\eta_p^2\beta^2
	\\ &\le C_T\nu^{1/2}(\wtd\eta_p \beta)^2(1+\N{T}_2^2)^{1/2},
	\end{aligned}
	\]
	%\end{equation}
	where $C_T=C_V+2C_{\Delta}(1+C_V\wtd\eta_p\beta)$.
	Therefore,
	$
	\N{E_T(V)}_2\le\txs\E{\N{T_LR_TT_R}_2}\le(1+\N{T}_2^2)\txs\E{\N{R_T}_2}.
	$
	\Cref{itm:lm:diff-T:normE_T} holds.
	For \cref{itm:lm:diff-T:normdT}, we have
	\begin{align*}
	\N{\Delta T}_2
	&\le (1+\N{T}_2^2)(\beta\N{YY^{\T}VV^{\T}}_2+\N{R_T}_2)
	\\ &\le  \beta(\nu \wtd\eta_p)^{1/2}\wtd\eta_p^{1/2}(1+\N{T}_2^2)+C_T\nu^{1/2}(\wtd\eta_p \beta)^2(1+\N{T}_2^2)^{3/2}
	\\ &\le  \nu^{1/2}\wtd\eta_p\beta(1+\N{T}_2^2)+C_T\nu^{1/2}(\wtd\eta_p \beta)^2(1+\N{T}_2^2)^{3/2}
	%\le (\kappa ^2+C_T\wtd\lambda_1 \beta)\nu_2\wtd\lambda_1 \beta
	.
	\end{align*}
	
%%%%%%%position:<<<< itm:lm:diff-T:estimate lm:lm1:ppr1
	The proof of \cref{itm:lm:diff-T:estimate} is similar to that of \cref{itm:lm:diff-T:dT} but involves more complicated calculations,
	and it is deferred to \cref{ssec:proof-of-item-itm-of-lemma-lm}.
%%%%%%%position:>>>> itm:lm:diff-T:estimate lm:lm1:ppr1
\end{proof}

\subsection{Quasi-Power Iterative Process}\label{ssec:quasi-power-iteration-process}
%\subsection{Region contraction}\label{ssec:region-contraction}
Let $D^{(n+1)}=T^{(n+1)}-\txs\E{T^{(n+1)}\given\fil_n}$.
We have
\begin{gather*}
T^{(n)}-\txs\E{T^{(n)}\given\fil_n}=0,\quad
\txs\E{D^{(n+1)}\given\fil_n}=0,  \\
\txs\E{D^{(n+1)}\circ D^{(n+1)}\given\fil_n}=\varc{T^{(n+1)}-T^{(n)}\given\fil_n}.
\end{gather*}
By \cref{itm:lm:diff-T:dT} of Lemma~\ref{lm:diff-T}, we have
\begin{align*}
T^{(n+1)}
&=  D^{(n+1)} +T^{(n)}+\txs\E{T^{(n+1)}-T^{(n)}\given\fil_n}
\\ &= D^{(n+1)} +T^{(n)}+\beta (\ul\Lambda T^{(n)}-T^{(n)}\ol\Lambda )+ E_T^{(n)}(V^{(n)})
\\ &= \opL T^{(n)}+D^{(n+1)} + E_T^{(n)}(V^{(n)}),
\end{align*}
where $\opL\opdelim T\mapsto T+\beta\ul\Lambda T-\beta T\ol\Lambda $ is a bounded linear operator.
It can be verified that $\opL T=L\circ T$, the Hadamard product of $L$ and $T$, where
$L=[\lambda_{ij}]_{(d-p)\times p}$ with $\lambda_{ij}=1+\beta \lambda_{p+i}-\beta\lambda_j$.
Moreover, it can be shown that\footnote{
	Since  $\lambda(\opL)=\txs\set{\lambda_{ij}\setdelim i=1,\dots,d-p,\,j=1,\dots,p}$,
	we have the spectral radius $\rho(\opL)=1-\beta(\lambda_p-\lambda_{p+1})$.
	Thus for any $T$,
	\[
		\N{\opL T}_{\UI}=\N{T(I-\beta\ol\Lambda )+\beta\ul\Lambda T}_{\UI}
			\le\N{I-\beta\ol\Lambda }_2\N{T}_{\UI}+\N{\beta\ul\Lambda }_2\N{T}_{\UI}
			=(1-\beta\lambda_p+\beta\lambda_{p+1})\N{T}_{\UI}
			=\rho(\opL)\N{T}_{\UI}		,
	\]
%	\begin{align*}
%	\N{\opL T}_{\UI}&=\N{T(I-\beta\ol\Lambda )+\beta\ul\Lambda T}_{\UI}
%	\\ &\le\N{I-\beta\ol\Lambda }_2\N{T}_{\UI}+\N{\beta\ul\Lambda }_2\N{T}_{\UI}
%	%\\ &
%	=(1-\beta\lambda_p+\beta\lambda_{p+1})\N{T}_{\UI}
%	=\rho(\opL)\N{T}_{\UI}		,
%	\end{align*}
	which means $\N{\opL}_{\UI}\le\rho(\opL)$. This ensures $\N{\opL}_{\UI}=\rho(\opL)$.
}
$\N{\opL}_{\UI}=\rho(\opL)=1-\beta\gamma$, where $\N{\opL}_{\UI}=\sup\limits_{\N{T}_{\UI}=1}\N{\opL T}_{\UI}$ is
an operator norm induced by the matrix norm $\N{\cdot}_{\UI}$.
Recursively, %(define $\txs\prod_{p=a}^b (\cdot)=1$ if $a>b$.)
\begin{equation}\label{eq:I1I2I3}
T^{(n)}
= \opL^nT^{(0)}+\txs\sum\limits_{s=1}^{n}\opL^{n-s} D^{(s)} +\txs\sum\limits_{s=1}^{n} \opL^{n-s}E_T^{(s-1)}(V^{(s-1)})
=: J_1+J_2+J_3.
\end{equation}
%\marginpartiny{\quad{\answer if we don't change them, we have two series of events, one \texttt{bb}, the other \texttt{cal}}}
Define events $\bbM_n(\chi)$, $\bbT_n(\chi)$, and $\bbQ_n$ as
\begin{gather}
\bbM_n(\chi)=\txs\set*{ \N{T^{(n)}-\opL^nT^{(0)}}_2\le \tfrac{1}{2}(\kappa ^2\beta^{2\chi-1}-1)^{1/2}\beta^{\chi-3\varepsilon}}, \label{eq:event-cMn}\\
\bbT_n(\chi)=\txs\set*{\N{T^{(n)}}_2\le (\kappa ^2\beta^{2\chi-1}-1)^{1/2}\beta^{\chi-3\varepsilon}},\quad
\bbQ_n=\txs\set*{n<\txs\Nqb{\Lambda}}.   \label{eq:event-cTn}
\end{gather}

\begin{lemma}\label{lm:prop3:ppr1}
	Suppose  the conditions of \emph{\Cref{thm:thm1:ppr1}} hold and that $\chi\in(5\varepsilon-1/2,0]$ and $\kappa>\sqrt{2}$.
	If $V^{(0)}\in\sphere(\kappa \beta^{\chi})$ and $n<\min\txs\set*{\txs\Nqb{\Lambda}, \txs\Nout{\sphere(\kappa\beta^{\chi})}}$,
	then
	\begin{equation}\label{eq:lm:prop3:ppr1}
	%\txs\prob{\txs\bigcap\limits_{n\le \min\txs\set{K,\txs\Nout{\sphere(\kappa \beta^{\chi})}-1}}\bbM_n(\chi+1/2) }\ge 1- 2d K\exp(-C_{\kappa} \beta^{-2\varepsilon}),
	\txs\prob{\bbM_n(\chi+1/2) }\ge 1- 2d \exp(-C_{\kappa} \gamma\kappa^{-2}\nu^{-1}\eta_p^{-2} \beta^{-2\varepsilon}),
	\end{equation}
	where $C_{\kappa}$ is as in \cref{eq:C_kappa}.
\end{lemma}

\begin{proof}
	%First, $C_T>2C_{\Delta},\kappa >\sqrt{2},\wtd\lambda_1 \ge \lambda_1$, which gives $\left(\tfrac{1}{2\kappa C_{\Delta}\wtd\lambda_1 ^{-\chi}}\right)^{\tfrac{1}{1+\chi}}>\left(\tfrac{\gamma^2}{32C_T^2\kappa ^4\wtd\lambda_1 ^{3-2\chi}}\right)^{\tfrac{1}{1+2\chi}}$.
	%Clearly, $\beta$ under the condition in this lemma also satisfies the condition in \cref{itm:lm:diff-T:estimate} of Lemma~\ref{lm:diff-T}.
	%Besides,
	Since $\kappa >\sqrt{2}$, we have $\kappa^2\beta^{2\chi}>2$ and $\kappa\beta^{\chi}<[2(\kappa^2\beta^{2\chi}-1)]^{1/2}$.
	Thus, by \cref{itm:beta-C_T},
	\[
		4C_T\kappa ^3\wtd\eta_p ^2\gamma^{-1}\beta^{1+3\chi}(\kappa ^2\beta^{2\chi}-1)^{-1/2}\beta^{-1/2-\chi}
		\le4\sqrt{2}C_T\kappa ^2\wtd\eta_p ^2\gamma^{-1}\beta^{1/2+\chi}\le 1.
	\]
	For any $n<\min\txs\set*{\txs\Nqb{\Lambda}, \txs\Nout{\sphere(\kappa\beta^{\chi})}}$,
	$V^{(n)}\in\sphere(\kappa \beta^{\chi})$ and thus
	$\N{T^{(n)}}_2\le \sqrt{\kappa ^2\beta^{2\chi}-1}$ by \cref{eq:sphereK-normT}.
	Therefore, by \cref{itm:lm:diff-T:normdT} of Lemma~\ref{lm:diff-T}, we have
	\begin{equation}\label{eq:norm-D}
	\begin{aligned}
	\N{D^{(n+1)}}_2
	&= \N*{{T^{(n+1)}-T^{(n)}-\txs\E{T^{(n+1)}-T^{(n)}\given\fil_n}}}_2
	\\ &\le  \N{T^{(n+1)}-T^{(n)}}_2 +\txs\E{\N{T^{(n+1)}-T^{(n)}}_2 \given\fil_n}
	\\ &\le 2\nu^{1/2}\wtd\eta_p \beta(1+\N{T^{(n)}}_2^2)[1+C_T\wtd\eta_p \beta(1+\N{T^{(n)}}_2^2)^{1/2}]
	\\ &\le 2\kappa ^2\nu^{1/2}\wtd\eta_p \beta^{1+2\chi}[1+C_T\kappa \wtd\eta_p \beta^{1+\chi}].
	\end{aligned}
	\end{equation}
	For any $n<\min\txs\set*{\txs\Nqb{\Lambda}, \txs\Nout{\sphere(\kappa\beta^{\chi})}}$,
	\begin{align*}
	\N{J_3}_2
	%&=\N{\txs\sum\limits_{s=1}^{n} \opL^{n-s}E_T^{(s-1)}(V^{(s-1)})}_2
	&\le \txs\sum\limits_{s=1}^{n} \N{\opL}_2 ^{n-s}\N{E_T^{(s-1)}(V^{(s-1)})}_2
	\\ &\le  C_T\nu^{1/2}\kappa ^3\wtd\eta_p^2\beta^{2+3\chi} \txs\sum\limits_{s=1}^{n}(1-\beta\gamma)^{n-s}
	\\ &\le  \tfrac{C_T\nu^{1/2}\kappa ^3\wtd\eta_p^2\beta^{2+3\chi}}{\beta \gamma}
	= C_T\nu^{1/2}\kappa ^3\wtd\eta_p^2\gamma^{-1}\beta^{1+3\chi}
	%\\ &
	\le  \tfrac{1}{4}\nu^{1/2}(\kappa ^2\beta^{2\chi}-1)^{1/2}\beta^{1/2+\chi}
	.
	\end{align*}
	Similarly,
	\begin{align*}
	\N{J_2}_2
	%=\N{\txs\sum\limits_{s=1}^{n} \opL^{n-s}D^{(s)}}_2
	\le \txs\sum\limits_{s=1}^{n} \N{\opL}_2 ^{n-s}\N{D^{(s)}}_2
	\le  \tfrac{2\kappa ^2\nu^{1/2}\wtd\eta_p \beta^{2\chi}(1+C_T\kappa \wtd\eta_p \beta^{1+\chi})}{\gamma}
	%\\ &
	\le  \tfrac{2\kappa ^2\nu^{1/2}\wtd\eta_p \beta^{2\chi}}{\gamma}+\tfrac{1}{2}\nu^{1/2}(\kappa ^2\beta^{2\chi}-1)^{1/2}\beta^{1/2+\chi}
	.
	\end{align*}
	Also,
	$
	\N{J_1}_2
	\le \N{\opL}_2 ^{n}\N{T^{(0)}}_2
	\le \N{T^{(0)}}_2
	\le \nu^{1/2}(\kappa ^2\beta^{2\chi}-1)^{1/2}.
	$
	For fixed $n>0$ and $\beta>0$,
	\[
	\txs\set*{
		M_0^{(n)}:= \opL^nT^{(0)},M_t^{(n)}:=\opL^nT^{(0)}+\txs\sum\limits\limits_{s=1}^{\min\txs\set{t,\txs\Nout{\sphere(\kappa )}-1}}\opL^{n-s}D^{(s)}
		\setdelim 1\le t\le n
	}
	\]
	forms a martingale with respect to $\fil_t$,
	because
	$
	\txs\E{\N{M_t^{(n)}}_2 }
	\le \N{J_1}_2 +\N{J_2}_2 <+\infty,
	$
	and
	\[
	\txs\E{M_{t+1}^{(n)}-M_{t}^{(n)}\given \fil_t}
	=\txs\E{ \opL^{n-t-1} D^{(t+1)}\given \fil_t}
	= \opL^{n-t-1} \txs\E{D^{(t+1)}\given \fil_t}=0.
	\]
	Use the matrix version of Azuma's inequality \cite[Section~7.2]{tropp2012user} to get,
	for any $\alpha>0$,
	\[
	\txs\prob{\N{M_n^{(n)}-M_0^{(n)}}_2\ge \alpha}\le 2d\exp(-\tfrac{\alpha^2}{2\sigma^2}),
	\]
	where
	\begin{align*}
	\sigma^2&= \txs\sum\limits_{s=1}^{\min\txs\set{n,\txs\Nout{\sphere(\kappa )}-1}}\N{\opL^{n-s}D^{(s)}}_2^2
	\\ &\le  [2\kappa ^2\nu^{1/2}\wtd\eta_p \beta^{1+2\chi}(1+C_T\kappa \wtd\eta_p \beta^{1+\chi})]^2\txs\sum\limits_{s=1}^{\min\txs\set{n,\txs\Nout{\sphere(\kappa )}-1}}(1-\beta\gamma)^{2(n-s)}
	\\ &\le  \tfrac{4\kappa ^4\nu \wtd\eta_p ^2\beta^{2+4\chi}(1+C_T \kappa \wtd\eta_p \beta^{1+\chi})^2}{\beta\gamma[2-\beta\gamma]}
	\\ &\le  \tfrac{4\kappa ^4\nu \wtd\eta_p ^2\gamma^{-1}\beta^{1+4\chi}(1+\tfrac{C_T}{2C_{\Delta}} )^2}{3-\sqrt{2}}
                \qquad\qquad\text{(by \cref{itm:beta-C_Vdelta} and $\wtd\eta_p \beta^{1/2}\le\tfrac{1}{2\kappa C_{\Delta}}$)}
	\\ &= C_{\sigma} \kappa ^4\nu \gamma^{-1}\wtd\eta_p ^2\beta^{1+4\chi},
	%\\ &= \tfrac{4\kappa ^4\beta^{4\chi}(1+C_T \kappa \wtd\lambda_1 \beta^{1+\chi})^2}{2-\beta\gamma}\what\beta
	%\\ &\le \tfrac{4\kappa ^4\beta^{4\chi}(1+\tfrac{C_T}{2C_{\Delta}} )^2}{3-\sqrt{2}}\what\beta\qquad\qquad\text{by $\wtd\lambda_1 \beta\le\tfrac{1}{2\kappa \beta^{\chi}C_{\Delta}}$}
	%\\ &\le C_{\sigma} \kappa ^4\gamma^{4\chi}\wtd\lambda_1 ^{-8\chi}\what\beta^{1+4\chi},\qquad\qquad\text{for $C_{\sigma}=\tfrac{(C_T+2C_{\Delta} )^2}{(3-\sqrt{2})C_{\Delta}^2}$}
	\end{align*}
	and $C_{\sigma}=\tfrac{(C_T+2C_{\Delta} )^2}{(3-\sqrt{2})C_{\Delta}^2}$.
	Thus, noticing $J_2=M_n^{(n)}-M_0^{(n)}$ for $n\le \txs\Nout{\sphere(\kappa )}-1$, we have
	\[
	\txs\prob{\N{J_2}_2 \ge \alpha}\le 2d\exp\left(-\tfrac{\alpha^2}{2C_{\sigma}\kappa ^4\nu \gamma^{-1}\wtd\eta_p ^2\beta^{1+4\chi}}\right).
	\]
	Choosing $\alpha=\tfrac{1}{4}(\kappa ^2\beta^{2\chi}-1)^{1/2}\beta^{\chi+1/2-3\varepsilon}$ and  noticing
$T^{(n)}-\opL^nT^{(0)}=J_2+J_3$ and $\N{J_3}_2 %\le\tfrac{1}{2}(\kappa ^2-1)^{1/2}\what\beta^{1/2}
	\le\tfrac{1}{4}(\kappa ^2\beta^{2\chi}-1)^{1/2}\beta^{\chi+1/2-3\varepsilon}$, we have
	\begin{align*}
	\txs\prob{\bbM_n(\chi+1/2)^{\rmc}}
	&=  \txs\prob{\N{T^{(n)}-\opL^nT^{(0)}}_2  \ge \tfrac{1}{2}(\kappa ^2\beta^{2\chi}-1)^{1/2}\beta^{\chi+1/2-3\varepsilon}}
	\\ &\le \txs\prob{\N{J_2}_2  \ge \tfrac{1}{4}(\kappa ^2\beta^{2\chi}-1)^{1/2}\beta^{\chi+1/2-3\varepsilon}}
	\\ &\le  2d\exp\left(-\tfrac{\kappa ^2\beta^{2\chi}-1}{32C_{\sigma}\kappa ^4\nu \gamma^{-1}\wtd\eta_p ^2\beta^{2\chi}}\beta^{-6\varepsilon}\right)
	\\ &\le  2d\exp\left(-\tfrac{\kappa ^2\beta^{2\chi}}{64C_{\sigma}\kappa ^4\nu \gamma^{-1}\wtd\eta_p ^2\beta^{2\chi}}\beta^{-6\varepsilon}\right)
	\\ &= 2d\exp(-C_{\kappa}\gamma\kappa^{-2}\nu^{-1}\eta_p^{-2} \beta^{-2\varepsilon}),
	\end{align*}
	where $C_{\kappa} =\tfrac{1}{64C_{\sigma}}$ which is the same as in \cref{eq:C_kappa}.
	%Thus,
	%\[
	%\txs\prob{\txs\bigcup\limits_{n\le \min\txs\set{K,\txs\Nout{\sphere(\kappa )}-1}}\bbM_n(1/2)^{\rmc}}\le 2d \min\txs\set{K,\txs\Nout{\sphere(\kappa )}-1}\exp(-C_{\kappa} \beta^{-2\varepsilon}).
	%\]
\end{proof}

\begin{lemma}\label{lm:prop5:ppr1}
	Suppose  the conditions of \emph{\Cref{thm:thm1:ppr1}} hold.
	If
	\[
	N_{2^{-m}(1-6\varepsilon)}<\min\txs\set*{\txs\Nqb{\Lambda}, \txs\Nout{\sphere(\kappa\beta^{\chi})}}
	\]
	and $V^{(0)}\in\sphere(\beta^{(1-2^{1-m})(3\varepsilon-1/2)}\kappa _m/2)$ with $m\ge 2$,
	then for $\kappa _m>\sqrt{2}$
	\[
	\txs\prob{\bbH_m}\ge 1-2d N_{2^{-m}(1-6\varepsilon)}\exp(-C_{\kappa}\gamma\kappa _m^{-2}\nu^{-1}\eta_p^{-2}\beta^{-2\varepsilon}),
	\]
	where
	$
	\bbH_m=\txs\set*{\txs\Nin{\sphere(\sqrt{3/2}\beta^{(1-2^{2-m})(3\varepsilon-1/2)}\kappa _m)}\le N_{2^{-m}(1-6\varepsilon)}}.
	$	
\end{lemma}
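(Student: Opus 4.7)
The plan is to apply \Cref{lm:prop3:ppr1} with $\kappa\leftarrow\kappa_m$ and $\chi\leftarrow\chi_m:=(1-2^{1-m})(3\varepsilon-1/2)$, setting $n_m:=N_{2^{-m}(1-6\varepsilon)}(\beta)$ and $\chi_{m-1}:=(1-2^{2-m})(3\varepsilon-1/2)$, then to use the decomposition $T^{(n)}=\opL^n T^{(0)}+(J_2+J_3)$ from \cref{eq:I1I2I3} to show that the deterministic drift $\opL^{n_m}T^{(0)}$ contracts the starting iterate into the target sphere while the stochastic remainder stays of the same order.  Since the hypothesis places $V^{(0)}$ inside $\sphere(\kappa_m\beta^{\chi_m}/2)\subset\sphere(\kappa_m\beta^{\chi_m})$ and guarantees $n_m<\min\set{\Nqb{\Lambda},\Nout{\sphere(\kappa_m\beta^{\chi_m})}}$, the hypotheses of \Cref{lm:prop3:ppr1} are met, and the lemma delivers, at each $n\le n_m$, a high-probability event $\bbM_n(\chi_m+1/2)$ on which the stochastic remainder $T^{(n)}-\opL^nT^{(0)}$ is small.

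The rest of the argument is purely deterministic and hinges on two exponent identities that can be verified directly from the definitions:
\[
\chi_m+2^{-m}(1-6\varepsilon)=\chi_{m-1},\qquad 2\chi_m+\tfrac12-3\varepsilon=\chi_{m-1}.
\]
The first identity, combined with the contractivity $\N{\opL}_2=1-\beta\gamma$, the definition of $n_m$, and the initial bound $\N{T^{(0)}}_2\le(\kappa_m/2)\beta^{\chi_m}$ coming from $V^{(0)}\in\sphere(\kappa_m\beta^{\chi_m}/2)$ via \cref{eq:sphereK-normT}, yields the drift estimate $\N{\opL^{n_m}T^{(0)}}_2\le(\kappa_m/2)\beta^{\chi_{m-1}}$.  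The second identity, paired with the bound
\[
\N{T^{(n_m)}-\opL^{n_m}T^{(0)}}_2\le\tfrac12(\kappa_m^2\beta^{2\chi_m}-1)^{1/2}\beta^{\chi_m+1/2-3\varepsilon}\le(\kappa_m/2)\beta^{\chi_{m-1}}
\]
that holds on $\bbM_{n_m}(\chi_m+1/2)$, gives via the triangle inequality $\N{T^{(n_m)}}_2\le\kappa_m\beta^{\chi_{m-1}}$.  Because $\kappa_m>\sqrt{2}$ and $\chi_{m-1}\le 0$ together imply $\kappa_m^2\beta^{2\chi_{m-1}}\ge 2$, squaring upgrades this to $\N{T^{(n_m)}}_2^2\le\tfrac32\kappa_m^2\beta^{2\chi_{m-1}}-1$, which by \cref{eq:sphereK-normT} is exactly $V^{(n_m)}\in\sphere(\sqrt{3/2}\beta^{\chi_{m-1}}\kappa_m)$; in particular $\bbH_m$ occurs.

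For the failure probability, I would union-bound the failure of $\bbM_n(\chi_m+1/2)$ across $n\in\{1,\dots,n_m\}$: each single invocation of \Cref{lm:prop3:ppr1} contributes $2d\exp(-C_\kappa\gamma\kappa_m^{-2}\nu^{-1}\eta_p^{-2}\beta^{-2\varepsilon})$, and aggregating over the $n_m$ time steps reproduces the claimed $2dn_m\exp(\cdots)$; the reason to carry the union bound across time rather than just at $n=n_m$ is that this also certifies the sphere-containment used implicitly to keep the stopped martingale in \Cref{lm:prop3:ppr1} from being truncated before $n_m$.  The main obstacle I anticipate is a bookkeeping subtlety: \Cref{lm:prop3:ppr1} is stated for $\chi\in(5\varepsilon-1/2,0]$, whereas $\chi_m$ can drift out of this window as $m$ approaches $M(\varepsilon)$, so one must either verify that the underlying martingale estimate of \Cref{lm:prop3:ppr1} actually survives on the slightly wider interval needed here, or absorb the gap by a minor inflation of the effective sphere so that the $\chi$ fed into \Cref{lm:prop3:ppr1} remains in the allowed range without altering the final exponent in the target-sphere characterization of $\bbH_m$.
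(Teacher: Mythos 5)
Your proposal follows essentially the same route as the paper's own proof: invoke \Cref{lm:prop3:ppr1} with $\kappa=\kappa_m$ and $\chi=\chi_m=(1-2^{1-m})(3\varepsilon-1/2)$, bound the drift $\opL^{n_m}T^{(0)}$ and the remainder on $\bbM_{n_m}(\chi_m+1/2)$ each by $(\kappa_m/2)\beta^{\chi_{m-1}}$ via the same two exponent identities, use $\kappa_m>\sqrt{2}$ (and $\beta^{2\chi_{m-1}}\ge1$) to convert the triangle-inequality bound into membership in $\sphere(\sqrt{3/2}\,\kappa_m\beta^{\chi_{m-1}})$, and union-bound over at most $N_{2^{-m}(1-6\varepsilon)}$ applications of \Cref{lm:prop3:ppr1} to get the stated probability. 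The $\chi$-range caveat you flag at the end is a genuine observation, but the paper's proof does not treat it either: it applies \Cref{lm:prop3:ppr1} with exactly this $\chi_m$, which for $m$ near $M(\varepsilon)$ drops below $5\varepsilon-1/2$, so your proposal is no less rigorous than the original on this point.
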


\begin{proof}
	By the definition of the event $\bbT_n$,
	\[
	\bbT_n(2^{-m}[1-6\varepsilon]+3\varepsilon)
	=\txs\set{\N{T^{(n)}}_2\le(\kappa _m^2-\beta^{(1-2^{1-m})(1-6\varepsilon)})^{1/2}\beta^{(1-2^{2-m})(3\varepsilon-1/2)}}
	.
	\]
	%On $\bbM_n(2^{-m}(1-6\varepsilon)+3\varepsilon)$
	%\marginpartiny{What is on $\bbM_n(2^{-m}(1-6\varepsilon)+3\varepsilon)$?\answer changed}
	For $n\ge N_{2^{-m}(1-6\varepsilon)}$ and
	$V^{(0)}\in\sphere(\beta^{(1-2^{1-m})(3\varepsilon-1/2)}\kappa _m/2)$,
	we know
	$
	\bbM_n(2^{-m}(1-6\varepsilon)+3\varepsilon)\subset\bbT_n(2^{-m}(1-6\varepsilon)+3\varepsilon)
	$
	because
	\begin{align*}
	\N{T^{(n)}}_2
	&\le \N{T^{(n)}-\opL^nT^{(0)}}_2 +\N{\opL}_2 ^n\N{T^{(0)}}_2
	\\ &\le \tfrac{1}{2}\big(\kappa _m^2-\beta^{(1-2^{1-m})(1-6\varepsilon)}\big)^{1/2}\beta^{(1-2^{2-m})(3\varepsilon-1/2)}
	\\ &\qquad +\beta^{2^{-m}(1-6\varepsilon)}\big(\tfrac{\kappa _m^2}{4}-\beta^{(1-2^{1-m})(1-6\varepsilon)}\big)^{1/2}\beta^{(1-2^{1-m})(3\varepsilon-1/2)}
	\\ &\le  \big(\kappa _m^2-\beta^{(1-2^{1-m})(1-6\varepsilon)}\big)^{1/2}\beta^{(1-2^{1-m})(3\varepsilon-1/2)}
	.
	\end{align*}
	Therefore, noticing
	\begin{align*}
	\big(\kappa _m^2-\beta^{(1-2^{1-m})(1-6\varepsilon)}\big)^{1/2}\beta^{(1-2^{2-m})(3\varepsilon-1/2)}
	&= \big(\beta^{(1-2^{2-m})(6\varepsilon-1)}\kappa _m^2-\beta^{2^{1-m}(1-6\varepsilon)}\big)^{1/2} \\
	&\le \big(\tfrac{3}{2}\beta^{(1-2^{2-m})(6\varepsilon-1)}\kappa _m^2-1\big)^{1/2},
	\end{align*}
	we get %%%SCHRINK%%%\set \Nin till the end of the proof	
	\[		
    \bbM_{N_{2^{-m}(1-6\varepsilon)}}(2^{-m}(1-6\varepsilon)+3\varepsilon)
           \subset\txs\set{\txs\what N\le N_{2^{-m}(1-6\varepsilon)}}=\bbH_m,
	\]
where $\what N=\Nin[]{\sphere(\sqrt{3/2}\beta^{(1-2^{2-m})(3\varepsilon-1/2)}\kappa _m)}$.
	Since
	\begin{multline*}
		\txs\bigcap\limits_{n\le\min\txs\set{N_{2^{-m}(1-6\varepsilon)},\txs\what N-1}}\bbM_n(2^{-m}(1-6\varepsilon)+3\varepsilon)
	\cap \bbH_m^{\rmc}
	\\\subset \txs\bigcap\limits_{n\le N_{2^{-m}(1-6\varepsilon)}}\bbM_n(2^{-m}(1-6\varepsilon)+3\varepsilon)
	\subset \bbM_{N_{2^{-m}(1-6\varepsilon)}}(2^{-m}(1-6\varepsilon)+3\varepsilon),
	\end{multline*}
	we have
	\[
		\txs\bigcap\limits_{n\le \min\txs\set{N_{2^{-m}(1-6\varepsilon)},\txs\what N-1}}\bbM_n(2^{-m}(1-6\varepsilon)+3\varepsilon)
	\subset \bbH_m.
	\]
	By Lemma~\ref{lm:prop3:ppr1} with $\chi=2^{-m}(1-6\varepsilon)+3\varepsilon-\tfrac{1}{2}=2^{-m}(1-2^{m-1})(1-6\varepsilon)$, we get
	\begin{align*}
	\txs\prob{\bbH_m^{\rmc}}
	&\le \txs\prob{\txs\bigcup\limits_{n\le \min\txs\set{N_{2^{-m}(1-6\varepsilon)},\txs\what N-1}}\bbM_n(2^{-m}(1-6\varepsilon)+3\varepsilon)^{\rmc}}
	\\ &\le \min\txs\set{N_{2^{-m}(1-6\varepsilon)},\txs\what N-1}
	\times 2d\exp(-C_{\kappa}\gamma\kappa _m^{-2}\nu^{-1}\eta_p^{-2}\beta^{-2\varepsilon})
	\\ &\le 2dN_{2^{-m}(1-6\varepsilon)}\exp(-C_{\kappa}\gamma\kappa _m^{-2}\nu^{-1}\eta_p^{-2}\beta^{-2\varepsilon})
	,
	\end{align*}
	as expected.
\end{proof}

\begin{lemma}\label{lm:prop4:ppr1}
	Suppose  the conditions of \emph{\Cref{thm:thm1:ppr1}} hold.
	If $V^{(0)}\in\sphere(\kappa /2)$ with $\kappa>2\sqrt{2}$,
	$K>N_{1-6\varepsilon}$,
	then there exists a high-probability event $\bbH_1\cap\bbQ_K=\txs\bigcap_{n\in [N_{1/2-3\varepsilon},K]}\bbT_n(1/2)\cap\bbQ_K$ satisfying
	\[
		\txs\prob{\bbH_1\cap\bbQ_K}\ge 1-
			2d K\exp(-C_{\kappa}  \gamma\kappa^{-2}\nu^{-1}\eta_p^{-2} \beta^{-2\varepsilon})
			- K(\ee d+p+1)\exp\left(-C_{\psi}\min\txs\set{\psi^{-1},\psi^{-2}}\beta^{-2\varepsilon}\right)
			,
	\]
%\begin{multline*}
%\txs\prob{\bbH_1\cap\bbQ_K}\ge 1-
%	2d K\exp(-C_{\kappa}  \gamma\kappa^{-2}\nu^{-1}\eta_p^{-2} \beta^{-2\varepsilon}) \\
%	- K(\ee d+p+1)\exp\left(-C_{\psi}\min\txs\set{\psi^{-1},\psi^{-2}}\beta^{-2\varepsilon}\right)
%	,
% \end{multline*}
% \[
%	\txs\prob{\bbH_1\cap\bbQ_K}\ge 1-
%	2d K\exp(-C_{\kappa}  \gamma\kappa^{-2}\nu^{-1}\eta_p^{-2} \beta^{-2\varepsilon})
%	- K(\ee d+p+1)\exp\left(-C_{\psi}\min\txs\set{\psi^{-1},\psi^{-2}}\beta^{-2\varepsilon}\right)
%	,
%	\]
	such that for any $n\in[N_{1-6\varepsilon},K]$,
	\[
	\txs\E{T^{(n)}\circ T^{(n)}\overevent \bbH_1\cap\bbQ_K}
	\le\opL^{2n}T^{(0)}\circ T^{(0)} +2\beta^2[I-\opL^2]^{-1}[I-\opL^{2n}]H_{\circ} +R_E,
	\]
	%	\marginpartiny{ $H_{\circ}$ and $R_{\circ}$ are the same as the ones in the proof of Lemma~\ref{lm:diff-T}?
	%	\answer changed}
	where
	$ \N{R_E}_2 \le C_{\circ}\kappa^4\gamma^{-1}\nu^2\wtd\eta_p^2\beta^{3/2-3\varepsilon}$,
	$H_{\circ}=\varc{\ul Y\ol Y^{\T}}\le 16\psi^4 H$ is as in \emph{\cref{itm:lm:diff-T:varcdT}} of \emph{Lemma~\ref{lm:diff-T}},
	and $C_{\circ}$ is as in \cref{eq:C_circE}.
\end{lemma}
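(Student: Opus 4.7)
The proof divides naturally into (i) a probability bound for $\bbH_1\cap\bbQ_K$ and (ii) a recursive expectation bound for $\E{T^{(n)}\circ T^{(n)}\overevent\bbH_1\cap\bbQ_K}$.

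For (i), the second term $K(\ee d+p+1)\exp(-C_\psi\min\set{\psi^{-1},\psi^{-2}}\beta^{-2\varepsilon})$ is supplied directly by \Cref{lm:quasi-bounded}. The first term requires uniform control of $\N{T^{(n)}}_2$ over $n\in[N_{1/2-3\varepsilon},K]$. The plan is to iterate \Cref{lm:prop5:ppr1} along a chain of scales $m=2,3,\ldots,M(\varepsilon)$, choosing the $\kappa_m$ so that each stage's output sphere (reached in $N_{2^{-m}(1-6\varepsilon)}$ steps) is contained in the next stage's input sphere; the assumption $\kappa>6^{[M(\varepsilon)-1]/2}\sqrt{2}$ in \cref{eq:omega-kappa-bound} supplies exactly enough slack for this nesting to close up. Summing the stage-lengths gives $\sum_{m=2}^{M} N_{2^{-m}(1-6\varepsilon)}\le N_{1/2-3\varepsilon}$, so by time $N_{1/2-3\varepsilon}$ the trajectory has been driven into the ``$\chi=1/2$'' sphere, establishing $\bbT_{N_{1/2-3\varepsilon}}(1/2)$. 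For persistence on the interval $[N_{1/2-3\varepsilon},K]$, I would apply \Cref{lm:prop3:ppr1} with $\chi=1/2-3\varepsilon$ restarted at each time (using the Markov property of the $V^{(n)}$-iteration), and union-bound over the $\le K$ steps, yielding the first failure contribution $2dK\exp(-C_\kappa\gamma\kappa^{-2}\nu^{-1}\eta_p^{-2}\beta^{-2\varepsilon})$.

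For (ii), the starting point is the decomposition $T^{(n+1)}=\opL T^{(n)}+D^{(n+1)}+E_T^{(n)}(V^{(n)})$ with $\E{D^{(n+1)}\given\fil_n}=0$. The decisive algebraic identity is that, since $\opL T=L\circ T$ is Hadamard multiplication, $(\opL T)\circ(\opL T)=\opL^2(T\circ T)$, where $\opL^2$ denotes Hadamard multiplication by $L^{\circ 2}$. Squaring the recursion entrywise and taking the $\fil_n$-conditional expectation gives
\begin{multline*}
\E{T^{(n+1)}\circ T^{(n+1)}\given\fil_n}=\opL^2(T^{(n)}\circ T^{(n)})+\varc{T^{(n+1)}-T^{(n)}\given\fil_n} \\
+2(\opL T^{(n)})\circ E_T^{(n)}(V^{(n)})+E_T^{(n)}(V^{(n)})\circ E_T^{(n)}(V^{(n)}),
\end{multline*}
the $D^{(n+1)}$ cross-term vanishing by centering. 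Substituting $\varc{T^{(n+1)}-T^{(n)}\given\fil_n}=\beta^2 H_\circ+R_\circ$ from \Cref{lm:diff-T} and unrolling $n$ times produces the principal term $\opL^{2n}T^{(0)}\circ T^{(0)}+2\beta^2[I-\opL^2]^{-1}[I-\opL^{2n}]H_\circ$, plus a residual accumulated from $R_\circ$ and the $E_T^{(n)}$-contributions. The residual is then controlled using the small-$T$ regime enforced by $\bbT_n(1/2)$: plugging $\N{T^{(n)}}_2\le(\kappa^2-1)^{1/2}\beta^{1/2-3\varepsilon}$ into the bounds of \Cref{lm:diff-T}, summing the geometric series $\sum_k(1-\beta\gamma)^{2k}=O((\beta\gamma)^{-1})$, and collecting constants produces the advertised bound $C_\circ\kappa^4\gamma^{-1}\nu^2\wtd\eta_p^2\beta^{3/2-3\varepsilon}$ with $C_\circ$ matching \cref{eq:C_circE}.

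The chief technical obstacle is the interplay between $\fil_n$-conditional expectations and the event $\bbH_1\cap\bbQ_K$, which is not $\fil_n$-measurable for $n<K$. My remedy is to multiply the recursion through by $\ind{\bbH_1\cap\bbQ_K}$ and exploit the inclusions $\bbH_1\cap\bbQ_K\subset\bbT_n(1/2)\cap\set{n<\Nqb{\Lambda}}$ valid for every $n$ in the range, so that the deterministic-on-event bounds used in the error estimation remain legitimate inside every conditional expectation. The ``$\overevent$''-weighting must therefore be carried through the recursion rather than dropped at an intermediate step, which also explains why the iteration is started only at $n=N_{1-6\varepsilon}$: one needs the entire orbit from this starting time forward to lie inside the tight sphere so that $\opL^{2k}H_\circ$ dominates the $R_\circ$-type residuals uniformly in $k$.
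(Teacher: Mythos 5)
Your skeleton (Lemma~\ref{lm:quasi-bounded} for the second probability term, the Doob-type decomposition $T^{(n+1)}=\opL T^{(n)}+D^{(n+1)}+E_T^{(n)}$, accumulation of $H_{\circ}$ through a geometric series) is the right one, but both halves have genuine gaps. For the probability half: since the lemma already assumes $V^{(0)}\in\sphere(\kappa/2)$, no layering is needed, and \Cref{lm:prop5:ppr1} cannot do what you ask of it anyway --- its target spheres $\sphere(\sqrt{3/2}\,\beta^{(1-2^{2-m})(3\varepsilon-1/2)}\kappa_m)$ carry nonpositive powers of $\beta$, so their radii are at least of order $\kappa_m$; the chain ends in a constant-radius sphere and never reaches the tight sphere of radius $(\kappa^2-1)^{1/2}\beta^{1/2-3\varepsilon}$ defining $\bbT_n(1/2)$. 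Your persistence step is also inadmissible: \Cref{lm:prop3:ppr1} is stated, and its proof only works, for $\chi\in(5\varepsilon-1/2,0]$ (it needs $\kappa^2\beta^{2\chi}>2$), so $\chi=1/2-3\varepsilon>0$ with fixed $\kappa$ is out of range; and restarting with $\chi=0$ from a point of size $(\kappa^2-1)^{1/2}\beta^{1/2-3\varepsilon}$ only yields $\tfrac{3}{2}(\kappa^2-1)^{1/2}\beta^{1/2-3\varepsilon}$, which exits $\bbT_n(1/2)$. The paper closes this instead by the single inclusion $\bbM_n(1/2)\subset\bbT_n(1/2)$ for $n\ge N_{1/2-3\varepsilon}$: on $\bbM_n(1/2)$, $\N{T^{(n)}}_2\le\tfrac{1}{2}(\kappa^2-1)^{1/2}\beta^{1/2-3\varepsilon}+(1-\beta\gamma)^n(\kappa^2/4-1)^{1/2}\le(\kappa^2-1)^{1/2}\beta^{1/2-3\varepsilon}$, which is exactly where the hypothesis $V^{(0)}\in\sphere(\kappa/2)$ is used; then one application of \Cref{lm:prop3:ppr1} with $\chi=0$, union-bounded over $n\le\min\set{K,\Nout{\sphere(\kappa)}-1}$, together with \Cref{lm:quasi-bounded}, gives the stated probability with no restarting.

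For the expectation half, the decisive gap is measurability. After multiplying by $\ind{\bbH_1\cap\bbQ_K}$ you still need $\E{(\opL T^{(n)})\circ D^{(n+1)}\ind{\bbH_1\cap\bbQ_K}\given\fil_n}=0$, but $\bbH_1\cap\bbQ_K$ depends on the whole path up to time $K$ and is correlated with $D^{(n+1)}$ (a large fluctuation now changes the chance of remaining in the sphere later), so the martingale cancellation you invoke is precisely what fails; the indicator cannot simply be ``carried through the recursion''. The paper avoids this by unrolling $T^{(n)}$ as in \cref{eq:I1I2I3}, using that on $\bbH_1$ one may replace $D^{(s)}$ by $D^{(s)}\ind{\bbT_{s-1}}$ with an $\fil_{s-1}$-measurable indicator for $s>N_{1/2-3\varepsilon}$, splitting the martingale sum into the blocks $\wtd J_{21}$ ($s<N_{1/2-3\varepsilon}$) and $\wtd J_{22}$ ($s\ge N_{1/2-3\varepsilon}$), and expanding the Hadamard square of the four-term sum with entrywise Cauchy--Schwarz (this is also where the factor $2$ in $2\beta^2[I-\opL^2]^{-1}[I-\opL^{2n}]H_{\circ}$ originates). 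Relatedly, your residual estimate plugs the tight-sphere bound into every step, but for $s<N_{1/2-3\varepsilon}$ only $\N{T^{(s-1)}}_2\le(\kappa^2-1)^{1/2}$ is available; summing those $R_{\circ}$-terms against $\sum_s(1-\beta\gamma)^{2(n-s)}=\OO((\beta\gamma)^{-1})$ gives an $\OO(\kappa^4\nu^2\wtd\eta_p^2\gamma^{-1}\beta)$ contribution, larger than the claimed $\beta^{3/2-3\varepsilon}$. The restriction $n\ge N_{1-6\varepsilon}$ exists to repair exactly this: it supplies the extra damping $(1-\beta\gamma)^{2(n+1-N_{1/2-3\varepsilon})}\le\beta^{1-6\varepsilon}$ on the early block (and on the $\E{\wtd J_1\circ\wtd J_3}$ cross term), not --- as you state --- to guarantee that the orbit from $N_{1-6\varepsilon}$ onward lies in the tight sphere, which already holds from $N_{1/2-3\varepsilon}$ on $\bbH_1$.
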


\begin{proof}
	First we estimate the probability of the event $\bbH_1$.
	We know $\bbT_n(1/2)\subset\txs\set*{\N{T^{(n)}}_2\le(\kappa ^2-1)^{1/2}}$.
	If $K\ge\txs\Nout{\sphere(\kappa )}$, then there exists some $n\le K$, such that $V^{(n)}\notin\sphere(\kappa )$, i.e.,
	$\N{T^{(n)}}_2> (\kappa ^2-1)^{1/2}$ by \cref{eq:sphereK-normT}.
	Thus,
	\[
	\txs\set*{K\ge\txs\Nout{\sphere(\kappa )}}\subset\txs\bigcup\limits_{n\le K}\txs\set*{\N{T^{(n)}}_2>(\kappa ^2-1)^{1/2}}\subset\txs\bigcup\limits_{n\le K}\bbT_n(1/2)^{\rmc}.
	\]
	On the other hand, %on $\bbM_n(1/2)$
	for $n\ge N_{1/2-3\varepsilon}$ and $V^{(0)}\in\sphere(\kappa /2)$,
	%\marginpartiny{ what is on $\bbM_n(1/2)$?\answer changed}
	$\bbM_n(1/2)\subset\bbT_n(1/2)$ because
	\begin{equation}\label{eq:Mn-subset-Tn}
	\begin{aligned}[b]
	\N{T^{(n)}}_2
	&\le\N{T^{(n)}-\opL^nT^{(0)}}_2 +\N{\opL}_2 ^n\N{T^{(0)}}_2\\
	&\le\tfrac{1}{2}(\kappa ^2-1)^{1/2}\beta^{1/2-3\varepsilon}+\beta^{1/2-3\varepsilon}\big(\tfrac{\kappa ^2}{4}-1\big)^{1/2}\\
	&\le (\kappa ^2-1)^{1/2}\beta^{1/2-3\varepsilon}.
	\end{aligned}
	\end{equation}
	Therefore,
	\[
	\txs\bigcap\limits_{n\in[N_{1/2-3\varepsilon},K]}\bbM_n(1/2)
	\subset \txs\bigcap\limits_{n\in[N_{1/2-3\varepsilon},K]}\bbT_n(1/2)
	\subset \txs\set{K\le\txs\Nout{\sphere(\kappa )}-1},
	\]
	and so
	\begin{align*}
	\txs\bigcap\limits_{n\le \min\txs\set{K,\txs\Nout{\sphere(\kappa )}-1}}\bbM_n(1/2)
	&\subset\txs\bigcap\limits_{n\in [N_{1/2-3\varepsilon},\min\txs\set{K,\txs\Nout{\sphere(\kappa )}-1}]}\bbM_n(1/2) \\
	&=\txs\bigcap\limits_{n\in [N_{1/2-3\varepsilon},K]}\bbM_n(1/2) \\
	&\subset\txs\bigcap\limits_{n\in [N_{1/2-3\varepsilon},K]}\bbT_n(1/2)
	=:\bbH_1.
	\end{align*}
	By Lemma~\ref{lm:prop3:ppr1} with $\chi=0$, we have
	\begin{align*}
	\MoveEqLeft[4] \txs\prob{\txs\bigcup\limits_{n\le \min\txs\set{K,\txs\Nout{\sphere(\kappa )}-1}}\bbM_n(1/2)^{\rmc}\cap\bbQ_K}  \\
	&\le \min\txs\set{K,\txs\Nout{\sphere(\kappa )}-1}\cdot 2d\exp(-C_{\kappa} \gamma\kappa^{-2}\nu^{-1}\eta_p^{-2} \beta^{-2\varepsilon}) \\
	&= 2d K\exp(-C_{\kappa}  \gamma\kappa^{-2}\nu^{-1}\eta_p^{-2} \beta^{-2\varepsilon}).
	\end{align*}
	Thus, by Lemma~\ref{lm:quasi-bounded},
	\begin{align*}
	\txs\prob{(\bbH_1\cap\bbQ_K)^{\rmc}}&=\txs\prob{\bbH_1^{\rmc}\cup\bbQ_K^{\rmc}}
	   =\txs\prob{\bbH_1^{\rmc}\cap\bbQ_K}+\txs\prob{\bbQ_K^{\rmc}}
	\\ &\le\txs\prob{\txs\bigcup\limits_{n\le \min\txs\set{K,\txs\Nout{\sphere(\kappa )}-1}}\bbM_n(1/2)^{\rmc}\cap\bbQ_K}
	+\txs\prob{\bbQ_K^{\rmc}}
	\\ &\le 2d K\exp(-C_{\kappa}  \gamma\kappa^{-2}\nu^{-1}\eta_p^{-2} \beta^{-2\varepsilon})
	%+ \ee K(d+p)\exp\left(-C_{\psi}\psi^{-1}\beta^{-2\varepsilon}\right)
	+K(\ee d+p+1)\exp\left(-C_{\psi}\min\txs\set{\psi^{-1},\psi^{-2}}\beta^{-2\varepsilon}\right)
	.
	\end{align*}
	
	Next we estimate the expectation.
	Since
	%\marginpartiny{ we haven't explain $\ind{\bbT_{n-1}}$ yet. Is it the indicator?\answer yes}
	\[
	\bbH_1=\txs\bigcap\limits_{n\in[N_{1/2-3\varepsilon},K]}\bbT_n(1/2)
	\subset\txs\bigcap\limits_{n\in[N_{1/2-3\varepsilon},K]}\txs\set*{\ind{\bbT_{n-1}}D^{(n)}=D^{(n)}},
	\]
	we have for $n\in[N_{1/2-3\varepsilon},K]$
	\begin{align*}
	T^{(n)}\ind{\bbH_1\cap\bbQ_K}
	&= \ind{\bbQ_K}\Big( \opL^nT^{(0)}+\txs\sum\limits_{s=1}^{N_{1/2-3\varepsilon}-1}\opL^{n-s} D^{(s)}+\txs\sum\limits_{s=N_{1/2-3\varepsilon}}^{n}\opL^{n-s} D^{(s)}\ind{\bbT_{s-1}}
	\txs\sum\limits_{s=1}^{n} \opL^{n-s}E_T^{(s-1)}(V^{(s-1)}) \Big)
	\\ &=: \wtd J_1+\wtd J_{21}+\wtd J_{22}+\wtd J_3.
	\end{align*}
	%\marginpartiny{Are $J_1$ and $J_3$ here the same as the ones in \cref{eq:I1I2I3}? \answer No, they are different. I put a tilde.}
	In what follows, we simply write $E_T^{(n)}=E_T^{(n)}(V^{(n)})$ for convenience.
	Then %by \cref{eq:I1I2I3},
	\begin{align*}
	\txs\E{T^{(n)}\circ T^{(n)}\overevent \bbH_1\cap\bbQ_K}
	 &= \txs\E{T^{(n)}\circ T^{(n)}\ind{\bbH_1\cap\bbQ_K}}		\\
	&=\txs\E{\wtd J_1\circ \wtd J_1}+2\txs\E{\wtd J_1\circ \wtd J_{21}}+2\txs\E{\wtd J_1\circ \wtd J_{22}}+2\txs\E{\wtd J_1\circ \wtd J_3}		\\
	&\quad+\txs\E{[\wtd J_{21}+\wtd J_{22}]\circ [\wtd J_{21}+\wtd J_{22}]}			+2\txs\E{[\wtd J_{21}+\wtd J_{22}]\circ \wtd J_3} +\txs\E{\wtd J_3\circ \wtd J_3}		\\
	&\le\txs\E{\wtd J_1\circ \wtd J_1}+2\txs\E{\wtd J_1\circ \wtd J_{21}}+2\txs\E{\wtd J_1\circ \wtd J_{22}}+2\txs\E{\wtd J_1\circ \wtd J_3}			\\
	&\quad+2\txs\E{\wtd J_{21}\circ \wtd J_{21}}+4\txs\E{\wtd J_{21}\circ \wtd J_{22}} +2\txs\E{\wtd J_{22}\circ \wtd J_{22}} +2\txs\E{\wtd J_3\circ \wtd J_3}		.
	\end{align*}
	%In the following, we estimate each summand above for $n\in[N_{1-6\varepsilon},K]$. We have the following.
%%%%%%%position:<<<< lm:prop4:ppr1
	Each summand above for $n\in[N_{1-6\varepsilon},K]$ can be estimated with careful calculation (see \cref{ssec:estimation-in-proof-of-lemma-lm}), which reads
	\begin{enumerate}
		\item
		$ \txs\E{\wtd J_1\circ \wtd J_1}=\opL^{2n}T^{(0)}\circ T^{(0)}$.
		\item
		$ \txs\E{\wtd J_1\circ \wtd J_{21}}=0$.
		\item
		$\txs\E{\wtd J_1\circ \wtd J_{22}}=0$.
		\item
		$\N{\txs\E{\wtd J_1\circ \wtd J_3}}_2
		\le \tfrac{1}{2}C_T\nu^{1/2}\wtd\eta_p ^2\gamma^{-1}\kappa ^4\beta^{2-6\varepsilon}$.
		\item
		$\txs\E{\wtd J_{21}\circ \wtd J_{22}}=0$.
	\item $\txs\E{\wtd J_{21}\circ \wtd J_{21}}=\beta^2\txs\sum\limits_{s=1}^{N_{1/2-3\varepsilon}-1}\opL^{2(n-s)}H_{\circ} + E_{21}$,
		where
		\[
			\N{E_{21}}_2
			\le \left(\tfrac{29+8\sqrt{2}}{64}+2C_T\kappa(\wtd\eta_p\beta)+C_T^2\kappa ^2(\wtd\eta_p\beta)^2\right)\gamma^{-1}\kappa^4\nu^2\wtd\eta_p^2\beta^{2-6\varepsilon}
			.
		\]
		\item
			$
			 \txs\E{\wtd J_{22}\circ \wtd J_{22}}
			 =\beta^2\txs\sum\limits_{s=N_{1/2-3\varepsilon}}^{n}\opL^{2(n-s)}H_{\circ} + E_{22},
			 $
			 where
%\begin{align*}
%&\N{E_{22}}_2 \\
%&\le  \tfrac{1}{3-\sqrt{2}}\left( \tfrac{29+8\sqrt{2}}{32}
%                                  +4C_T\kappa\wtd\eta_p\beta^{1/2+3\varepsilon}+2C_T^2\kappa ^2\wtd\eta_p^2\beta^{3/2+3\varepsilon}\right)
%                                  \gamma^{-1}\kappa^4\nu^2\wtd\eta_p^2\beta^{3/2-3\varepsilon}.
%\end{align*}
			 \[
				 \N{E_{22}}_2
				 \le  \tfrac{1}{3-\sqrt{2}}\left( \tfrac{29+8\sqrt{2}}{32}+4C_T\kappa\wtd\eta_p\beta^{1/2+3\varepsilon}+2C_T^2\kappa ^2\wtd\eta_p^2\beta^{3/2+3\varepsilon}  \right)\gamma^{-1}\kappa^4\nu^2\wtd\eta_p^2\beta^{3/2-3\varepsilon}
				 .
			 \]
		\item
		$\N{\txs\E{\wtd J_3\circ \wtd J_3}}_2
		\le  \tfrac{1}{3-\sqrt{2}}C_T^2\nu\wtd\eta_p^4\gamma^{-1}\kappa ^6\beta^3
		$.
	\end{enumerate}
%%%%%%%position:>>>> lm:prop4:ppr1
	Collecting all estimates together, we obtain
	\begin{align*}
	\txs\E{T^{(n)}\circ T^{(n)}\overevent \bbH_1\cap\bbQ_K}
	%&\le\txs\E{\wtd J_1\circ \wtd J_1}+2\txs\E{\wtd J_1\circ \wtd J_{21}}+2\txs\E{\wtd J_1\circ \wtd J_{22}}+2\txs\E{\wtd J_1\circ \wtd J_3}
	%\\ &\qquad+2\txs\E{\wtd J_{21}\circ \wtd J_{21}}+4\txs\E{\wtd J_{21}\circ \wtd J_{22}} +2\txs\E{\wtd J_{22}\circ \wtd J_{22}} +2\txs\E{\wtd J_3\circ \wtd J_3}
	&\le\opL^{2n}T^{(0)}\circ T^{(0)} +2\beta^2\txs\sum\limits_{s=1}^{n}\opL^{2(n-s)}H_{\circ} +R_E
	\\ &\le\opL^{2n}T^{(0)}\circ T^{(0)} +2\beta^2[I-\opL^2]^{-1}[I-\opL^{2n}]H_{\circ} +R_E,
	\end{align*}
	where, by \cref{itm:beta-C_Vdelta}, $2C_{\Delta} \kappa \wtd\eta_p\beta^{1/2}\le 1$, and
	\begin{align*}
	\N{R_E}_2
	%&\le 2\bigg[
	%\tfrac{C_T}{2}\kappa^4\nu^{1/2}\wtd\eta_p^2\gamma^{-1}\beta^{2-6\varepsilon} +
	%\left(\tfrac{29+8\sqrt{2}}{64}+2C_T\kappa \wtd\eta_p \beta+C_T^2\kappa ^2(\wtd\eta_p \beta)^2\right)\kappa ^4\nu^2\wtd\eta_p^2\gamma^{-1}\beta^{2-6\varepsilon} \\
	%&\qquad+\tfrac{2}{3-\sqrt{2}}\left(\tfrac{29+8\sqrt{2}}{64}+2C_T\kappa \wtd\eta_p \beta^{1/2+3\varepsilon}+C_T^2\kappa ^2\wtd\eta_p^2 \beta^{3/2+3\varepsilon}\right)\kappa ^4\nu^2\wtd\eta_p^2\gamma^{-1}\beta^{3/2-3\varepsilon} \\
	%&\qquad+\tfrac{C_T^2}{3-\sqrt{2}}\nu\wtd\eta_p^4\gamma^{-1}\kappa ^6\beta^3	\bigg] 		\\
	&\le 2\bigg[
	\tfrac{C_T}{2}\beta^{1/2-3\varepsilon} +
	\left(\tfrac{29+8\sqrt{2}}{64}+2C_T\kappa \wtd\eta_p \beta+C_T^2\kappa ^2(\wtd\eta_p \beta)^2\right)\beta^{1/2-3\varepsilon}+
	\tfrac{C_T^2}{3-\sqrt{2}}\wtd\eta_p^2\kappa ^2\beta^{3/2+3\varepsilon} \\
	&\qquad+\tfrac{2}{3-\sqrt{2}}\left(\tfrac{29+8\sqrt{2}}{64}+2C_T\kappa \wtd\eta_p \beta^{1/2+3\varepsilon}+C_T^2\kappa ^2\wtd\eta_p^2 \beta^{3/2+3\varepsilon}\right)
	\bigg]\kappa^4\gamma^{-1}\nu^2\wtd\eta_p^2\beta^{3/2-3\varepsilon}		\\
	&\le 2\bigg[
	\tfrac{C_T}{2}\beta^{1/2-3\varepsilon} +
	\left(\tfrac{29+8\sqrt{2}}{64}+\tfrac{C_T}{C_{\Delta}}\beta^{1/2}+\tfrac{C_T^2}{4C_{\Delta}^2}\beta\right)\beta^{1/2-3\varepsilon}+
	\tfrac{C_T^2}{4(3-\sqrt{2})C_{\Delta}^2}\beta^{1/2+3\varepsilon}\\
	&\qquad+\tfrac{2}{3-\sqrt{2}}\left(\tfrac{29+8\sqrt{2}}{64}+\tfrac{C_T}{C_{\Delta}}\beta^{3\varepsilon}+\tfrac{C_T^2}{4C_{\Delta}^2}\beta^{1/2+3\varepsilon}\right)
	\bigg]\kappa^4\gamma^{-1}\nu^2\wtd\eta_p^2\beta^{3/2-3\varepsilon} \\
	&= C_{\circ}\kappa^4\gamma^{-1}\nu^2\wtd\eta_p^2\beta^{3/2-3\varepsilon},
	\end{align*}
	where $C_{\circ}$ is as given in \cref{eq:C_circE}.
\end{proof}

\subsection{Proof of Theorem \ref{thm:thm1:ppr1}}\label{ssec:proof1st}
%\begin{proof}[Proof of \Cref{thm:thm1:ppr1}]
Write $\wtd N_s=\tfrac{s\ln\beta }{\ln (1-\beta\gamma)}$. Then $(1-\beta\gamma)^{\wtd N_s}= \beta^s$ and
$N_s = \ceil*{\wtd N_s}$, where $N_s$ is defined in \cref{eq:Ns-dfn}.
It can be verified that $\wtd N_{s_1}+\wtd N_{s_2} = \wtd N_{s_1+s_2}$ for any $s_1,s_2$.

Write $\kappa _m=6^{(1-m)/2}\kappa$ for $m=1,\dots,M\equiv M(\epsilon)$.
Since $d\beta^{1-7\varepsilon}\le (\sqrt{2}-1)\lambda_1^{-1}\omega$,
we know
\[
\phi d^{1/2}\le \phi \omega^{1/2}\beta^{7\varepsilon/2-1/2}\le \beta^{(1-2^{1-M})(3\varepsilon-1/2)}\kappa _M/2.
\]
The key to our proof is to divide the whole process into $M$ segments of iterations.
Thanks to the strong Markov property of the process, we can use the final value of current segment as the initial guess of
the very next one.
%Write $n_m=\min\txs\set*{\txs\Nin{\sphere(\sqrt{3/2}\beta^{(1-2^{2-m})(\varepsilon-1/2)}\kappa _m)}, \txs\sum\limits_{i=1}^m N_{2^{-m}(1-2\varepsilon)}}$.
%Consider the $m$-th step.
By Lemma~\ref{lm:prop5:ppr1}, after the first segment of
\[
n_1:=\min\txs\set*{\txs\Nin{\sphere(\sqrt{3/2}\beta^{(1-2^{2-M})(3\varepsilon-1/2)}\kappa _1)},N_{2^{-M}(1-6\varepsilon)}}
\]
iterations,
$V^{(n_1)}$ lies in $\sphere(\sqrt{3/2}\beta^{(1-2^{2-M})(3\varepsilon-1/2)}\kappa _1)=\sphere(\beta^{(1-2^{2-M})(3\varepsilon-1/2)}\kappa _2/2)$
with high probability, 	which will be a good initial guess for the second segment. In general,
the $i$th segment of iterations starts with $V^{(n_{i-1})}$ and ends with $V^{(n_i)}$, where
\[
n_i=\min\txs\set*{\txs\Nin{\sphere(\beta^{(1-2^{i+1-M})(3\varepsilon-1/2)}\kappa_{i+1}/2)},\ceil*{\txs\sum\limits_{m=M+1-i}^M \wtd N_{2^{-m}(1-6\varepsilon)}}}.
\]
At the end of the $(M-1)$st segment of iterations, $V^{(n_{M-1})}$ is produced and it is going to be used as an initial guess for the last step,
at which
we can apply Lemma~\ref{lm:prop4:ppr1}.
%\marginpartiny{ $n_i=?$\answer }
%{\scriptsize \color{blue} $n_i=\min\txs\set*{\txs\Nin{\sphere(\beta^{(1-2^{i+1-M})(3\varepsilon-1/2)}\kappa_{i+1}/2)},\ceil*{\txs\sum\limits_{m=M+1-i}^M \wtd N_{2^{-m}(1-6\varepsilon)}}}$}
Now $n_{M-1}=\min\txs\set*{\txs\Nin{\sphere(\kappa_M /2)},\what K}$,
where $\what K=\ceil*{\txs\sum\limits_{m=2}^M \wtd N_{2^{-m}(1-6\varepsilon)}}=\ceil*{\wtd N_{(1-2^{1-M})(1/2-3\varepsilon)}}$.
By $2^{2-M}\ge\tfrac{\varepsilon/2}{1/2-3\varepsilon}\ge2^{1-M}$,
we have
\[
N_{1/2-7\varepsilon/2}
= \ceil*{\wtd N_{1/2-7\varepsilon/2}}
\le \what K
\le \ceil*{\wtd N_{1/2-13\varepsilon/4}}
\le N_{1/2-13\varepsilon/4}
.
\]
Let $\what N=\Nin{\sphere(\sqrt{3/2}\beta^{(1-2^{2-m})(3\varepsilon-1/2)}\kappa _{M+1-m})}$, and
\begin{align*}
\wtd\bbH_m&=\txs\set*{\txs\what N\le \wtd N_{2^{-m}(1-6\varepsilon)}+n_{M-m}}\,\,\text{for $2\le m\le M$},\\
\wtd\bbH_1&= \txs\bigcap\limits_{n\in [N_{1/2-3\varepsilon},K-\txs\Nin{\sphere(\kappa_M /2)}]}\bbT_{n+\txs\Nin{\sphere(\kappa_M /2)}}(1/2),\\
\bbH&=\txs\bigcap\limits_{m=1}^M \wtd\bbH_m\cap\bbQ_K,
\end{align*}
where $n_0=0$.
We have
\begin{align*}
\txs\prob{\bbH^{\rmc}}
&=\txs\prob{\txs\bigcup\limits_{m=1}^M \wtd\bbH_m^{\rmc}\cup\bbQ_K^{\rmc}}\le \txs\sum\limits_{m=1}^M \txs\prob{\wtd\bbH_m^{\rmc}\cup\bbQ_K^{\rmc}}		\\
&\le\txs\sum\limits_{m=2}^M 2d N_{2^{-m}(1-6\varepsilon)}\exp(-C_{\kappa}\gamma\kappa _{M+1-m}^{-2}\nu^{-1}\eta_p^{-2}\beta^{-2\varepsilon}) \\
&\quad+2d \Big(K-\txs\sum\limits_{m=2}^M N_{2^{-m}(1-6\varepsilon)}\Big)\exp(-C_{\kappa}  \gamma\kappa_M^{-2}\nu^{-1}\eta_p^{-2} \beta^{-2\varepsilon}) \\
&\quad+ K(\ee d+p+1)\exp\left(-C_{\psi}\min\txs\set{\psi^{-1},\psi^{-2}}\beta^{-2\varepsilon}\right)
\\
&\le 2dK\exp(-C_{\kappa}  \gamma\kappa^{-2}\nu^{-1}\eta_p^{-2} \beta^{-2\varepsilon})
+ K(\ee d+p+1)\exp\left(-C_{\psi}\min\txs\set{\psi^{-1},\psi^{-2}}\beta^{-2\varepsilon}\right)
\\ &\le 2dK\exp(-C_{\kappa} 4\sqrt{2}C_T\nu^{-1}\beta^{\varepsilon}\beta^{-2\varepsilon})
+ K(\ee d+p+1)\exp\left(-C_{\psi}\min\txs\set{\psi^{-1},\psi^{-2}}\beta^{-2\varepsilon}\right) \\
&\hspace*{4cm}\qquad \text{(by \cref{itm:beta-C_T})}
\\ &\le 2dK\exp(-4\sqrt{2}C_TC_{\kappa} \nu^{-1}\beta^{-\varepsilon})
+ K(\ee d+p+1)\exp\left(-C_{\psi}\min\txs\set{\psi^{-1},\psi^{-2}}\beta^{-\varepsilon}\right)
\\ &\le K[(2+\ee)d+ p+1]\exp(-\max\txs\set{C_{\nu} \nu^{-1},C_\psi\min\txs\set{\psi^{-1},\psi^{-2}}}\beta^{-\varepsilon})
,
\end{align*}
where $C_{\nu}=4\sqrt{2}C_TC_{\kappa}$ is as given in \cref{eq:C_nu}.

Set $\bbH_{n'}':=\txs\set{\txs\Nin{\sphere(\kappa /2)}=n'}$.
%\marginpartiny{ $\what K$ is ...?\answer $\what K$ is defined in Line~813.}
If $n'> \what K$, then $\bbH\cap \bbH_{n'}'=\emptyset$. Otherwise if $n'\le \what K$, then, by Lemma~\ref{lm:prop5:ppr1},
$V^{(n')}\in\sphere(\kappa_M /2)$ and then $\N{T^{(n')}}_{\F}^2\le p( (\tfrac{\kappa_M }{2})^2-1)$.
Thus,
\[
\phi^2d(1-\beta\gamma)^{2(n'-1)}\ge\phi^2d(1-\beta\gamma)^{2(\what K-1)}
>\Big(\tfrac{\kappa_M }{2}\Big)^2\ge\tfrac{1}{p}\N{T^{(n')}}_{\F}^2.
\]
Hence, for any $n\in [N_{1-6\varepsilon}+\txs\Nin{\sphere(\kappa /2)},K]\subset[N_{1-6\varepsilon}+n',K+n']$, by Lemma~\ref{lm:prop4:ppr1}, we have
%\begin{multline*}
%\txs\E{T^{(n)}\circ T^{(n)}\ind{\bbH}\given\bbH_{n'}'\cap\fil_{n'}} \\
%\le\opL^{2(n-n')}T^{(n')}\circ T^{(n')} +2\beta^2[I-\opL^2]^{-1}[I-\opL^{2(n-n')}]H_{\circ} +R_E.
%\end{multline*}
\[
\txs\E{T^{(n)}\circ T^{(n)}\ind{\bbH}\given\bbH_{n'}'\cap\fil_{n'}}
\le\opL^{2(n-n')}T^{(n')}\circ T^{(n')} +2\beta^2[I-\opL^2]^{-1}[I-\opL^{2(n-n')}]H_{\circ} +R_E.
\]
%\note{
Recall that $\fil_{n'}$ is the $\sigma$-algebra filtration, i.e., the information known by step $n'$.
%}
Introduce $\Sum(A)$ for the sum of all the entries of a matrix $A$. In particular, $\Sum(A\circ A)=\N{A}_{\F}^2$. We have
\begin{align*}
\MoveEqLeft\txs\E{\N{T^{(n)}}_{\F}^2\ind{\bbH}\given\bbH_{n'}'}
\\ &=\txs\E{\txs\E{\N{T^{(n)}}_{\F}^2\ind{\bbH}\given \bbH_{n'}'\cap\fil_{n'}}}
\\ &\le\txs\E{(1-\beta\gamma)^{2(n-n')}\N{T^{(n')}}_{\F}^2 +2\beta^2\Sum([I-\opL^2]^{-1}H_{\circ}) +\Sum(R_E)}
\\ &\le(1-\beta\gamma)^{2(n-1)}p\phi^2d +2\beta^2\Sum([I-\opL^2]^{-1}H_{\circ}) +\sqrt{p(d-p)}\N{R_E}_{\F}
\\ &\le(1-\beta\gamma)^{2(n-1)}p\phi^2d +2\beta^2\tfrac{1}{\beta(2-\lambda_1 \beta)}\Sum(G\circ H_{\circ})
+\sqrt{p(d-p)}C_{\circ}\sqrt{p}\kappa ^4(\nu \wtd\eta_p)^2\gamma^{-1}\beta^{3/2-3\varepsilon}
,
\end{align*}
%\marginpartiny{ you used $\N{R_E}_{\F}\le \sqrt{p(d-p)}$. Is it obvious?\answer Here $\Sum(R_E)=\txs\sum\limits r_{ij}$ while $\N{R}_{\F}=\sqrt{\txs\sum\limits r_{ij}^2}$. It is Cauchy ineq..}
where $G=[\gamma_{ij}]_{(d-p)\times p}$ with $\gamma_{ij}=\tfrac{1}{\lambda_j-\lambda_{p+i}}$.
Putting all together, we get
\begin{align*}
\txs\E{\N{T^{(n)}}_{\F}^2\overevent \bbH}
  &=\txs\E{\txs\E{\N{T^{(n)}}_{\F}^2\ind{\bbH} \given \bbH_{n'}'}}
\\ &\le(1-\beta\gamma)^{2(n-1)}p\phi^2d +\tfrac{2\beta}{2-\lambda_1 \beta}\Sum(G\circ H_{\circ})  +C_{\circ}\kappa ^4\nu^2\wtd\eta_p^2p\sqrt{d-p}\gamma^{-1}\beta^{3/2-3\varepsilon}
.
\end{align*}
Note that on $\bbH$, $\txs\Nin{\sphere(\kappa /2)}\le\what K$.
So the expectation is valid for any $n\in [N_{1-2\varepsilon}+\what K, K]$.
Finally we estimate $\Sum(G\circ H_{\circ})$.
By Lemma~\ref{lm:diff-T}, $H_{\circ}\le 16\psi^4 H$, and hence
%\begin{align*}
%\Sum(G\circ H_{\circ})
%\le p(d-p)\max\limits_{i,j}\tfrac{16\psi^4\lambda_{p+i}\lambda_j}{\lambda_j-\lambda_{p+i}}
%&= \max\limits_{i,j}\tfrac{16\psi^4p(d-p)}{\tfrac{1}{\lambda_{p+i}}-\tfrac{1}{\lambda_j}} \\
%&= \tfrac{16\psi^4p(d-p)}{\tfrac{1}{\lambda_{p+1}}-\tfrac{1}{\lambda_p}}
%= 16\psi^4p(d-p)\tfrac{\lambda_{p+1}\lambda_p}{\lambda_p-\lambda_{p+1}}
%.
%\end{align*}
\[
\Sum(G\circ H_{\circ})
\le \txs\sum\limits_{j=1}^{p}\txs\sum\limits_{i=1}^{d-p} \tfrac{16\psi^4\lambda_{p+i}\lambda_j}{\lambda_j-\lambda_{p+i}}
=16\psi^4\varphi(p,d;\Lambda).
\]
%	Note that
%	\[
%		p(d-p)\tfrac{\lambda_{d}\lambda_1}{\lambda_1-\lambda_{d}}
%		=
%		\txs\sum\limits_{j=1}^{p}\txs\sum\limits_{i=1}^{d-p} \tfrac{1}{\tfrac{1}{\lambda_{d}}-\tfrac{1}{\lambda_1}}
%		\le
%		\txs\sum\limits_{j=1}^{p}\txs\sum\limits_{i=1}^{d-p} \tfrac{1}{\tfrac{1}{\lambda_{p+i}}-\tfrac{1}{\lambda_j}}
%		\le
%		\txs\sum\limits_{j=1}^{p}\txs\sum\limits_{i=1}^{d-p} \tfrac{1}{\tfrac{1}{\lambda_{p+1}}-\tfrac{1}{\lambda_p}}
%		= p(d-p)\tfrac{\lambda_{p+1}\lambda_p}{\lambda_p-\lambda_{p+1}}
%		.
%	\]
This completes the proof.
%\end{proof}

% you can choose not to have a title for an appendix
% if you want by leaving the argument blank
\section{Proofs of Theorems \ref{thm:thm2:ppr1} and \ref{thm:thm3:ppr1}}
\label{sec:proof-of-cref-thm-thm2-ppr1}
To prove \Cref{thm:thm2:ppr1}, we will first prove that it is a high-probability event that $V^{(0)}$ satisfies the initial
condition there, which is the result of Lemma~\ref{lm:lm1:ppr1} below.
Then, together with \Cref{thm:thm1:ppr1}, we will have its conclusion.
During estimating the probability, we need a property on the Gaussian hypergeometric function of a matrix argument,
as in Lemma~\ref{lm:2F1}.

The gamma function and the multivariate gamma function are
\[
\Gamma(x):=\txs\int_0^\infty t^{x-1}\exp(-t)\diff t, \quad
\Gamma_m(x):=\pi^{m(m-1)/4}\txs\prod_{i=1}^{m} \Gamma\Big(x-\tfrac{i-1}{2}\Big),
\]
respectively. Denote  by $\hyperg$ the Gaussian hypergeometric function of matrix argument
%\marginpartiny{ Are  $\hyperg[1][0]$ and $\hyperg[1][1]$ also Gaussian hypergeometric functions of matrix argument? \answer
%they are (generalized) hypergeometric funcs, but not Gaussian h.f.}
(see \cite[Definition~7.3.1]{muirhead1982aspects}), and also by $\hyperg[1][0]$ and $\hyperg[1][1]$
the generalized hypergeometric functions  that will be used later.

%%%%%%%position:<<<< lm:2F1
\begin{lemma}\label{lm:2F1}
	For any scalar $a,b,c$ and a symmetric matrix $T\in \bbR^{m\times m}$,
	%\marginpartiny{ just to make sure, $\hyperg(a,b;c;T)$, NOT $\hyperg(a,b,c;T)$\answer yes, all hyperg func has the same pattern $\hyperg[m][n](\text{$m$ para};\text{$n$ para};T)$}
	\begin{multline} \label{eq:hyperg-reflect}
	\hyperg(a,b;c;T)=\tfrac{\Gamma_m(c-a-b)\Gamma_m(c)}{\Gamma_m(c-a)\Gamma_m(c-b)}\hyperg(a,b;a+b-c+\tfrac{m+1}{2};I-T)
	\\+\tfrac{\Gamma_m(a+b-c)\Gamma_m(c)}{\Gamma_m(a)\Gamma_m(b)}\det(I-T)^{c-a-b}\hyperg(c-a,c-b;c-a-b+\tfrac{m+1}{2};I-T).
	\end{multline}
\end{lemma}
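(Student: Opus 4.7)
The plan is to prove this reflection identity by adapting the classical Gauss connection formula for the scalar ${}_2F_1$ to the matrix argument setting, using the Euler-type integral representation of $\hyperg$ due to Muirhead.

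First I would invoke Muirhead's Euler integral representation (Muirhead, \emph{Aspects of Multivariate Statistical Theory}, Theorem 7.4.2),
\begin{equation*}
\hyperg(a,b;c;T) = \frac{\Gamma_m(c)}{\Gamma_m(b)\Gamma_m(c-b)} \int_{0<S<I_m} \det(S)^{b-(m+1)/2} \det(I-S)^{c-b-(m+1)/2} \det(I-ST)^{-a}\, dS,
\end{equation*}
valid when $\Re(b), \Re(c-b) > (m-1)/2$, with the domain being symmetric matrices $S$ satisfying $0 < S < I_m$. Second, I would apply the change of variables $S \mapsto I - S$, which is a measure-preserving involution on the domain $\{0 < S < I\}$. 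Under this substitution, $\det(S)$ and $\det(I-S)$ swap roles while $\det(I-ST) = \det(I-T)\det(I - (I-T)^{-1}(I-S)T)$, revealing the factor $\det(I-T)^{c-a-b}$ after appropriate regrouping. The resulting integral is not immediately in standard form, so the core of the proof is to decompose it into two pieces that, separately, can be identified with the two integrals defining $\hyperg(a,b;\,a+b-c+(m+1)/2;\,I-T)$ and $\hyperg(c-a,c-b;\,c-a-b+(m+1)/2;\,I-T)$.

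A cleaner, fully rigorous alternative is the series-manipulation route. Both sides are holomorphic in $a,b,c$ away from poles of the appearing $\Gamma_m$ factors, so it suffices to verify the identity on a dense set of parameters where absolute convergence holds. Using the zonal polynomial expansion
\begin{equation*}
\hyperg(a,b;c;T) = \sum_{k=0}^{\infty} \sum_{\kappa \vdash k} \frac{[a]_\kappa [b]_\kappa}{[c]_\kappa}\frac{C_\kappa(T)}{k!},
\end{equation*}
expand both $\hyperg(\cdot;\cdot;I-T)$ terms on the right into double series in $C_{\kappa}(T)$ via the zonal binomial identity, and collect. The coefficient of each $C_\kappa(T)/k!$ reduces to an identity on generalized Pochhammer symbols $[a]_\kappa$ which, by the product structure $[a]_\kappa = \prod_{i=1}^m (a-(i-1)/2)_{\kappa_i}$ of the matrix Pochhammer symbol and the corresponding factorization of $\Gamma_m$, follows from the $m$-fold application of the classical scalar Gauss reflection formula.

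The main obstacle will be the bookkeeping in either approach. In the integral route, it is the rigorous splitting of the $S$-domain and the identification of Jacobians and normalizing constants; one must also handle the singular locus $\det(I-T) = 0$ and analytically continue in $a, b, c$ outside the region of absolute convergence. In the series route, the challenge is the zonal binomial expansion for $C_\kappa(I-T)$ and the recognition that the resulting combinatorial sum factorizes into $m$ independent scalar Gauss identities via the Pochhammer product formula. Since this lemma is purely auxiliary (it will be used in the proofs of \Cref{thm:thm2:ppr1,thm:thm3:ppr1} to evaluate marginal densities for the Haar measure on $\Grassmann_p(\bbR^d)$), invoking Muirhead's treatment directly and citing the analytic continuation machinery therein is the most economical path.
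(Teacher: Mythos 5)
Your proposal takes a genuinely different route from the paper, and in both of the routes you sketch the decisive step is exactly the part you leave unproven. The paper proves the lemma by a differential-equation characterization: by Muirhead's Theorem~7.5.5, $\hyperg(a,b;c;T)$ is the unique symmetric solution, analytic at the origin with value $1$, of a system of partial differential equations in the eigenvalues of $T$; the proof then checks by direct substitution that both $\hyperg(a,b;a+b-c+\frac{m+1}{2};I-T)$ and $\det(I-T)^{c-a-b}\hyperg(c-a,c-b;c-a-b+\frac{m+1}{2};I-T)$ satisfy that same system, so any linear combination does, and uniqueness together with the normalization $F(0)=1$ (as in Kummer's classical solution theory) fixes the $\Gamma_m$-coefficients. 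Nothing in your proposal reproduces this or an equivalent mechanism.

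Concretely: in the Euler-integral route, the substitution $S\mapsto I-S$ merely exchanges the exponents of $\det S$ and $\det(I-S)$ and can at best produce one-term Pfaff/Euler-type transformations; the two-term formula relating the arguments $T$ and $I-T$ is a connection problem between expansions at different singular loci, and "decompose it into two pieces" is precisely the content of the lemma --- you give no mechanism (contour splitting, a Barnes-type integral, or otherwise) for carrying this out in the matrix-argument setting. In the series route, the claim that the coefficient identity "factorizes into $m$ independent scalar Gauss identities" via $[a]_\kappa=\prod_{i=1}^m\bigl(a-\frac{i-1}{2}\bigr)_{\kappa_i}$ cannot be right as stated: a row-by-row application of the scalar reflection formula would yield the matrix parameter $a+b-c+1$, whereas the correct parameter in \cref{eq:hyperg-reflect} is $a+b-c+\frac{m+1}{2}$, and the two agree only when $m=1$; moreover, the generalized binomial coefficients in the expansion of $C_\kappa(I-T)$ in terms of $C_\sigma(T)$ do not factor over the rows of $\kappa$, so the double sum does not decouple. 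Finally, the fallback of "invoking Muirhead's treatment directly" is not available: as the paper notes, this connection formula does not appear in the literature, which is why the lemma is proved from scratch via the PDE system.
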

Our proof of Lemma~\ref{lm:2F1} is similar to that for the case $p=1$ by Kummer's solutions of the hypergeometric differential equation (see, e.g., \cite[Section~3.8]{luke1969special}), and we leave it to \cref{ssec:proof-of-lm}.

\begin{lemma}\label{lm:lm1:ppr1}
	Suppose $p<(d+1)/2$.
	If $V^{(0)}$ satisfies the condition that $\cR(V^{(0)})$ is uniformly sampled from $\Grassmann_p(\bbR^d)$,
	then for sufficiently large $d$ and $\delta\in[0,1]$,
	there exists a constant $C_p$, independent of $\delta$ and $d$, such that
	\begin{equation}\label{eq:lm1:ppr1}
	\txs\prob{V^{(0)}\in \sphere(C_p\delta^{-1} d^{1/2})}
	\ge 1- \delta^{p^2}.
	\end{equation}
\end{lemma}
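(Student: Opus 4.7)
The plan is to reduce the event $\{V^{(0)}\in\sphere(\kappa)\}$ to a tail estimate on the smallest eigenvalue of a matrix Beta random matrix, and then extract a bound of the right order by invoking the closed-form CDF together with the reflection identity of \Cref{lm:2F1}.

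Since $\cR(V^{(0)})$ is Haar-uniform on $\Grassmann_p(\bbR^d)$, the top $p\times p$ Gram matrix $W := \ol{V^{(0)}}^{\T}\ol{V^{(0)}}$ follows the matrix Beta distribution $\mathrm{Beta}_p(p/2,(d-p)/2)$, as can be verified by representing $V^{(0)}=Z(Z^{\T}Z)^{-1/2}$ with $Z\in\bbR^{d\times p}$ of iid standard normal entries and splitting the Wishart $Z^{\T}Z = \ol Z^{\T}\ol Z + \ul Z^{\T}\ul Z$. The event $V^{(0)}\in\sphere(\kappa)$ is equivalent to $\lambda_{\min}(W)\ge 1/\kappa^2$, since $\sigma_{\max}(\ol{V^{(0)}})\le 1$ is automatic from $(V^{(0)})^{\T}V^{(0)} = I_p$.

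Setting $u = 1/\kappa^2$ and using the complementary identity $I_p - W \sim \mathrm{Beta}_p((d-p)/2,p/2)$, Constantine's CDF formula for the largest eigenvalue of a matrix Beta gives
\[
\prob{\lambda_{\min}(W)\ge u} = K_d\,(1-u)^{p(d-p)/2}\hyperg\!\left(\tfrac{d-p}{2},\tfrac{1}{2};\tfrac{d+1}{2};(1-u)I_p\right),
\]
where $K_d := \Gamma_p(d/2)\Gamma_p((p+1)/2)/[\Gamma_p(p/2)\Gamma_p((d+1)/2)]$. I then apply \Cref{lm:2F1} with $a = (d-p)/2$, $b = 1/2$, $c = (d+1)/2$ (so $c-a-b = p/2$) to rewrite the $\hyperg$ factor as a sum of two pieces evaluated at $uI_p$. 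The first piece involves $\hyperg(a,b;b;uI_p)$, which collapses to $(1-u)^{-p(d-p)/2}$ via the identity $\hyperg(\alpha,\beta;\beta;T) = \det(I-T)^{-\alpha}$, and its contribution cancels exactly the $(1-u)^{p(d-p)/2}$ prefactor together with $K_d$ to yield the constant $1$. The second piece carries the factor $\det(uI_p)^{c-a-b} = u^{p^2/2}$, so after cancellation
\[
\prob{\lambda_{\min}(W) < u} \le \wtd C(p,d)\, u^{p^2/2}
\]
for sufficiently small $u$, where $\wtd C(p,d)$ is an explicit ratio of multivariate gamma functions times a residual $\hyperg$ factor that equals $1+O(ud)$ near the origin.

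A Stirling estimate then shows $\wtd C(p,d) \sim \wtd C(p)\cdot d^{p^2/2}$ as $d\to\infty$, so substituting $u = \delta^2/(C_p^2 d)$ produces a bound of the form $\wtd C(p)\,\delta^{p^2}/C_p^{p^2}$, and choosing $C_p$ large enough (depending only on $p$) to absorb $\wtd C(p)$ yields the claimed $\delta^{p^2}$ tail. The main technical obstacle is the treatment of the second reflection coefficient: the factor $\Gamma_p(a+b-c) = \Gamma_p(-p/2)$ appearing in \Cref{lm:2F1} is formally singular for $p\ge 2$, since it contains poles of the scalar gamma function at nonpositive integer arguments. The formula must therefore be interpreted through an analytic-continuation limit in which this singularity combines with compensating zeros from the accompanying hypergeometric factor to produce a finite effective coefficient, in analogy with the logarithmic corrections that arise in the scalar Gauss reflection formula when $c-a-b$ is a nonpositive integer. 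Carrying this cancellation out explicitly while maintaining uniform control over the $d$- and $p$-dependent constants is what makes the proof substantially more delicate than the $p=1$ case.
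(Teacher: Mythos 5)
Your route is the same as the paper's: you use the closed-form distribution of the smallest singular value of $\ol V^{(0)}$ (your matrix-Beta/Constantine formula is exactly the Absil--Edelman--Koev expression the paper quotes, and your $K_d$ coincides with its scalar prefactor), then the reflection identity of \Cref{lm:2F1} with $a=\frac{d-p}{2}$, $b=\frac{1}{2}$, $c=\frac{d+1}{2}$, the collapse of the first term via $\hyperg(a,b;b;T)=\det(I-T)^{-a}$, extraction of $\det(uI_p)^{p/2}=u^{p^2/2}$ from the second term, the asymptotics $\Gamma_p(\frac{d}{2})/\Gamma_p(\frac{d-p}{2})=(\frac{d}{2})^{p^2/2}[1+\oo(1)]$, and the substitution $u\asymp\delta^2/d$; this is precisely the paper's proof in outline.

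The genuine gap is your treatment of the singular connection coefficient, which you defer and for which you propose the wrong mechanism. The coefficient of the second reflection term is $\Gamma_p(-\frac{p}{2})\Gamma_p(\frac{d+1}{2})\big/\big[\Gamma_p(\frac{d-p}{2})\Gamma_p(\frac{1}{2})\big]$. You note that $\Gamma_p(-\frac{p}{2})$ has scalar-gamma poles for $p\ge 2$ and suggest these must be absorbed by ``compensating zeros from the accompanying hypergeometric factor,'' with logarithmic corrections as in the degenerate scalar Gauss connection formula. That is not what happens, and pursued as described the step would fail: the factor $\hyperg\big(\frac{p+1}{2},\frac{d}{2};\frac{2p+1}{2};uI_p\big)$ equals $1$ at $u=0$, is regular there, and supplies no compensating zeros, and no logarithms occur for these parameter values. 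The actual resolution is that the denominator factor $\Gamma_p(\frac{1}{2})=\pi^{p(p-1)/4}\prod_{i=1}^{p}\Gamma\big(\frac{2-i}{2}\big)$ is singular for $p\ge 2$ as well, with exactly as many poles as $\Gamma_p(-\frac{p}{2})$, so the coefficient must be read as the analytic continuation of the ratio, $\lim_{\epsilon\to 0}\Gamma_p(-\frac{p}{2}+\epsilon)/\Gamma_p(\frac{1}{2}+\epsilon)$, which is a finite nonzero constant depending only on $p$; the paper computes this limit termwise. Once that observation is in place, the remainder of your outline (confluence of the surviving $\hyperg$ factor to a ${}_{1}F_{1}$ bounded uniformly for $\delta\le 1$, the $d^{p^2/2}$ growth of the gamma ratio, and the final choice of $C_p$) goes through exactly as in the paper.
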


\begin{proof}
	Let $1\ge\sigma_1\ge\dots\ge\sigma_p\ge0$ be the singular values of $\ol V^{(0)}$, and then $\sigma_i=\cos\theta_i$,
	where $\theta_i$ are the canonical angles between $\cR(V^{(0)})$ and $\cR(V_*)$ (recall \cref{eq:YV-dfn}).
	By \cite[Theorem~1]{absilEK2006largest}, since $p<(d+1)/2$, the probability distribution function of $\sigma_p$ is
	\begin{align*}
	\txs\prob{V^{(0)}\in \sphere(1/x)}
	&= \txs\prob{\sigma_p\ge x}
	= \txs\prob{\theta_p\le \arccos x}
	\\ &= \tfrac{\Gamma(\tfrac{p+1}{2})\Gamma(\tfrac{d-p+1}{2})}{\Gamma(\tfrac{1}{2})\Gamma(\tfrac{d+1}{2})}(1-x^2)^{p(d-p)/2}\hyperg\left(\tfrac{d-p}{2},\tfrac{1}{2};\tfrac{d+1}{2};(1-x^2)I_p\right)
	.
	\end{align*}
	Set
	$
	f_d:=
	\tfrac{\Gamma_p(\tfrac{d+1}{2})\Gamma_p(\tfrac{p}{2})}{\Gamma_p(\tfrac{p+1}{2})\Gamma_p(\tfrac{d}{2})}
	%=\tfrac{\Gamma(\tfrac{1}{2})\Gamma(\tfrac{d+1}{2})}{\Gamma(\tfrac{p+1}{2})\Gamma(\tfrac{d-p+1}{2})}
	, %\quad
	g_d:=
	\tfrac{\Gamma_p(\tfrac{d+1}{2})\Gamma_p(-\tfrac{p}{2})}{\Gamma_p(\tfrac{d-p}{2})\Gamma_p(\tfrac{1}{2})}
	%= \txs\prod_{i=1}^{2\floor{p/2}+1} \txs\prod_{j=1}^{\floor{p/2}} \left( 1-\tfrac{d}{i+2j-2} \right)
	.
	$
%%%%%%%position:<<<< lm:lm1:ppr1
	After some calculations that are deferred to \cref{ssec:complementary-calculation-in-proof-of-lm}),
	we know:
	\begin{itemize}
		\item
	in defining $g_d$, although $\Gamma_p(-\tfrac{p}{2})$ and $\Gamma_p(\tfrac{1}{2})$ may be  $\infty$,
	by analytic continuation, $\Gamma_p(-\tfrac{p}{2})/\Gamma_p(\tfrac{1}{2})$ is well-defined;
	\item
	$
	f_d^{-1}g_d
	= \tfrac{\Gamma(\tfrac{p+1}{2})\Gamma_p(\tfrac{d}{2})\Gamma_p(-\tfrac{p}{2})}{\Gamma(\tfrac{1}{2})\Gamma_p(\tfrac{d-p}{2})\Gamma_p(\tfrac{1}{2})}
	;
	$
	\item
	$
	\tfrac{\Gamma_p(\tfrac{d}{2})}{\Gamma_p(\tfrac{d-p}{2})}
	=\left( \tfrac{d}{2} \right)^{p^2/2}[1+\oo(1)]
	\,\,\text{as $d\to\infty$}.
	$
	\end{itemize}
	%Now we return to calculate the probability.
%%%%%%%position:>>>> lm:lm1:ppr1
	%by \cite[35.7.7]{richards2010functions},
	%by \cite[Theorem~2.1.12]{muirhead1982aspects},
	By \cref{eq:hyperg-reflect}, we have
	\[
		\hyperg\left(\tfrac{d-p}{2},\tfrac{1}{2};\tfrac{d+1}{2};(1-x^2)I_p\right)
			=  f_d\,\hyperg\left(\tfrac{d-p}{2},\tfrac{1}{2};\tfrac{1}{2};x^2I_p\right)
			+g_d\det(x^2I_p)^{p/2}\hyperg(\tfrac{p+1}{2},\tfrac{d}{2};\tfrac{2p+1}{2};x^2I_p).
	\]
%	\begin{multline*}
%	\hyperg\left(\tfrac{d-p}{2},\tfrac{1}{2};\tfrac{d+1}{2};(1-x^2)I_p\right) \\
%	=  f_d\,\hyperg\left(\tfrac{d-p}{2},\tfrac{1}{2};\tfrac{1}{2};x^2I_p\right)
%	+g_d\det(x^2I_p)^{p/2}\hyperg(\tfrac{p+1}{2},\tfrac{d}{2};\tfrac{2p+1}{2};x^2I_p).
%	\end{multline*}
	Also, \cite[Definition~7.3.1 and Corollary~7.3.5]{muirhead1982aspects} give us
	\[
	\hyperg\left(\tfrac{d-p}{2},\tfrac{1}{2};\tfrac{1}{2};x^2I_p\right)
	= \hyperg[1][0]\left(\tfrac{d-p}{2};x^2I_p\right)
	= \det(I_p-x^2I_p)^{-(d-p)/2}
	= (1-x^2)^{-p(d-p)/2}
	.
	\]
	Therefore,
	\[
	\txs\prob{V^{(0)}\in \sphere(1/x)}
	= 1+f_d^{-1}g_d\,(1-x^2)^{p(d-p)/2}x^{p^2}\hyperg\left(\tfrac{p+1}{2},\tfrac{d}{2};\tfrac{2p+1}{2};x^2I_p\right)
	.
	\]
	Substituting $x=(\delta^{-1} d^{1/2})^{-1}$  and
	by \cite[(8) of Section~7.4]{muirhead1982aspects},
	we get as $d\to\infty$
	\begin{align}
 \label{eq:Cp-dfn}
 \MoveEqLeft[1]\txs\prob{V^{(0)}\notin \sphere(\delta^{-1} d^{1/2})}
	\nonumber
	 \\&= -f_d^{-1}g_d(1-\delta^2 d^{-1})^{p(d-p)/2}(\delta^2 d^{-1})^{p^2/2}\hyperg\left(\tfrac{p+1}{2},\tfrac{d}{2};\tfrac{2p+1}{2};\tfrac{\delta^2}{d}I_p\right)
	 \\ &= \tfrac{\Gamma(\tfrac{p+1}{2})\Gamma_p(-\tfrac{p}{2})}{-\Gamma(\tfrac{1}{2})\Gamma_p(\tfrac{1}{2})}\tfrac{\Gamma_p(\tfrac{d}{2})}{\Gamma_p(\tfrac{d-p}{2})}\left(1-\tfrac{\delta^2}{d}\right)^{pd/2}\left(\tfrac{d}{\delta^2}-1\right)^{-p^2/2}\left[\hyperg[1][1]\left(\tfrac{p+1}{2};\tfrac{2p+1}{2};\tfrac{\delta^2}{2}I_p\right)+\oo(1)\right]
	\nonumber \\ &= \tfrac{\Gamma(\tfrac{p+1}{2})\Gamma_p(-\tfrac{p}{2})}{-\Gamma(\tfrac{1}{2})\Gamma_p(\tfrac{1}{2})}\left( \tfrac{d}{2} \right)^{p^2/2}[1+\oo(1)]\left[\exp\left(-\tfrac{p\delta^2}{2}\right)+\oo(1)\right]\left[\tfrac{\delta^{p^2}}{d^{p^2/2}}+\oo(1)\right]
	%\nonumber \\ &\qquad\qquad\times
	\left[\hyperg[1][1]\left(\tfrac{p+1}{2};\tfrac{2p+1}{2};\tfrac{\delta^2}{2}I_p\right)+\oo(1)\right]
	%\qquad d\to\infty
	\nonumber \\ &= \tfrac{\Gamma(\tfrac{p+1}{2})\Gamma_p(-\tfrac{p}{2})}{-\Gamma(\tfrac{1}{2})\Gamma_p(\tfrac{1}{2})}\exp\left(-\tfrac{p\delta^2}{2}\right)\hyperg[1][1]\left(\tfrac{p+1}{2};\tfrac{2p+1}{2};\tfrac{\delta^2}{2}I_p\right)\delta^{p^2}[1+\oo(1)]
	\nonumber \\ &\le \tfrac{\Gamma(\tfrac{p+1}{2})\Gamma_p(-\tfrac{p}{2})}{-\Gamma(\tfrac{1}{2})\Gamma_p(\tfrac{1}{2})}\hyperg[1][1]\left(\tfrac{p+1}{2};\tfrac{2p+1}{2};\tfrac{1}{2}I_p\right)\delta^{p^2}2
	%\nonumber \\ &
	=: C_p^{p^2}\delta^{p^2}
	,
	\nonumber
	\end{align}
	where the  inequality is guaranteed by
	$\hyperg[1][1]\left( \tfrac{p+1}{2};\tfrac{2p+1}{2};\tfrac{\delta^2}{2}I_p \right)
	\le\hyperg[1][1]\left( \tfrac{p+1}{2};\tfrac{2p+1}{2};\tfrac{1}{2}I_p \right)$, according to \cite[Theorem~7.5.6]{muirhead1982aspects}.
	%	  \marginpartiny{You need $C_p\le 1$ to get \cref{eq:lm1:ppr1}.\answer $\delta/C_p<1$? I didn't find why $C_p<1$, although it is correct.}
	Substituting $\delta/C_p$ for $\delta$, we infer from \cref{eq:Cp-dfn} that
	$\txs\prob{V^{(0)}\notin \sphere(C_p\delta^{-1} d^{1/2})}\le\delta^{p^2}$.
	The claim \cref{eq:lm1:ppr1} is now a simple consequence.
\end{proof}

Now we are ready to prove \Cref{thm:thm2:ppr1}.

\begin{proof}[Proof of \emph{\Cref{thm:thm2:ppr1}}]
	Define the event
	$
	\bbH'_* = \txs\set*{ V^{(0)}\in \sphere(C_p\delta^{-1} d^{1/2})}.
	$
	Since  $\cR(V^{(0)})$ is uniformly sampled from $\Grassmann_p(\bbR^d)$,
	Lemma~\ref{lm:lm1:ppr1} says $\txs\prob{\bbH'_*}\ge1-\delta^{p^2}$.
	In the following, we will apply \Cref{thm:thm1:ppr1} with $\phi=C_p\delta^{-1},\,\omega=(\sqrt{2}+1)\lambda_1\delta^2$.
	Since \Cref{thm:thm1:ppr1} is valid on $\bbH'_*$,
	and
	\[
	K[(2+\ee)d+ p+1]\exp(-C_{\nu\psi}\beta^{-\varepsilon})\le\delta^{p^2},
	\]
	there exists an event $\bbH$ with
	\[
	\txs\prob{\bbH\given\bbH'_*}
	\ge 1 -
	K[(2+\ee)d+ p+1]\exp(-C_{\nu\psi}\beta^{-\varepsilon})
	\ge 1 -\delta^{p^2},
	\]
	such that for any $n\in [N_{3/2-37\varepsilon/4}(\beta),K]$,
	\begin{align*}
	\txs\E{\N{T^{(n)}}_{\F}^2\overevent \bbH\cap\bbH'_*}
	  &=\txs\prob{\bbH'_*}\txs\E{\N{T^{(n)}}_{\F}^2\ind{\bbH}\given \bbH'_*}
	\\ &\le \txs\E{\N{T^{(n)}}_{\F}^2\ind{\bbH}\given \bbH'_*}
	\\ &\le(1-\beta\gamma)^{2(n-1)}pC_p^2\delta^{-2}d
	+\tfrac{32\psi^4\beta}{2-\lambda_1 \beta}\varphi(p,d;\Lambda)
	%\\ &\qquad
	+C_{\circ}\kappa ^4\nu^2\eta_p ^2\gamma^{-1}p\sqrt{d-p}\beta^{3/2-5\varepsilon}
	.
	\end{align*}
	%\marginpartiny{Correct line 1008 and, similarly, many others, as we talked.}
	Let $\bbH_*= \bbH\cap\bbH'_*$ for which
	$\txs\prob{\bbH_*}= \txs\prob{\bbH\given\bbH'_*} \txs\prob{\bbH'_*}\ge (1-\delta^{p^2})^2 \ge 1-2\delta^{p^2}$, as expected.
\end{proof}

Finally, we prove \Cref{thm:thm3:ppr1}.
\begin{proof}[Proof of \emph{\Cref{thm:thm3:ppr1}}]
	First we examine the conditions of \Cref{thm:thm2:ppr1} to make sue that they are satisfied.
	It can be seen $\beta_*\to 0$ as $N_*\to\infty$.
	Thus, $\beta_*$ satisfies \cref{eq:beta-bound} for sufficiently large $N_*$.
	We have
	\begin{multline*}
	(1-\beta_*\gamma)^{N_*}
	=\left(1-\tfrac{3\ln N_*}{2N_*}\right)^{N_*}
	=\exp(-\tfrac{3}{2}\ln N_*)[1+\oo(1)]
	=N_*^{-3/2}[1+\oo(1)]
	\\=\left(	\tfrac{3\ln N_*}{2\gamma\beta_*} \right)^{-3/2}[1+\oo(1)]
	=\tfrac{\beta_*^{3/2}\gamma^{3/2}}{(3/2)^{3/2}(\ln N_*)^{3/2}}[1+\oo(1)]
	\le\beta_*^{3/2}
	,
	\end{multline*}
	which implies $N_*\ge N_{3/2}(\beta)\ge N_{3/2-9\varepsilon}(\beta)$.
	
	The conclusion of the theorem will be a straightforward consequence if
	\[
	\wtd C(d,N_*,\delta)
	:=\tfrac{(1-\beta_*\gamma)^{2(N_*-1)}pC_p^2\delta^{-2}d
		+\tfrac{32\psi^4\beta_*}{2-\lambda_1 \beta_*}\varphi(p,d;\Lambda)
		+C_{\circ}\kappa ^4\nu^2\eta_p ^2\gamma^{-1}p\sqrt{d-p}\beta_*^{3/2-7\varepsilon}}
	{\tfrac{\varphi(p,d;\Lambda)}{\lambda_p-\lambda_{p+1}}\tfrac{\ln N_*}{N_*}}
	\]
	is bounded, say by $C_*(d,N_*,\delta)$ to be defined.
	In fact,
	\begin{align*}
	\wtd C(d,N_*,\delta)
	&=\gamma\tfrac{N_*}{\ln N_*}\left[(1-\beta_*\gamma)^{2(N_*-1)}C_p^2\delta^{-2}\tfrac{pd}{\varphi(p,d;\Lambda)}
	+\tfrac{32\psi^4\beta_*}{2-\lambda_1 \beta_*}
	+\tfrac{C_{\circ}\kappa ^4\nu^2\eta_p ^2\gamma^{-1}p\sqrt{d-p}}{\varphi(p,d;\Lambda)}\beta_*^{3/2-7\varepsilon}\right]
	\\ &\le
	\begin{multlined}[t]
	\gamma\tfrac{N_*}{\ln N_*}\left[\tfrac{\beta_*^{3}}{(1-\beta_*\gamma)^2}C_p^2\delta^{-2}\tfrac{pd}{\varphi(p,d;\Lambda)}
	+\tfrac{32\psi^4\beta_*}{2-\lambda_1 \beta_*}
	+\tfrac{C_{\circ}\kappa ^4\nu^2\eta_p ^2\gamma^{-1}p\sqrt{d-p}}{\varphi(p,d;\Lambda)}\beta_*^{3/2-7\varepsilon}\right]
	\\ \left(\text{by $N_*\ge N_{3/2}$, or equivalently, $(1-\beta_*\gamma)^{N_*}\le\beta_*^{3/2}$}\right)
	\end{multlined}
	\\ &\le
	\begin{multlined}[t]
	\gamma\tfrac{N_*}{\ln N_*}\beta_*\left[\tfrac{\beta_*^{2}}{(1-\beta_*\gamma)^2}C_p^2\delta^{-2}\tfrac{d}{p}\tfrac{1}{\tfrac{\lambda_1\lambda_d}{\lambda_1-\lambda_d}}
	+\tfrac{32\psi^4}{2-\lambda_1 \beta_*}
	+\tfrac{C_{\circ}\kappa ^4\nu^2\eta_p ^2\gamma^{-1}}{\sqrt{p}\tfrac{\lambda_1\lambda_d}{\lambda_1-\lambda_d}}\beta_*^{1/2-7\varepsilon}\right]
	\\ \left(\text{by $\varphi(p,d;\Lambda)\ge\tfrac{p(d-p)\lambda_1\lambda_d}{\lambda_1-\lambda_d}$ and $d\ge2p$}\right)
	\end{multlined}
	\\ &\le
	\begin{multlined}[t]
	\tfrac{3}{2}\left[\tfrac{\beta_*^{1+3\varepsilon}}{(1-\beta_*\gamma)^2}\tfrac{C_p^2}{p}\tfrac{\lambda_1-\lambda_d}{\lambda_1\lambda_d}
	+\tfrac{32\psi^4}{2-\lambda_1 \beta_*}
	+C_{\circ}\kappa ^4\nu^2\eta_p ^2\gamma^{-1}p^{-1/2}\tfrac{\lambda_1-\lambda_d}{\lambda_1\lambda_d}\beta_*^{1/2-7\varepsilon}\right]
	\\ \left(\text{by $d\beta_*^{1-3\varepsilon}\le\delta^2$}\right)
	\end{multlined}
	\\ &=:C_*(d,N_*,\delta)
	.
	\end{align*}
	Since $\beta_*\le1$ and $\beta_*\gamma\le \lambda_1\beta_*\le\sqrt{2}-1$, we have
	\[
	C_*(d,N_*,\delta)
	\le \tfrac{3}{2}\left[\tfrac{C_p^2}{2(3-2\sqrt{2})p}\tfrac{\lambda_1-\lambda_d}{\lambda_1\lambda_d} +\tfrac{32\psi^4}{3-\sqrt{2}}+\tfrac{C_{\circ}\kappa ^4\nu^2\eta_p^2(\lambda_1-\lambda_d)}{p^{1/2}\gamma\lambda_1\lambda_d}\right]
	,
	\]
	and also
	$C_*(d,N_*,\delta)\to 24\psi^4$ as $d\to\infty, N_*\to\infty$,
	as was to be shown.
\end{proof}

\section{Conclusion} \label{sec:conclusion}
We have presented a detailed convergence analysis for the multidimensional subspace online PCA iteration on
sub-Gaussian samples, following the recent work \cite{liWLZ2017near} by \LWLZ\ who considered
only the one-dimensional case, i.e.,
the most significant principal component. Our results bear similar forms to theirs and
when applied to the one-dimensional case  yield estimates of essentially the same quality, as expected.
As we embarked on the analysis presented in this paper, we found that a straightforward extension of
the analysis in \cite{liWLZ2017near} was not possible because of the involvement of
a $\cot$-matrix of dimension higher than $1$ in the multidimensional case but just a scalar in
the one-dimensional case.
Our results yields an explicit convergence rate, and it
is \emph{nearly optimal} because it nearly attains the minimax information lower bound for sub-Gaussian PCA
under a constraint, as well as
\emph{nearly global} because the finite sample error bound holds with high probability
if the initial value is uniformly sampled from the Grassmann manifold.

\subsection*{Acknowledgements}
{This work was supported in part by National Natural Science Foundation of China (Grant No. 11901340), National Science Foundation of USA (Grants Nos. DMS-1719620 and DMS-2009689), Ministry of Science and Technology of Taiwan, Nationcal Center for Theoretical Sciences, and the ST Yau Centre at the National Chiao Tung University.
The authors are indebted to the editor and anonymous referees for their constructive comments and suggestions that  improved the presentation.}
\begin{appendix}

\section{Supplementary Proofs}\label{sec:supplementary-proofs}
\subsection{Proof of Item \ref{itm:lm:diff-T:estimate} of Lemma \ref{lm:diff-T}}\label{ssec:proof-of-item-itm-of-lemma-lm}
	%Now we turn to \cref{itm:lm:diff-T:estimate}.
	We have
	\begin{equation}\label{eq:itm:lm:diff-T:estimate:pf-1}
	\varc{\Delta T}=
	\varc{T_L(\beta YY^{\T}+R_T)T_R}
	=\beta^2 \varc{T_LYY^{\T}T_R}
	+2\beta R_{\circ,1}+ R_{\circ,2},
	\end{equation}
	where $R_{\circ,1}=\covc{T_LYY^{\T}T_R,T_LR_TT_R}$, and $R_{\circ,2}=\varc{T_LR_TT_R}$.
	%For the main part, note that $T_LYY^{\T}V\ol V^{-1}=\ul Y\ol Y^{\T}+T\ol Y\ul Y^{\T}T+T\ol Y\ol Y^{\T}+\ul Y\ul Y^{\T}T$.
	By \cref{eq:dT:mainpart},
	\begin{equation}\label{eq:itm:lm:diff-T:estimate:pf-2}
	\varc{T_LYY^{\T}T_R}=\varc{\ul Y\ol Y^{\T}}+R_{\circ,0},
	\end{equation}
	where
	\begin{align*}
	R_{\circ,0}
	&=\varc{T\ol Y\ul Y^{\T}T}+\varc{T\ol Y\ol Y^{\T}}+\varc{\ul Y\ul Y^{\T}T} 	\\
	&\quad -2\covc{\ul Y\ol Y^{\T},T\ol Y\ul Y^{\T}T}-2\covc{\ul Y\ol Y^{\T},T\ol Y\ol Y^{\T}}+2\covc{\ul Y\ol Y^{\T},\ul Y\ul Y^{\T}T}
	\\ &\quad +2\covc{T\ol Y\ul Y^{\T}T,T\ol Y\ol Y^{\T}}-2\covc{T\ol Y\ul Y^{\T}T,\ul Y\ul Y^{\T}T} \\
    &\quad-2\covc{T\ol Y\ol Y^{\T},\ul Y\ul Y^{\T}T}.
	\end{align*}
	Examine \cref{eq:itm:lm:diff-T:estimate:pf-1} and \cref{eq:itm:lm:diff-T:estimate:pf-2} together to get
	$H_{\circ}=\varc{\ul Y\ol Y^{\T}}$ and  $R_{\circ}=\beta^2R_{\circ,0}+2\beta R_{\circ,1}+R_{\circ,2}$.
	We note
	\begin{gather*}
	Y_j=e_j^{\T}Y
	=e_j^{\T}\Lambda^{1/2}\Lambda^{-1/2}Y
	=\lambda_j^{1/2}e_j^{\T}\Lambda^{-1/2}Y, \\
	e_i^{\T}\varc{\ul Y\ol Y^{\T}}e_j
	=\var(e_i^{\T}\ul Y\ol Y^{\T}e_j)=\var(Y_{p+i}Y_{j})
	=\txs\E{Y_{p+i}^2Y_{j}^2}.
	\end{gather*}
	By \cite[(5.11)]{vershynin2012introduction},
	\[
	\txs\E{Y_j^4}
	= \lambda_j^2\txs\E{(e_j^{\T}\Lambda^{-1/2}Y)^4}
	\le 16\lambda_j^2\N{e_j^{\T}\Lambda^{-1/2}Y}_{\psi_2}^4
	\le 16\lambda_j^2\N{\Lambda^{-1/2}Y}_{\psi_2}^4
	= 16\lambda_j^2\psi^4.
	\]
	Therefore
	$
	e_i^{\T}\varc{\ul Y\ol Y^{\T}}e_j
	\le
	[\txs\E{Y_{p+i}^4} \txs\E{Y_j^4}]^{1/2}
	\le 16\lambda_{p+i}\lambda_j\psi^4,
	$
	i.e.,  $H_{\circ}=\varc{\ul Y\ol Y^{\T}}\le 16\psi^4 H$.
	This proves \cref{itm:lm:diff-T:varcdT}.
	To show \cref{itm:lm:diff-T:normR_H}, first we bound the entrywise variance and covariance.
	For any matrices $A_1,A_2$ of the same size, it holds that \cite[p.233]{hojo:1991}
	%(which was generalized to all unitarily invariant norm in \cite[Theorem~3.1]{hornM1990analog}),
	\begin{equation}\label{eq:schur-ineq-UI}
	\N{A_1\circ A_2}_2\le\N{A_1}_2\N{A_2}_2,
	\end{equation}
	and thus
	\begin{subequations}\label{eq:covc&varc}
		\begin{align}
		\N{\covc{A_1,A_2}}_2
		&=\N{\txs\E{A_1\circ A_2}-\txs\E{A_1}\circ\txs\E{A_2}}_2 \nonumber\\
		%&\le\txs\E{\N{A_1\circ A_2}_2}+\N{\txs\E{A_1}\circ\txs\E{A_2}}_2 \nonumber\\
		&\le \txs\E{\N{A_1}_2\N{A_2}_2}+ \N{\txs\E{A_1}}_2\N{\txs\E{A_2}}_2, \label{eq:covc} \\
		\N{\varc{A_1}}_2
		&\le \txs\E{\N{A_1}_2^2}+ \N{\txs\E{A_1}}_2^2. \label{eq:varc}
		\end{align}
	\end{subequations}
	Apply \cref{eq:covc&varc} to $R_{\circ,1}$ and $R_{\circ,2}$ to get
	\begin{equation}\label{eq:diff-T:pf-10}
	\N{R_{\circ,1}}_2\le2C_T\nu\wtd\eta_p^3 \beta^2(1+\N{T}_2^2)^{5/2},	\quad
	\N{R_{\circ,2}}_2\le2C_T^2\nu(\wtd\eta_p \beta)^4(1+\N{T}_2^2)^3,
	\end{equation}
	upon using
	\[
		\N{T_LYY^{\T}T_R}_2=\N{T_LYY^{\T}VV^{\T}T_R}_2\le \nu^{1/2}\wtd\eta_p(1+\N{T}_2^2), \quad
			\N{T_LR_TT_R}_2\le C_T\nu^{1/2}(\wtd\eta_p \beta)^2(1+\N{T}_2^2)^{3/2}.
	\]
%	\begin{align*}
%	\N{T_LYY^{\T}T_R}_2=\N{T_LYY^{\T}VV^{\T}T_R}_2&\le \nu^{1/2}\wtd\eta_p(1+\N{T}_2^2), \\
%	\N{T_LR_TT_R}_2&\le C_T\nu^{1/2}(\wtd\eta_p \beta)^2(1+\N{T}_2^2)^{3/2}.
%	\end{align*}
	For $R_{\circ,0}$, 	by \cref{eq:dT:mainpart:E}, we have
	\begin{align*}
	\N{\covc{\ul Y\ol Y^{\T},T\ol Y\ul Y^{\T}T}}_2&\le\txs\E{\N{\ul Y\ol Y^{\T}}_2^2}\N{T}_2^2,\\
	\N{\covc{\ul Y\ol Y^{\T},T\ol Y\ol Y^{\T}}}_2&\le\txs\E{\N{\ul Y\ol Y^{\T}}_2\N{\ol Y\ol Y^{\T}}_2}\N{T}_2,\\
	\N{\covc{\ul Y\ol Y^{\T},\ul Y\ul Y^{\T}T}}_2&\le\txs\E{\N{\ul Y\ol Y^{\T}}_2\N{\ul Y\ul Y^{\T}}_2}\N{T}_2,\\
	\N{\covc{T\ol Y\ul Y^{\T}T,T\ol Y\ol Y^{\T}}}_2&\le\txs\E{\N{\ul Y\ol Y^{\T}}_2\N{\ol Y\ol Y^{\T}}_2}\N{T}_2^3,\\
	\N{\covc{T\ol Y\ul Y^{\T}T,\ul Y\ul Y^{\T}T}}_2&\le\txs\E{\N{\ul Y\ol Y^{\T}}_2\N{\ul Y\ul Y^{\T}}_2}\N{T}_2^3,\\
	\N{\varc{T\ol Y\ul Y^{\T}T}}_2&\le\txs\E{\N{\ul Y\ol Y^{\T}}_2^2}\N{T}_2^4,\\
	\N{\varc{T\ol Y\ol Y^{\T}}}_2&\le\txs\E{\N{\ol Y\ol Y^{\T}}_2^2}\N{T}_2^2+\N{T\ol\Lambda }_2^2,\\
	\N{\varc{\ul Y\ul Y^{\T}T}}_2&\le\txs\E{\N{\ul Y\ul Y^{\T}}_2^2}\N{T}_2^2+\N{\ul\Lambda T}_2^2,\\
	\N{\covc{T\ol Y\ol Y^{\T},\ul Y\ul Y^{\T}T}}_2&\le\txs\E{\N{\ol Y\ol Y^{\T}}_2\N{\ul Y\ul Y^{\T}}_2}\N{T}_2^2+\N{T\ol\Lambda }_2\N{\ul\Lambda T}_2.
	\end{align*}
	Since
	\begin{gather*}
	\N{\ol Y\ol Y^{\T}}_2+\N{\ul Y\ul Y^{\T}}_2=\ol Y^{\T}\ol Y+\ul Y^{\T}\ul Y=Y^{\T}Y\le \nu \wtd\eta_p, \\
	\N{\ul Y\ol Y^{\T}}_2=(\ol Y^{\T}\ol Y)^{1/2}(\ul Y^{\T}\ul Y)^{1/2}
	\le \tfrac{\ol Y^{\T}\ol Y+\ul Y^{\T}\ul Y}{2}\le \tfrac{\nu \wtd\eta_p }{2},
	\end{gather*}
	we have
	\begin{align}
	\N{R_{\circ,0}}_2
	&\le\txs\E{2\N{\ul Y\ol Y^{\T}}_2^2+(\N{\ol Y\ol Y^{\T}}_2+\N{\ul Y\ul Y^{\T}}_2)^2}\N{T}_2^2 + (\N{T\ol\Lambda }_2+\N{\ul\Lambda T}_2)^2\nonumber\\
	&\quad+2\txs\E{\N{\ul Y\ol Y^{\T}}_2(\N{\ol Y\ol Y^{\T}}_2+\N{\ul Y\ul Y^{\T}}_2)}\left( \N{T}_2 + \N{T}_2^3 \right)+\txs\E{\N{\ul Y\ol Y^{\T}}_2^2}\N{T}_2^4\nonumber\\
	& \le (\nu \wtd\eta_p)^2\N{T}_2+\big[\tfrac{3}{2}(\nu \wtd\eta_p)^2+(\lambda_1+\lambda_{p+1})^2\big]\N{T}_2^2+(\nu \wtd\eta_p)^2\N{T}_2^3+\tfrac{1}{4}(\nu \wtd\eta_p)^2\N{T}_2^4\nonumber	\\
	& \le (\nu \wtd\eta_p)^2\N{T}_2\big(1+\tfrac{11}{2}\N{T}_2+\N{T}_2^2+\tfrac{1}{4}\N{T}_2^3\big). \label{eq:diff-T:pf-11}
	\end{align}
	Finally collecting \cref{eq:diff-T:pf-10} and \cref{eq:diff-T:pf-11} yields the desired bound on
	$R_{\circ}=\beta^2R_{\circ,0}+2\beta R_{\circ,1}+R_{\circ,2}$.

\subsection{Estimation in Proof of Lemma \ref{lm:prop4:ppr1}}\label{ssec:estimation-in-proof-of-lemma-lm}
	\begin{enumerate}
		\item
		$ \txs\E{\wtd J_1\circ \wtd J_1}=\opL^{2n}T^{(0)}\circ T^{(0)}$.
		\item
		$ \txs\E{\wtd J_1\circ \wtd J_{21}}= \txs\sum\limits_{s=1}^{N_{1/2-3\varepsilon}-1}\opL^{2n-s} T^{(0)}\circ\txs\E{D^{(s)}\ind{\bbQ_K}}=0$,
		because
		\[
			\txs\E{D^{(s)}\ind{\bbQ_K}}= \txs\E{\txs\E{D^{(s)}\ind{\bbQ_K}\given\fil_{s-1}}}=0.
		\]
		\item
		$\txs\E{\wtd J_1\circ \wtd J_{22}}=\txs\sum\limits_{s=N_{1/2-3\varepsilon}}^{n}\opL^{2n-s} T^{(0)}\circ\txs\E{D^{(s)}\ind{\bbT_{s-1}}\ind{\bbQ_K}}=0$, because
		$\bbT_{s-1}\subset\fil_{s-1}$ and so
		\begin{align*}
		\txs\E{D^{(s)}\ind{\bbT_{s-1}}\ind{\bbQ_K}}
		&=\txs\prob{\bbT_{s-1}}\txs\E{D^{(s)}\ind{\bbQ_K}\given\bbT_{s-1}}
		\\&=\txs\prob{\bbT_{s-1}}\txs\E{\txs\E{D^{(s)}\ind{\bbQ_K}\given\fil_{s-1}}\given\bbT_{s-1}}
		=0.
		\end{align*}
		\item
		$\txs\E{\wtd J_1\circ \wtd J_3}=\txs\sum\limits_{s=1}^{n} \opL^{2n-s}T^{(0)}\circ\txs\E{E_T^{(s-1)}\ind{\bbQ_K}}$.
		Recall \cref{eq:schur-ineq-UI}. By \cref{itm:lm:diff-T:normE_T} of Lemma~\ref{lm:diff-T}, we have
		\begin{align*}
		\N{\txs\E{\wtd J_1\circ \wtd J_3}}_2
		&\le \txs\sum\limits_{s=1}^n\N{\opL}_2^{2n-s}\N{T^{(0)}}_2\N{\txs\E{E_T^{(s-1)}\ind{\bbQ_K}}}_2
		\\ &\le \txs\sum\limits_{s=1}^n(1-\beta\gamma)^{2n-s}(\tfrac{\kappa ^2}{4}-1)^{1/2}C_T\nu^{1/2}(\wtd\eta_p\beta)^2\kappa ^3
		\\ &\le (1-\beta\gamma)^n\tfrac{(\kappa ^2-1)^{1/2}C_T\nu^{1/2}\wtd\eta_p ^2\beta^2\kappa ^3}{2\beta\gamma}
		\\ &\le \tfrac{1}{2}\beta^{1-6\varepsilon}C_T\nu^{1/2}\wtd\eta_p ^2\gamma^{-1}\beta \kappa ^4
                       \qquad\text{(by $n\ge N_{1-6\varepsilon}$).}
		\end{align*}
		\item
		$\txs\E{\wtd J_{21}\circ \wtd J_{22}}=\txs\sum\limits_{s=1}^{N_{1/2-3\varepsilon}-1}\txs\sum\limits_{s'=N_{1/2-3\varepsilon}}^{n}\opL^{2n-s-s'} \txs\E{D^{(s)}\ind{\bbQ_K}\circ D^{(s')}\ind{\bbT_{s'-1}}\ind{\bbQ_K}} =0$,
		because $s<s'$ and
		\begin{align*}
		 \txs\E{D^{(s)}\ind{\bbQ_K}\circ D^{(s')}\ind{\bbT_{s'-1}}\ind{\bbQ_K}}
		 &= \txs\E{D^{(s)}\circ D^{(s')}\ind{\bbT_{s'-1}}\ind{\bbQ_K}}
		\\ &=  \txs\prob{\bbT_{s'-1}}\txs\E{D^{(s)}\circ D^{(s')}\ind{\bbQ_K}\given\bbT_{s'-1}}
		\\ &=\txs\prob{\bbT_{s'-1}}\txs\E{\txs\E{D^{(s)}\circ D^{(s')}\ind{\bbQ_K}\given\fil_{s'-1}}\given\bbT_{s'-1}}
		\\ &= \txs\prob{\bbT_{s'-1}}\txs\E{\txs\E{D^{(s')}\ind{\bbQ_K}\given\fil_{s'-1}}\circ D^{(s)}\given\bbT_{s'-1}} \\
		&=0.
		\end{align*}
		\item %We have %For $\txs\E{\wtd J_{21}\circ \wtd J_{21}}$, we have
$
\begin{aligned}[t]
		\txs\E{\wtd J_{21}\circ \wtd J_{21}}&= \txs\sum\limits_{s=1}^{N_{1/2-3\varepsilon}-1}\txs\sum\limits_{s'=1}^{N_{1/2-3\varepsilon}-1}\opL^{2n-s-s'} \txs\E{D^{(s)}\ind{\bbQ_K}\circ D^{(s')}\ind{\bbQ_K}} \\
    &= \txs\sum\limits_{s=1}^{N_{1/2-3\varepsilon}-1}\opL^{2(n-s)} \txs\E{D^{(s)}\circ D^{(s)}\ind{\bbQ_K}},
\end{aligned}
$\\
%		%\begin{align*}
%			$$
%		\txs\E{\wtd J_{21}\circ \wtd J_{21}}= \txs\sum\limits_{s=1}^{N_{1/2-3\varepsilon}-1}\txs\sum\limits_{s'=1}^{N_{1/2-3\varepsilon}-1}\opL^{2n-s-s'} \txs\E{D^{(s)}\ind{\bbQ_K}\circ D^{(s')}\ind{\bbQ_K}}
%		%\\ &
%		 = \txs\sum\limits_{s=1}^{N_{1/2-3\varepsilon}-1}\opL^{2(n-s)} \txs\E{D^{(s)}\circ D^{(s)}\ind{\bbQ_K}},
%		$$
		because for $s\ne s'$,
		\begin{align*}
		\txs\E{D^{(s)}\ind{\bbQ_K}\circ D^{(s')}\ind{\bbQ_K}}
		  &=\txs\E{D^{(s)}\circ D^{(s')}\ind{\bbQ_K}}
		\\ &=\txs\E{\txs\E{D^{(\max\txs\set{s,s'})}\ind{\bbQ_K}\given\fil_{\max\txs\set{s,s'}-1}}\circ D^{(\min\txs\set{s,s'})}} \\
		&=0.
		\end{align*}
		Use \cref{itm:lm:diff-T:varcdT,itm:lm:diff-T:normR_H} of Lemma~\ref{lm:diff-T} to get
		\begin{align*}
		\txs\E{D^{(s)}\circ D^{(s)}\ind{\bbQ_K}}
		&=\txs\E{\txs\E{D^{(s)}\circ D^{(s)}\ind{\bbQ_K}\given\fil_{s-1}}} \\
		&=\txs\E{\varc{[T^{(n+1)}-T^{(n)}]\ind{\bbQ_K}\given \fil_{s-1}}} \\
		&=\txs\E{\beta^2H_{\circ}+R_{\circ}}
		%\\ &
		=\beta^2H_{\circ}+\txs\E{R_{\circ}}.
		\end{align*}
		Therefore
		$\txs\E{\wtd J_{21}\circ \wtd J_{21}}=\beta^2\txs\sum\limits_{s=1}^{N_{1/2-3\varepsilon}-1}\opL^{2(n-s)}H_{\circ} + \txs\sum\limits_{s=1}^{N_{1/2-3\varepsilon}-1}\opL^{2(n-s)}\txs\E{R_{\circ}}$.
		We have for $\kappa>2\sqrt{2}$
		\begin{align*}
		\N{R_{\circ}}_2
		&\le (\nu \wtd\eta_p \beta)^2\tau_{s-1}(1+\tfrac{11}{2}\tau_{s-1}+\tau_{s-1}^2+\tfrac{1}{4}\tau_{s-1}^3) 
        +4C_T\kappa ^5\nu(\wtd\eta_p\beta)^3+2C_T^2\kappa ^6\nu(\wtd\eta_p\beta)^4
		\\ &\le (\nu \wtd\eta_p \beta)^2\tau_{s-1}(\kappa ^2+\tfrac{21}{4}\kappa +\tfrac{1}{4}\kappa ^3)+4C_T\kappa ^5\nu(\wtd\eta_p\beta)^3+2C_T^2\kappa ^6\nu(\wtd\eta_p\beta)^4
		\\ &\le \tfrac{29+8\sqrt{2}}{32}\kappa ^3\nu^2(\wtd\eta_p \beta)^2\tau_{s-1}+4C_T\kappa ^5\nu(\wtd\eta_p\beta)^3+2C_T^2\kappa ^6\nu(\wtd\eta_p\beta)^4,
		\end{align*}
		where $\tau_{s-1}=\N{T^{(s-1)}}_2\le(\kappa ^2-1)^{1/2}$.
		Write $ E_{21}:= \txs\sum\limits_{s=1}^{N_{1/2-3\varepsilon}-1}\opL^{2(n-s)}\txs\E{R_{\circ}}$.
		Since $2N_{1/2-3\varepsilon}-1\le N_{1-6\varepsilon}\le 2N_{1/2-3\varepsilon}$ by definition, we get
		\begin{align*}
		\N{E_{21}}_2
		&\le \txs\sum\limits_{s=1}^{N_{1/2-3\varepsilon}-1}\N{\opL}_2^{2(n-s)}\txs\E{\N{R_{\circ}}_2}
		\\ &\le \tfrac{(1-\beta\gamma)^{2(n+1-N_{1/2-3\varepsilon})}}{\beta\gamma[2-\beta\gamma]}\txs\E{\N{R_{\circ}}_2}
		\\ &\le \tfrac{1-\beta\gamma}{2-\beta\gamma}\tfrac{(1-\beta\gamma)^n}{\beta\gamma}\txs\E{\N{R_{\circ}}_2}
		\\ &\le \tfrac{1}{2}\beta^{1-6\varepsilon}\gamma^{-1}\beta\kappa^4\nu\wtd\eta_p^2\left(\tfrac{29+8\sqrt{2}}{32}\nu+4C_T\kappa(\wtd\eta_p\beta)+2C_T^2\kappa ^2(\wtd\eta_p\beta)^2\right)
		\\ &\le \left(\tfrac{29+8\sqrt{2}}{64}+2C_T\kappa(\wtd\eta_p\beta)+C_T^2\kappa ^2(\wtd\eta_p\beta)^2\right)\gamma^{-1}\kappa^4\nu^2\wtd\eta_p^2\beta^{2-6\varepsilon}
		.
		\end{align*}
		\item %For $\txs\E{\wtd J_{22}\circ \wtd J_{22}}$, we have
			$
			\begin{aligned}[t]
				\txs\E{\wtd J_{22}\circ \wtd J_{22}}&=\txs\sum\limits_{s=N_{1/2-3\varepsilon}}^{n}\opL^{2(n-s)} \txs\E{D^{(s)}\ind{\bbQ_K}\ind{\bbT_{s-1}}\circ D^{(s)}\ind{\bbQ_K}\ind{\bbT_{s-1}}}
				\\ &=\beta^2\txs\sum\limits_{s=N_{1/2-3\varepsilon}}^{n}\opL^{2(n-s)}H_{\circ} + \txs\sum\limits_{s=N_{1/2-3\varepsilon}}^{n}\opL^{2(n-s)}\txs\E{R_{\circ}\ind{\bbT_{s-1}}},
			\end{aligned}
			 $\\
		because for $s\ne s'$,
		\begin{align*}
%		\MoveEqLeft[1]
&\txs\E{D^{(s)}\ind{\bbQ_K}\ind{\bbT_{s-1}}\circ D^{(s')}\ind{\bbQ_K}\ind{\bbT_{s'-1}}}
=\txs\E{D^{(s)}\circ D^{(s')}\ind{\bbQ_K}\ind{\bbT_{s-1}}\ind{\bbT_{s'-1}}} \\
		&=\txs\E{D^{(s)}\circ D^{(s')}\ind{\bbQ_K}\given\bbT_{s-1}\cap\bbT_{s'-1}}\txs\prob{\bbT_{s-1}\cap\bbT_{s'-1}} \\
		&=\txs\E{\txs\E{D^{(\max\txs\set{s,s'})}\ind{\bbQ_K}\given\fil_{\max\txs\set{s,s'}-1}}
			\circ D^{(\min\txs\set{s,s'})}\given\bbT_{s-1}\cap\bbT_{s'-1}} \prob{\bbT_{s-1}\cap\bbT_{s'-1}} \\
		&=0,
		\end{align*}
		and
		\begin{align*}
		&\txs\E{D^{(s)}\ind{\bbQ_K}\ind{\bbT_{s-1}}\circ D^{(s)}\ind{\bbQ_K}\ind{\bbT_{s-1}}}
		 =\txs\E{D^{(s)}\circ D^{(s')}\ind{\bbQ_K}\ind{\bbT_{s-1}}}
		\\ &\quad=\txs\prob{\bbT_{s-1}}\txs\E{\txs\E{D^{(s)}\circ D^{(s)}\ind{\bbQ_K}\given\fil_{s-1}}\given\bbT_{s-1}}
		\\ &\quad\le\beta^2H_{\circ}+\txs\E{R_{\circ}\ind{\bbT_{s-1}}}.
		\end{align*}
		We have
		\begin{align*}
		\N{R_{\circ}\ind{\bbT_{s-1}}}_2
		&\le \tfrac{29+8\sqrt{2}}{32}\kappa ^3\nu^2(\wtd\eta_p \beta)^2\tau_{s-1}+4C_T\kappa ^5\nu(\wtd\eta_p\beta)^3+2C_T^2\kappa ^6\nu(\wtd\eta_p\beta)^4
		\\ &\le \tfrac{29+8\sqrt{2}}{32}\kappa ^3\nu^2(\wtd\eta_p \beta)^2(\kappa ^2-1)^{1/2}\beta^{1/2-3\varepsilon}+4C_T\kappa ^5\nu(\wtd\eta_p\beta)^3+2C_T^2\kappa ^6\nu(\wtd\eta_p\beta)^4
		\\ &\le \tfrac{29+8\sqrt{2}}{32}\kappa ^4\nu^2(\wtd\eta_p \beta)^2\beta^{1/2-3\varepsilon}+4C_T\kappa ^5\nu(\wtd\eta_p\beta)^3+2C_T^2\kappa ^6\nu(\wtd\eta_p\beta)^4
		.
		\end{align*}
		Write
		$ E_{22}:= \txs\sum\nolimits_{s=N_{1/2-3\varepsilon}}^{n}\opL^{2(n-s)}\txs\E{R_{\circ}\ind{\bbT_{s-1}}} $ for which we have
		\begin{align*}
		\N{E_{22}}_2
		&\le \txs\sum\limits_{s=N_{1/2-3\varepsilon}}^{n}\N{\opL}_2^{2(n-s)}\txs\E{\N{R_{\circ}\ind{\bbT_{s-1}}}_2}
		\\ &\le \tfrac{1}{\beta\gamma[2-\beta\gamma]}\txs\E{\N{R_{\circ}\ind{\bbT_{s-1}}}_2}
		\\ &\le  \tfrac{1}{3-\sqrt{2}}\gamma^{-1}\kappa^4\nu\wtd\eta_p^2\beta\left( \tfrac{29+8\sqrt{2}}{32}\nu\beta^{1/2-3\varepsilon}+4C_T\kappa(\wtd\eta_p\beta)+2C_T^2\kappa ^2(\wtd\eta_p\beta)^2  \right)
		\\ &\le  \tfrac{1}{3-\sqrt{2}}\left( \tfrac{29+8\sqrt{2}}{32}+4C_T\kappa\wtd\eta_p\beta^{1/2+3\varepsilon}+2C_T^2\kappa ^2\wtd\eta_p^2\beta^{3/2+3\varepsilon}  \right)\gamma^{-1}\kappa^4\nu^2\wtd\eta_p^2\beta^{3/2-3\varepsilon}
		.
		\end{align*}
		\item
		$\txs\E{\wtd J_3\circ \wtd J_3}=\txs\sum\limits_{s=1}^{n} \opL^{2(n-s)}\txs\E{E_T^{(s-1)}\ind{\bbQ_K}\circ E_T^{(s-1)}\ind{\bbQ_K}}$.
		Also, by \cref{eq:schur-ineq-UI},
		\begin{align*}
		\N{\txs\E{\wtd J_3\circ \wtd J_3}}_2
		&\le \txs\sum\limits_{s=1}^{n} \N{\opL}_2^{2(n-s)}\txs\E{\N{E_T^{(s-1)}\ind{\bbQ_K}}_2^2}
		\\ &\le \txs\sum\limits_{s=1}^{n} (1-\beta\gamma)^{2(n-s)} [C_T \nu^{1/2}(\wtd\eta_p \beta)^2\kappa ^3]^2
		\\ &\le \frac{ C_T^2\nu(\wtd\eta_p \beta)^4\kappa ^6}{\beta\gamma[2-\beta\gamma]}
		%\\ &
		\le  \frac{1}{3-\sqrt{2}}C_T^2\nu\wtd\eta_p^4\gamma^{-1}\kappa ^6\beta^3
		.
		\end{align*}
	\end{enumerate}

\subsection{Proof of Lemma \ref{lm:2F1}}\label{ssec:proof-of-lm}
%\begin{proof}[Proof of Lemma~\ref{lm:2F1}]
	The proof is the same as that for the case $p=1$ by Kummer's solutions of the hypergeometric differential equation (see, e.g., \cite[Section~3.8]{luke1969special}).
	Let the eigenvalues of $T$ be $\mu_1,\dots,\mu_m$.
	%\marginpartiny{ check the text in red.\answer actually $\hyperg(\cdot)$ is defined on the spectra}
	Since $\hyperg(a,b;c;T)$ is defined on the spectrum of $T$, it is a function of $\mu_1,\dots,\mu_m$.
	%\marginpartiny{\tiny $\hyperg(a,b;c,T)$ }
	When treated as such, by \cite[Theorem~7.5.5]{muirhead1982aspects}, $\hyperg(a,b;c;T)$ is the unique solution of partial differential equations,
	%\begin{equation}\label{eq:hyperg-PDE}
	\begin{multline}\label{eq:hyperg-PDE}
	\mu_i(1-\mu_i)\tfrac{\partial^2 F}{\partial \mu_i^2}
	+\left( c-\tfrac{m-1}{2} -(a+b+1-\tfrac{m-1}{2})\mu_i+\tfrac{1}{2} \txs\sum\limits_{1\le j\le m}^{j\ne i} \tfrac{\mu_i(1-\mu_i)}{\mu_i-\mu_j} \right)\tfrac{\partial F}{\partial \mu_i}
	\\-\tfrac{1}{2} \txs\sum\limits_{1\le j\le m}^{j\ne i} \tfrac{\mu_j(1-\mu_j)}{\mu_i-\mu_j}\tfrac{\partial F}{\partial \mu_j}
	-abF=0,
	\end{multline}
	%\end{equation}
	subject to the conditions that $F$ is a symmetric function of $\mu_1,\dots,\mu_m$,   analytic at $(\mu_1,\dots,\mu_m)=(0,\ldots,0)$,
	and $F(0,\ldots,0)=1$.
	
	We claim that $\wtd F(\mu_1,\dots,\mu_m):=\hyperg(a,b;a+b-c+\tfrac{m+1}{2};I-T)$ satisfies \cref{eq:hyperg-PDE}.
	In fact, letting $\wtd \mu_i=1-\mu_i$ for $1\le i\le m$ which are the eigenvalues of $I-T$, we have
	\begin{align*}
	&
	\mu_i(1-\mu_i)\tfrac{\partial^2 \wtd F}{\partial \mu_i^2}
	+\left( c-\tfrac{m-1}{2} -(a+b+1-\tfrac{m-1}{2})\mu_i+\tfrac{1}{2} \txs\sum\limits_{1\le j\le m}^{j\ne i} \tfrac{\mu_i(1-\mu_i)}{\mu_i-\mu_j} \right)\tfrac{\partial \wtd F}{\partial \mu_i}
	-\tfrac{1}{2} \txs\sum\limits_{1\le j\le m}^{j\ne i} \tfrac{\mu_j(1-\mu_j)}{\mu_i-\mu_j}\tfrac{\partial \wtd F}{\partial \mu_j}
	-ab\wtd F
	\\ &=
	\begin{multlined}[t]
	(1-\wtd\mu_i)\wtd\mu_i\tfrac{\partial^2 \wtd F}{\partial \wtd\mu_i^2}
	+\tfrac{1}{2} \txs\sum\limits_{1\le j\le m}^{j\ne i} \tfrac{(1-\wtd\mu_j)\wtd\mu_j}{(1-\wtd\mu_i)-(1-\wtd\mu_j)}\tfrac{\partial \wtd F}{\partial \wtd\mu_j}
	-ab\wtd F
	\\-\left( c-\tfrac{m-1}{2} -(a+b+1-\tfrac{m-1}{2})(1-\wtd\mu_i)+\tfrac{1}{2} \txs\sum\limits_{1\le j\le m}^{j\ne i} \tfrac{(1-\wtd\mu_i)\wtd\mu_i}{(1-\wtd\mu_i)-(1-\wtd\mu_j)} \right)\tfrac{\partial \wtd F}{\partial \wtd\mu_i}
	\end{multlined}
	\\ &=
	\begin{multlined}[t]
	(1-\wtd\mu_i)\wtd\mu_i\tfrac{\partial^2 \wtd F}{\partial \wtd\mu_i^2}
	-\tfrac{1}{2} \txs\sum\limits_{1\le j\le m}^{j\ne i} \tfrac{(1-\wtd\mu_j)\wtd\mu_j}{\wtd\mu_i-\wtd\mu_j}\tfrac{\partial \wtd F}{\partial \wtd\mu_j}
	-ab\wtd F
	\\+\left( -c+\tfrac{m+1}{2} +a+b-\tfrac{m-1}{2}-(a+b+1-\tfrac{m-1}{2})\wtd\mu_i+\tfrac{1}{2} \txs\sum\limits_{1\le j\le m}^{j\ne i} \tfrac{(1-\wtd\mu_i)\wtd\mu_i}{\wtd\mu_i-\wtd\mu_j} \right)\tfrac{\partial \wtd F}{\partial \wtd\mu_i}
	\end{multlined}
	\\ &=0,
	\end{align*}
	where the last equality holds because $\wtd F(\mu_1,\dots,\mu_m)=\hyperg(a,b;a+b-c+\tfrac{m+1}{2};I-T)$ satisfies
	a version of \cref{eq:hyperg-PDE} after substitutions: $\mu_i\to\wtd\mu_i$ for all $i$ and
	$c\to a+b-c+\tfrac{m+1}{2}$.
	
	$\what F(\mu_1,\dots,\mu_m):=\det(T)^{\tfrac{m+1}{2}-c}\hyperg(a-c+\tfrac{m+1}{2},b-c+\tfrac{m+1}{2};m+1-c;T)$
	satisfies \cref{eq:hyperg-PDE}, too. Set
	%\marginpartiny{change all $\hyperg(\cdot)$ to $G$.}
	%\marginpartiny{using $d$ is not good, as it is the dimension of $\bX$ in the entire article. How about $t$?\answer changed}
	$t=\tfrac{m+1}{2}-c$ and write
	%$\hyperg(\cdot)%{\color{red}=\hyperg(\mu_1,\dots,\mu_m)}
	$G(\mu_1,\dots,\mu_m)=\hyperg(a+t,b+t;c+2t;T)$. We have
	%\marginpartiny{ change all $\hyperg(\cdot)$ to $\hyperg$ in the calculations?
	%\answer it is not fine to use $\hyperg(\mu_i)$, for $\hyperg(a,b,c;T)$ is a function on $a,b,c,T$}
	\begin{align*}
	\frac{\partial \what F}{\partial \mu_i}
	&=\frac{t}{\mu_i}\det(T)^t G+\det(T)^t\frac{\partial G}{\partial \mu_i},
	\\
	\frac{\partial^2 \what F}{\partial \mu_i^2}
	&=\frac{t(t-1)}{\mu_i^2}\det(T)^t G+2\frac{t}{\mu_i}\det(T)^t\frac{\partial G}{\partial \mu_i}
	+\det(T)^t\frac{\partial^2 G}{\partial \mu_i^2},
	\end{align*}
	and thus
	%\marginpartiny{ incredible messy calculations!}
	\begin{align*}
	&\mu_i(1-\mu_i)\tfrac{\partial^2 \what F}{\partial \mu_i^2}
	    +\left( c-\tfrac{m-1}{2} -(a+b+1-\tfrac{m-1}{2})\mu_i+\tfrac{1}{2} \txs\sum\limits_{1\le j\le m}^{j\ne i}
         \tfrac{\mu_i(1-\mu_i)}{\mu_i-\mu_j} \right)\tfrac{\partial \what F}{\partial \mu_i} 
	-\tfrac{1}{2} \txs\sum\limits_{1\le j\le m}^{j\ne i} \tfrac{\mu_j(1-\mu_j)}{\mu_i-\mu_j}
                    \tfrac{\partial \what F}{\partial \mu_j}-ab\what F   \\
	&=\mu_i(1-\mu_i)\left(\tfrac{t(t-1)}{\mu_i^2}\det(T)^t G+2\tfrac{t}{\mu_i}\det(T)^t\tfrac{\partial G}{\partial \mu_i}
	    +\det(T)^t\tfrac{\partial^2 G}{\partial \mu_i^2}\right)  \\
	&\quad+\left( c-\tfrac{m-1}{2} -(a+b+1-\tfrac{m-1}{2})\mu_i+\tfrac{1}{2} \txs\sum\limits_{1\le j\le m}^{j\ne i} \tfrac{\mu_i(1-\mu_i)}{\mu_i-\mu_j} \right) 
    \left( \tfrac{t}{\mu_i}\det(T)^t G+\det(T)^t\tfrac{\partial G}{\partial \mu_i} \right)
	\\ &\quad -\tfrac{1}{2} \txs\sum\limits_{1\le j\le m}^{j\ne i} \tfrac{\mu_j(1-\mu_j)}{\mu_i-\mu_j}\left( \tfrac{t}{\mu_i}\det(T)^t G+\det(T)^t\tfrac{\partial G}{\partial \mu_i} \right)
	-ab\det(T)^t G \\
 &=\det(T)^t\bigg\{
	\mu_i(1-\mu_i)\tfrac{\partial^2 G}{\partial \mu_i^2}-\tfrac{1}{2} \txs\sum\limits_{1\le j\le m}^{j\ne i} \tfrac{\mu_j(1-\mu_j)}{\mu_i-\mu_j}\tfrac{\partial G}{\partial \mu_j}
	\\ &\quad +\left(2\mu_i(1-\mu_i)\tfrac{t}{\mu_i}+ c-\tfrac{m-1}{2} -(a+b+1-\tfrac{m-1}{2})\mu_i+\tfrac{1}{2} \txs\sum\limits_{1\le j\le m}^{j\ne i} \tfrac{\mu_i(1-\mu_i)}{\mu_i-\mu_j} \right)\tfrac{\partial G}{\partial \mu_i}
	\\ &\quad+\bigg[\mu_i(1-\mu_i)\tfrac{t(t-1)}{\mu_i^2}+\left( c-\tfrac{m-1}{2} -(a+b+1-\tfrac{m-1}{2})\mu_i+\tfrac{1}{2} \txs\sum\limits_{1\le j\le m}^{j\ne i} \tfrac{\mu_i(1-\mu_i)}{\mu_i-\mu_j} \right)\tfrac{t}{\mu_i}
	-\tfrac{1}{2} \txs\sum\limits_{1\le j\le m}^{j\ne i} \tfrac{\mu_j(1-\mu_j)}{\mu_i-\mu_j}\tfrac{t}{\mu_j}-ab\bigg]G
	\bigg\}	\\
&=\det(T)^t\bigg\{
	\mu_i(1-\mu_i)\tfrac{\partial^2 G}{\partial \mu_i^2}-\tfrac{1}{2} \txs\sum\limits_{1\le j\le m}^{j\ne i} \tfrac{\mu_j(1-\mu_j)}{\mu_i-\mu_j}\tfrac{\partial G}{\partial \mu_j}
	\\ &\quad+\left(2(1-\mu_i)t+ c-\tfrac{m-1}{2} -(a+b+1-\tfrac{m-1}{2})\mu_i+\tfrac{1}{2} \txs\sum\limits_{1\le j\le m}^{j\ne i} \tfrac{\mu_i(1-\mu_i)}{\mu_i-\mu_j} \right)\tfrac{\partial G}{\partial \mu_i}
	\\ &\quad+\bigg[\tfrac{t(t-1)}{\mu_i}-t(t-1)+(c-\tfrac{m-1}{2})\tfrac{t}{\mu_i} -(a+b+1-\tfrac{m-1}{2})t 
       +\tfrac{1}{2} \txs\sum\limits_{1\le j\le m}^{j\ne i}(-1)t-ab\bigg]G
	\bigg\}	\\
&=
	\det(T)^t\bigg\{
	\mu_i(1-\mu_i)\tfrac{\partial^2 G}{\partial \mu_i^2}-\tfrac{1}{2} \txs\sum\limits_{1\le j\le m}^{j\ne i} \tfrac{\mu_j(1-\mu_j)}{\mu_i-\mu_j}\tfrac{\partial G}{\partial \mu_j}
	\\&\quad+\left(2t+ c-\tfrac{m-1}{2} -(2t+a+b+1-\tfrac{m-1}{2})\mu_i+\tfrac{1}{2} \txs\sum\limits_{1\le j\le m}^{j\ne i} \tfrac{\mu_i(1-\mu_i)}{\mu_i-\mu_j} \right)\tfrac{\partial G}{\partial \mu_i}
	-\left[t^2+(a+b)t+ab\right]G
	\bigg\}
	\\ &=0,
	\end{align*}
	where the last equality holds because $G(\mu_1,\dots,\mu_m)=\hyperg(a+t,b+t;c+2t;T)$ satisfies
	a version of \cref{eq:hyperg-PDE} after substitutions: $a\to a+t$, $b\to b+t$, and $c\to c+2t$.
	
	Similarly $\what{\wtd F}(\mu_1,\dots,\mu_m):= \det(I-T)^{c-a-b}\hyperg(c-b,c-a;c-a-b+\tfrac{m+1}{2};I-T)$ satisfies \cref{eq:hyperg-PDE}.
	Thus, any linear combination of $\wtd F$ and $\what{\wtd F}$, such as the right-hand side of \cref{eq:hyperg-reflect}, also satisfies \cref{eq:hyperg-PDE}.
	%\marginpartiny{You proved $\wtd F$, $\what F$, and $\what{\wtd F}$ all satisfy \cref{eq:hyperg-PDE}. Why a linear combination of just $\wtd F$ and $\what{\wtd F}$,
	%               not all three of them?\answer because we only need two terms of $I-T$ which change the pole $T$ to a regular point in order to make analytic continuation}
	%\answer see the beginning of the proof: \cref{eq:hyperg-PDE} has a unique solution subject to the conditions,
	%so the coefficients can be determined by the conditions.}
	It can be verified that the combination is symmetric with respect to $\mu_1,\dots,\mu_m$, and analytic at $T=0$.
	Therefore, by the uniqueness and $F(0)=1$, similarly to the discussion in \cite[Section~3.9]{luke1969special}, we have \cref{eq:hyperg-reflect}.
%\end{proof}

\subsection{Complementary Calculation in Proof of Lemma \ref{lm:lm1:ppr1}}\label{ssec:complementary-calculation-in-proof-of-lm}
	Here in defining $g_d$, although $\Gamma_p(-\tfrac{p}{2})$ and $\Gamma_p(\tfrac{1}{2})$ may be  $\infty$,
	by analytic continuation, \linebreak $\Gamma_p(-\tfrac{p}{2})/\Gamma_p(\tfrac{1}{2})$ is well-defined because
	\begin{align*}
	\tfrac{\Gamma_p(-\tfrac{p}{2}+\epsilon)}{\Gamma_p(\tfrac{1}{2}+\epsilon)}
	&=\txs\prod_{i=1}^p\tfrac{\Gamma(-\tfrac{p}{2}-\tfrac{i-1}{2}+\epsilon)}{\Gamma(\tfrac{1}{2}-\tfrac{i-1}{2}+\epsilon)}
	\\ &=
	\begin{dcases*}
	\txs\prod_{i=1}^{p} \txs\prod_{j=1}^{(p-1)/2} \tfrac{1}{\tfrac{-i}{2}-j+1+\epsilon}
	& for odd $p$, \\
	\tfrac{\Gamma(\tfrac{1-2p}{2}+\epsilon)}{\Gamma(\tfrac{1}{2}+\epsilon)}\txs\prod_{i=1}^{p-1} \txs\prod_{j=1}^{p/2} \tfrac{1}{\tfrac{-i-1}{2}-j+1+\epsilon}
	& for even $p$, \\
	\end{dcases*}
	\\ & \xrightarrow{\epsilon\to0}
	\left.%%%SCHRINK%%%
	\begin{dcases*}
	\txs\prod_{i=1}^{p} \txs\prod_{j=1}^{(p-1)/2} \tfrac{-2}{i+2j-2},
	\\%& $p$ odd, \\
	\txs\prod_{k=1}^{p}\tfrac{1}{\tfrac{-1}{2}-k+1}
	\txs\prod_{i=1}^{p-1} \txs\prod_{j=1}^{p/2} \tfrac{-2}{i+2j-1}
	\\%& $p$ even, \\
	\end{dcases*}
	%\\=
	%\begin{dcases*}
	%\txs\prod_{i=1}^{p} \txs\prod_{j=1}^{(p-1)/2} \left( 1-\tfrac{d}{i+2j-2} \right)
	%\\ %& $p$ odd, \\
	%\txs\prod_{k=1}^{p}\left( 1-\tfrac{d}{2k-1} \right)
	%\txs\prod_{i=1}^{p-1} \txs\prod_{j=1}^{p/2} \left( 1-\tfrac{d}{i+2j-1} \right)
	%\\ %& $p$ even, \\
	%\end{dcases*}
	%\\ &=\left.
	%\begin{dcases*}
	%\txs\prod_{i=1}^{p} \txs\prod_{j=1}^{(p-1)/2} \tfrac{-2}{i+2j-2}
	%\\%& $p$ odd, \\
	%\txs\prod_{i=1}^{p+1} \txs\prod_{j=1}^{p/2} \tfrac{-2}{i+2j-2}
	%\\%& $p$ even, \\
	%\end{dcases*}
\right\}
	= \txs\prod_{i=1}^{2\floor{p/2}+1} \txs\prod_{j=1}^{\floor{p/2}} \tfrac{-2}{i+2j-2}
	.
	\end{align*}
	Also,
	\[
	\tfrac{\Gamma_p(\tfrac{p}{2})}{\Gamma_p(\tfrac{p+1}{2})}
	=\txs\prod_{i=1}^p\tfrac{\Gamma(\tfrac{p}{2}-\tfrac{i-1}{2})}{\Gamma(\tfrac{p+1}{2}-\tfrac{i-1}{2})}
	= \tfrac{\Gamma(\tfrac{1}{2})}{\Gamma(\tfrac{p+1}{2})}
	,\quad
	\tfrac{\Gamma_p(\tfrac{d}{2})}{\Gamma_p(\tfrac{d+1}{2})}
	=\txs\prod_{i=1}^p\tfrac{\Gamma(\tfrac{d}{2}-\tfrac{i-1}{2})}{\Gamma(\tfrac{d+1}{2}-\tfrac{i-1}{2})}
	= \tfrac{\Gamma(\tfrac{d-p+1}{2})}{\Gamma(\tfrac{d+1}{2})}
	,
	\]
	which implies
	$ f_d=\tfrac{\Gamma(\tfrac{1}{2})\Gamma(\tfrac{d+1}{2})}{\Gamma(\tfrac{p+1}{2})\Gamma(\tfrac{d-p+1}{2})}$.
	%\marginpartiny{ check\answer details filled}
	We have
	\[
	f_d^{-1}g_d
	= \tfrac{\Gamma_p(\tfrac{p+1}{2})\Gamma_p(\tfrac{d}{2})\Gamma_p(-\tfrac{p}{2})}{\Gamma_p(\tfrac{p}{2})\Gamma_p(\tfrac{d-p}{2})\Gamma_p(\tfrac{1}{2})}
	= \tfrac{\Gamma(\tfrac{p+1}{2})\Gamma_p(\tfrac{d}{2})\Gamma_p(-\tfrac{p}{2})}{\Gamma(\tfrac{1}{2})\Gamma_p(\tfrac{d-p}{2})\Gamma_p(\tfrac{1}{2})}
	.
	\]
	Note that
	\begin{align*}
	\tfrac{\Gamma_p(\tfrac{d}{2})}{\Gamma_p(\tfrac{d-p}{2})}
	=\txs\prod_{i=1}^p\tfrac{\Gamma(\tfrac{d}{2}-\tfrac{i-1}{2})}{\Gamma(\tfrac{d-p}{2}-\tfrac{i-1}{2})}
	=
	\begin{dcases*}
	\tfrac{\Gamma(\tfrac{d}{2})}{\Gamma(\tfrac{d-p}{2})}\txs\prod_{i=1}^{p} \txs\prod_{j=1}^{(p-1)/2} \left( \tfrac{d-i}{2}-j \right)
	& for odd $p$, \\
	\txs\prod_{i=1}^{p} \txs\prod_{j=1}^{p/2} \left( \tfrac{d-i}{2}-j \right)
	& for even $p$, \\
	\end{dcases*}
	\end{align*}
	and by $\lim_{n\to\infty}\tfrac{\Gamma(n+\alpha)}{\Gamma(n)n^\alpha}=1$ for any $\alpha$ (see, e.g., \cite[(16) of section~2.1]{luke1969special}),
	\[
	\tfrac{\Gamma(\tfrac{d}{2})}{\Gamma(\tfrac{d-p}{2})}
	=
	\begin{dcases*}
	\tfrac{\Gamma(\tfrac{d-1}{2})(\tfrac{d-1}{2})^{1/2}[1+\oo(1)]}{\Gamma(\tfrac{d-1}{2})(\tfrac{d-1}{2})^{(1-p)/2}[1+\oo(1)]},
	& for odd $d$, \\
	\tfrac{\Gamma(\tfrac{d}{2})}{\Gamma(\tfrac{d}{2})(\tfrac{d}{2})^{-p/2}[1+\oo(1)]},
	& for even $d$ \\
	\end{dcases*}
	=
	\begin{dcases*}
	\Big(\tfrac{d-1}{2}\Big)^{p/2}[1+\oo(1)]
	,\\%& $d$ odd, \\
	\Big(\tfrac{d}{2}\Big)^{p/2}[1+\oo(1)]
	\end{dcases*}
	\]
	which implies
	\[
	\tfrac{\Gamma_p(\tfrac{d}{2})}{\Gamma_p(\tfrac{d-p}{2})}
	=\left( \tfrac{d}{2} \right)^{p^2/2}[1+\oo(1)]
	\,\,\text{as $d\to\infty$}.
	\]

\end{appendix}

%\bibliographystyle{scichinamath}
%\bibliography{strings,liang-nctu}

%%%%%%%%%%%%%%%%%%%%%%%%%%%%%%%%%%%%%%%%%%%%%%%%%%%%%%%
%%% Appendix sections. 附录章节, 非必选
%%%%%%%%%%%%%%%%%%%%%%%%%%%%%%%%%%%%%%%%%%%%%%%%%%%%%%%
\end{document}